\documentclass[11pt,a4paper,reqno]{amsart}
\usepackage{amsthm,amsmath,amsfonts,amssymb,amsxtra,appendix,bookmark,dsfont,latexsym,bm,hyperref,delarray,color,euscript,amsgen,amsbsy,amsopn,amscd,latexsym,mathrsfs}
\usepackage{enumitem}

\makeatletter

\allowdisplaybreaks

\newtheorem{theorem}{Theorem}[section]
\newtheorem{lemma}[theorem]{Lemma}
\newtheorem{proposition}[theorem]{Proposition}

\theoremstyle{definition}
\newtheorem{definition}{Definition}

\theoremstyle{remark}
\newtheorem{remark}[theorem]{Remark}

\newcommand\R{{\ensuremath {\mathbb R} }}
\newcommand\C{{\ensuremath {\mathbb C} }}
\newcommand\N{{\ensuremath {\mathbb N} }}

\newcommand\1{{\ensuremath {\mathds 1} }}

\newcommand\nn{\nonumber}

\newcommand{\bS}{\mathbb{S}}

\newcommand{\wto}{\rightharpoonup}
\newcommand{\cS}{\mathcal{S}}

\newcommand{\cQ}{\mathcal{Q}}

\newcommand{\sC}{\mathscr{C}}

\newcommand{\cA}{\mathcal{A}}

\newcommand{\cW}{\mathcal{W}}

\newcommand{\cD}{\mathcal{D}}

\newcommand{\cL}{\mathcal{L}}

\newcommand{\cG}{\mathcal{G}}

\numberwithin{equation}{section}

\begin{document}

\title[NLS equations with critical Hardy potential and Choquard nonlinearity]{Nonlinear Schr\"odinger equations with critical Hardy potential and Choquard nonlinearity}

\author[P.-T. Nguyen]{Phuoc-Tai Nguyen}
\address{Department of Mathematics and Statistics, Masaryk University, Kotl\'a\v rsk\'a 2, 61137 Brno, Czech Republic}
\email{ptnguyen@math.muni.cz}

\author[T. D. Tran]{Tuan Dat Tran}
\address{Department of Mathematics and Statistics, Masaryk University, Kotl\'a\v rsk\'a 2, 61137 Brno, Czech Republic}
\email{554671@mail.muni.cz}

\begin{abstract} We study the Cauchy problem for the nonlinear Schr\"odinger equation characterized by contrasting effects between the concentration at the origin of a critical Hardy potential and the intrinsic nonlocality of a Choquard nonlinearity. We prove the existence of a ground state solution through optimizers of an interpolation Hardy-Gagliardo-Nirenberg inequality and derive a non-existence result via Poho\v zaev identities. Using these results, we provide various criteria for the global existence and finite-time blow-up for the problem in the energy-subcritical regime. Finally, we establish a key compactness result, which enables us to obtain a characterization of finite-time blow-up solutions with minimal mass.

\medskip
	
\noindent \textit{MSC: 35Q55, 35B44, 35A02, 35A15, 35Q40} \medskip

\noindent \textit{Keywords:Nonlinear Schr\"odinger equation, Hardy potential, Hardy-Gagliardo-Nirenberg inequality, ground state solutions, Poho\v zaev identities} 
\end{abstract}
\maketitle

\tableofcontents
\section{Introduction}

In this paper, we study the Cauchy problem for the nonlinear Schr\"odinger (NLS) equation with an inverse-square potential and a focusing Choquard nonlinearity of the form
\begin{equation} \label{eq:NLS} 
	\left\{  \begin{aligned}
		i \partial_t u &= \cL_{\mu_0} u - (I_\alpha * |u|^p)|u|^{p-2}u, \quad t>0,\; x\in \R^d, \\
		u(0,x) &=u_0(x), \quad x\in \R^d.
	\end{aligned} \right. 
\end{equation}
Here 
$$ d \geq 3, \quad \mu_0 = \frac{(d-2)^2}{4}, \quad \cL_{\mu_0} = -\Delta - \frac{\mu_0}{|x|^2}, \quad 0<\alpha<d, \quad \frac{d+\alpha}{d} < p < \frac{d+\alpha}{d-2},
$$
 $u: \R_+ \times \R^d \to \C$, $u_0: \R^d \to \C$,  and $I_\alpha$ denotes the Riesz potential defined by
$$ I_\alpha(x) = \frac{{\Gamma \left( {\frac{{d - \alpha }}{2}} \right)}}{{\Gamma \left( {\frac{\alpha }{2}} \right){\pi ^{\frac{d}{2}}}{2^\alpha }}}|x|^{-(d-\alpha)}, \quad x \neq 0,
$$
with $\Gamma$ being the Gamma function.

Nonlinear Schr\"odinger (NLS) equations without potential (i.e. $\mu=0$) and with a Choquard nonlinearity have been extensively studied due to their interest in many fields. When $p=2$, the equation in \eqref{eq:NLS} reduces to the well-known Schr\"odinger-Hartree equation.  The typical model in the physical case ($d=3$ and $p=\alpha=2$) was first proposed by Pekar in the context of quantum mechanics (see \cite{LewRou-13} for references). Such equations also play an important role in the study of Hartree-Fock theory (see \cite{Lio-80} and related works \cite{BGGM-03,EESY-04}). The wellposedness and dynamics of solutions to NLS equations with Choquard nonlinearities have been well understood; see the papers \cite{MXZ-07,FenYua-15} (and references therein) and the excellent textbook \cite{Caz-03}.

In the last decade, there has been a fast growing interest in NLS equations in $\R^d$ ($d \geq 3$) governed by a Schr\"odinger operator involving an inverse-square potential of the form
$$ \cL_\mu := -\Delta - \mu |x|^{-2}, \quad \mu \in \R.
$$  
The singular term $\mu |x|^{-2}$ represents a borderline potential and cannot be treated as a lower-order perturbation of $-\Delta$, thereby significantly affecting the solvability and dynamics of the associated Cauchy problem. Moreover, the potential $\mu |x|^{-2}$ is also connected to the Hardy inequality
\begin{equation} \label{hardy-ineq} \cL_{\mu_0} \geq 0 \quad \text{in } L^2(\R^d) \quad \text{with }  \mu_0 = \frac{(d-2)^2}{4},
\end{equation}
hence it is referred to as \textit{subcritical (resp. critical) Hardy potential} if $\mu<\mu_0$ (resp. $\mu=\mu_0$). Thanks to the Hardy inequality \eqref{hardy-ineq}, by the Friedrichs method, for any $\mu \leq \mu_0$,  $\cL_{\mu}$ can be extended to non-negative self-adjoint operator on $L^2(\R^d)$. The quadratic form domain $\cQ_{\mu}$ of $\cL_{\mu}$ is a Hilbert space with the norm
\begin{equation} \label{def:Q-norm}
	\|u\|_{\cQ_{\mu}} :=\left( \|\sqrt{\cL_{\mu}}u\|_{L^2}^2  + \|u\|^2_{L^2}  \right)^{\frac{1}{2}}, 
\end{equation}
where
\begin{equation} \label{sqrtL} \|\sqrt{\cL_{\mu}}u\|_{L^2}^2 = \int_{\R^d} \left(|\nabla u|^2 - \frac{\mu}{|x|^2}|u|^2\right)dx.  
\end{equation}

 
The Cauchy problem for the NLS equation with a focusing Choquard type nonlinearity is of the form
\begin{equation} \label{eq:NLS-mu} 
	\left\{  \begin{aligned}
		i \partial_t u &= \cL_{\mu} u - (I_\alpha * |u|^p)|u|^{p-2}u, \quad t>0,\; x\in \R^d, \\
		u(0,x) &=u_0(x), \quad x\in \R^d.
\end{aligned} \right. \end{equation}
	
A distinctive feature of problem \eqref{eq:NLS-mu} lies in the competition between the Hardy potential and the Choquard nonlinearity. More specifically, the Hardy potential exhibits a strong concentration effect at the origin, which calls for  localization techniques to handle the singularity separably. In contrast, the Choquard nonlinearity is intrinsically nonlocal and heavily depends on the behavior of solutions over the whole space. The intricate interplay between these terms leads to substantial analytical difficulties and significantly complicates or invalidates standard approaches.

The space $\cQ_{\mu}$ is a natural energy space associated to problem \eqref{eq:NLS-mu}. Due to the presence of the Hardy potential, problem \eqref{eq:NLS-mu} and the energy space $\cQ_{\mu}$ are not space-translation invariant.

When $\mu<\mu_0$,  the Hardy inequality \eqref{hardy-ineq} implies that the term $\int_{\R^d}\frac{\mu}{|x|^2}|u|^2dx$ in \eqref{sqrtL} can be controlled by $\int_{\R^d}|\nabla u|^2dx$, hence the norm defined in \eqref{def:Q-norm} is equivalent to the $H^1$-norm. This allows for an adaptation of standard techniques used in $H^1$ setting with an additional treatment of the Hardy potential. Numerous works have been devoted to investigating NLS \eqref{eq:NLS-mu} with subcritical Hardy potential. Among many others, these include \cite{Suz-13,Li-20} for global existence and finite-time blow-up dichotomy, \cite{CheLuLu-19} regarding the blow-up phenomena in the mass critical setting, \cite{GuoTan} for the existence and qualitative properties of solutions to a variant of ground state equations, \cite{Suz-13} for local and global existence via energy method, \cite{Suz-14} for finite-time blow-up criteria.

The case of critical Hardy potential is much more challenging due to the fact that $\cQ_{\mu_0}$ is strictly larger than $H^1(\R^d)$. In fact, the following continuous embeddings (see \cite[Theorem 1.2]{Fra-09} and \cite[Page 16]{Suz-16}) hold
\begin{equation} \label{eq:Q-in-Lp} H^1(\R^d) \varsubsetneq \cQ_{\mu_0}  \varsubsetneq H^s(\R^d) \cap L^q(\R^d), \quad \forall s \in (0,1), \, \forall q \in [2,2^*), \quad \text{with } 2^* = \frac{2d}{d-2}.  
\end{equation} 
In this case, the kinetic and potential parts on the right-hand side of \eqref{sqrtL} should always be tied together and treated as a whole to allow for possible implicit cancellations and to ensure the finiteness of the norm \eqref{sqrtL}. These factors together invalidate classical tools, requiring new or refined techniques to investigate  problem \eqref{eq:NLS-mu}. The dynamics and related results of problem \eqref{eq:NLS} have been understood in the case $p=2$; see \cite{Miz-17}) for local and global solvability, \cite[Remark 4.2]{Suz-16} for Virial identities and \cite[Theorem 4.1 (B3)]{Suz-17} for blow-up criteria. Nevertheless, to the best of our knowledge, the case $p>2$ remains largely unexplored, which is our motivation to study problem \eqref{eq:NLS}in this case. To this end, we will refine various techniques and tools, including optimizers for the Hardy-Galiardo-Nirenberg inequallity, Pohoz\v{a}ev identities for ground states, a compactness result, and adapt the methods in \cite{Wei-83,HolRou-07,KenMer-08,Suz-17,Mer-93,MukNamNgu-2021} to the context of critical Hardy potential. The detailed statement of the main results of the paper will be presented in the next subsection.

\subsection{Main results}
Our first main theorem concerns the ground state equation.

\begin{theorem}  \label{th:groundstate} Assume $0<\alpha<d$ and $p>1$. Consider the ground state equation 
	\begin{equation} \label{eq:EL-u}
		\mathcal{L}_{\mu_0} w + w =  (I_\alpha * |w|^p) |w|^{p-2}w \quad \text{in } \R^d.
	\end{equation}	
	
	\noindent  {\sc 1. Existence and Pohoz\v aev identities.} Assume $\frac{d+\alpha}{d}<p<\frac{d+\alpha}{d-2}$. Then equation \eqref{eq:EL-u} admits a positive radial solution in $\cQ_{\mu_0}$ which is monotonically decreasing in $|x|$. Moreover, any solution $w \in \cQ_{\mu_0}$ to equation \eqref{eq:EL-u} satisfies the Pohoz\v{a}ev identities  
	\begin{align} \label{eq:Pohozaev-1}
		& \| (I_\alpha*|w|^p)	|w|^p \|_{L^1} =  \frac{2p}{d+\alpha- (d-2)p}\|w\|_{L^2}^2, \\ \label{eq:Pohozaev-2}
		&\| \sqrt{\cL_{\mu_0}}w\|_{L^2} = \left(\frac{dp-d-\alpha}{d+\alpha-(d-2)p}\right)^{\frac{1}{2}} \| w\|_{L^2}.
	\end{align}
	
	\noindent {\sc	2. Nonexistence.} If $1<p \leq \frac{d+\alpha}{d}$ or $p \geq \frac{d+\alpha}{d-2}$, then there is no nontrivial solution in $\cQ_{\mu_0} \cap  L^{\frac{2dp}{d+\alpha}}(\R^d) \cap W_{\mathrm{loc}}^{2,q}(\R^d)$ $($with some $q>1)$ to equation \eqref{eq:EL-u}. \smallskip
	
	\noindent {\sc	3. Upper estimates and asymptotic behaviors.}  Assume  $(d-4)_+<\alpha<d$ and  $2<p<\frac{d+\alpha}{d-2}$. If  $w \in \cQ_{\mu_0}$ is a positive solution to equation \eqref{eq:EL-u} then $w \in C^2(\R^d \setminus \{0\})$ and 
	\begin{equation} \label{est:upbound-w}
		w(x) \leq C|x|^{-(d-2)}\1_{B_1}(x) + C|x|^{-\frac{d-1}{2}}e^{-\frac{|x|}{2}}\1_{B_1^c}(x), \quad \forall x \neq 0.	
	\end{equation}
	Moreover, if $w \in \cQ_{\mu_0}$ is a positive radial solution to equation \eqref{eq:EL-u} then
	\begin{equation} \label{asymp-behav} \lim_{|x| \to 0}|x|^{\frac{d-2}{2}}w(x) \in (0,\infty) \quad \text{and} \quad \lim_{|x| \to \infty}|x|^{\frac{d-1}{2}}e^{|x|}w(x) \in (0,\infty).
	\end{equation}
\end{theorem}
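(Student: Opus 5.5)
We obtain existence variationally, from the optimizers of the Hardy--Gagliardo--Nirenberg inequality
\begin{equation*}
\|(I_\alpha*|u|^p)|u|^p\|_{L^1} \le C_{\rm opt}\, \|\sqrt{\cL_{\mu_0}}u\|_{L^2}^{dp-d-\alpha}\, \|u\|_{L^2}^{d+\alpha-(d-2)p}, \qquad u \in \cQ_{\mu_0},
\end{equation*}
whose validity follows from the Hardy--Littlewood--Sobolev inequality combined with the embedding \eqref{eq:Q-in-Lp} applied with $q=\frac{2dp}{d+\alpha}\in(2,2^*)$, together with scaling. We maximize the Weinstein functional $J(u)=\|(I_\alpha*|u|^p)|u|^p\|_{L^1}\big/\bigl(\|\sqrt{\cL_{\mu_0}}u\|_{L^2}^{dp-d-\alpha}\|u\|_{L^2}^{d+\alpha-(d-2)p}\bigr)$.

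\textbf{Existence.} Since $J(|u|)\ge J(u)$ (the diamagnetic inequality holds for the form $\cL_{\mu_0}$) and symmetric decreasing rearrangement does not decrease the Choquard term (Riesz rearrangement inequality) nor increase $\|\sqrt{\cL_{\mu_0}}u\|_{L^2}$ (P\'olya--Szeg\H{o} plus the Hardy--Littlewood rearrangement inequality for $|x|^{-2}$, handled via the ground state representation $u=|x|^{-(d-2)/2}v$), we may restrict to nonnegative radial nonincreasing maximizing sequences. Normalizing $\|u_n\|_{L^2}=\|\sqrt{\cL_{\mu_0}}u_n\|_{L^2}=1$, the sequence is bounded in $\cQ_{\mu_0}$. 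Radial decreasing functions bounded in $L^2$ satisfy $|u_n(x)|\le C|x|^{-d/2}$, while the bound in $L^{q}$ controls the behavior near the origin. Together these give compactness of $u_n\to u$ in $L^{2dp/(d+\alpha)}$ (Strauss-type compactness), so the Choquard term converges. Weak lower semicontinuity of the norms then shows that $u$ is a maximizer, and $u\neq 0$. The Euler--Lagrange equation together with the scaling $w(x)=a\,u(bx)$ yields a solution of \eqref{eq:EL-u}. Positivity follows from the strong maximum principle, or from the transformation $v=|x|^{(d-2)/2}w$ with a Harnack inequality for the weighted operator.

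\textbf{Pohozaev identities.} Test \eqref{eq:EL-u} with $\bar w$ to get $\|\sqrt{\cL_{\mu_0}}w\|^2+\|w\|^2=N$, where $N=\|(I_\alpha*|w|^p)|w|^p\|_{L^1}$. For the second identity we differentiate $\lambda\mapsto E(\lambda^{d/2}w(\lambda\cdot))$ at $\lambda=1$; this dilation preserves the $L^2$ norm. Since $w$ need not be smooth, we justify this as follows: $w$ is a critical point of the action $S(u)=\frac12\|\sqrt{\cL_{\mu_0}}u\|^2+\frac12\|u\|^2-\frac1{2p}N(u)$, and the curve $\lambda\mapsto w_\lambda$ is $C^1$ into $\cQ_{\mu_0}$ when $w\in\cQ_{\mu_0}$ is slightly more regular. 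The dilation $w_\lambda$ with $L^2$ preserved gives $\|\sqrt{\cL_{\mu_0}}w\|^2=\frac{dp-d-\alpha}{2p}N$, because $\sqrt{\cL_{\mu_0}}$ scales exactly like the gradient. Alternatively, one can avoid this regularity issue by using $\frac{d}{d\lambda}S(w_\lambda)|_{\lambda=1}=0$ via difference quotients, which only requires $S$ to be $C^1$ on $\cQ_{\mu_0}$. Solving the two linear relations gives \eqref{eq:Pohozaev-1} and \eqref{eq:Pohozaev-2}.

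\textbf{Nonexistence.} In the stated class we derive the same two relations by the classical multiplier $x\cdot\nabla \bar w$ with cutoffs; the $W^{2,q}_{\rm loc}$ and $L^{2dp/(d+\alpha)}$ assumptions make the boundary terms vanish. This gives $N\,(d+\alpha-(d-2)p)=2p\|w\|^2$ and $N\,(dp-d-\alpha)=2p\|\sqrt{\cL_{\mu_0}}w\|^2$. If $p\le\frac{d+\alpha}{d}$, the left side of the second is $\le0$, forcing $\|\sqrt{\cL_{\mu_0}}w\|=0$, hence $w=0$ since $\cL_{\mu_0}$ has no zero modes in $L^2$. If $p\ge\frac{d+\alpha}{d-2}$, the first forces $\|w\|_{L^2}=0$.

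\textbf{Decay.} For part 3, write $V=I_\alpha*w^p$. The assumption $\alpha>(d-4)_+$ together with $p>2$ gives $V\in L^\infty$ and $V\to0$ at infinity, and $Vw^{p-2}$ is a bounded potential decaying at infinity. Elliptic regularity gives $w\in C^2$ away from $0$. Comparison with the explicit supersolutions $|x|^{-(d-2)/2}$ near $0$ (the Hardy-critical singular profile) and $|x|^{-(d-1)/2}e^{-|x|/2}$ outside $B_1$ (valid once $V w^{p-2}<\frac34$ for large $|x|$) yields \eqref{est:upbound-w}. Note that the statement's $|x|^{-(d-2)}$ is weaker than $|x|^{-(d-2)/2}$, so this suffices. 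For radial $w$ we reduce via $v=|x|^{(d-2)/2}w$ to the ODE $-(r v')'/r+v=Vw^{p-2}v$. Near $0$, since $r^{-1}$-type integrals of the right side converge, $v$ has a finite positive limit and no $\log r$ growth, because $w\in\cQ_{\mu_0}$ excludes the $\log$ branch. At infinity, the ODE asymptotics for $v''-(1-Vw^{p-2}+O(r^{-2}))v=0$ with integrable perturbation give $e^{-r}r^{-1/2}$ behavior, i.e. \eqref{asymp-behav}.

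\textbf{Main obstacle.} The main obstacle is the compactness and rearrangement step in the existence argument, specifically showing that rearrangement does not increase $\|\sqrt{\cL_{\mu_0}}u\|$, since the kinetic and potential terms cannot be separated. We would handle this through the ground state substitution, which turns the form into $\int|\nabla v|^2|x|^{-(d-2)}dx$.
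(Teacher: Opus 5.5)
Your existence argument is essentially the paper's (Weinstein functional, rearrangement, radial compactness, Euler--Lagrange equation, rescaling). The rearrangement step you single out as the main obstacle is in fact routine: on the dense set $C_c^\infty(\R^d\setminus\{0\})$ both parts of $\|\sqrt{\cL_{\mu_0}}u\|_{L^2}^2$ are finite, so P\'olya--Szeg\H{o} and the Hardy--Littlewood rearrangement inequality apply separately.

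The genuine gap is in Part~3, which rests on a false claim: $(d-4)_+<\alpha$ and $p>2$ do not give $V=I_\alpha*w^p\in L^\infty$, nor do they make $Vw^{p-2}$ bounded. A positive solution can behave like $c|x|^{-\frac{d-2}{2}}$ at the origin (the radial ones do). Then near $0$ one has $V(x)\gtrsim|x|^{\alpha-\frac{(d-2)p}{2}}$ and $Vw^{p-2}\gtrsim|x|^{\alpha-(d-2)(p-1)}$. For the admissible choice $d=3$, $\alpha=1$, $p=3$ these blow up like $|x|^{-1/2}$ and $|x|^{-1}$.

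Consequently the comparison with $C|x|^{-\frac{d-2}{2}}$ near $0$ cannot be run. Since $\cL_{\mu_0}|x|^{-\frac{d-2}{2}}=0$ away from the origin, this profile is a supersolution for $\cL_{\mu_0}+1-Vw^{p-2}$ only where $Vw^{p-2}\le 1$. Comparison at the singular point would in any case require a priori control of $w$ there, which is what is to be proved.

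The missing ingredient is the paper's Moser iteration for $v=|x|^{\frac{d-2}{2}}w$ on the divergence-form equation in $H^1(\R^d;|x|^{-(d-2)})$. It uses the truncated powers $\Phi_{\beta,\lambda}(v)$, the Hardy--Littlewood--Sobolev inequality, and absorption of the region $\{v\ge M\}$ through the smallness of $\|\1_{\{v\ge M\}}|x|^{-\frac{d-2}{2}}v\|_{L^{2dp/(d+\alpha)}}$; this is where $p>2$ enters. The result is $v\in L^\infty$, i.e.\ $w\le C|x|^{-\frac{d-2}{2}}$ on all of $\R^d\setminus\{0\}$. That global bound is also what gives $Vw^{p-2}\to0$ at infinity and the boundary values on $\partial B_R$ needed for your exterior comparison. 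Without it those steps are unsupported for nonradial $w$.

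The origin is also not handled correctly in the Poho\v{z}aev identities. For Part~2 you use the classical multiplier $x\cdot\nabla\bar w$ with cutoffs and assert that the cutoff terms vanish. But for $1<p\le\frac{d+\alpha}{d}$ the admissible class (with $1<q<\frac{2d}{d+2}$) contains functions with $w\sim c|x|^{-\frac{d-2}{2}}$ at $0$. For such $w$:
\begin{itemize}
\item $\int|\nabla w|^2$ and $\int|x|^{-2}|w|^2$ diverge separately;
\item the terms supported in $A_{\epsilon,2\epsilon}$ are of order $|c|^2$ rather than $o(1)$;
\item they cancel only against a matching defect in the bulk term $\int\varphi_\epsilon(|\nabla w|^2-\mu_0|x|^{-2}|w|^2)$, whose limit exceeds $\|\sqrt{\cL_{\mu_0}}w\|_{L^2}^2$ by a multiple of $|c|^2$.
\end{itemize}

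For Part~1, differentiability of $\lambda\mapsto\lambda^{d/2}w(\lambda\cdot)$ into $\cQ_{\mu_0}$ at $\lambda=1$ amounts to $x\cdot\nabla w\in\cQ_{\mu_0}$. This is a weighted second-order condition at the singular point and at infinity, and you do not establish it. The difference-quotient alternative does not bypass it: $S\in C^1$ and $S'(w)=0$ only give $S(w_\lambda)-S(w)=o(\|w_\lambda-w\|_{\cQ_{\mu_0}})$. That is $o(|\lambda-1|)$ only under a quantitative bound on $\|w_\lambda-w\|_{\cQ_{\mu_0}}$ in terms of $|\lambda-1|$, which is again a regularity property of $w$.

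The paper avoids both problems by multiplying the equation for $v$ by $\phi_k\varphi_\epsilon\,(x\cdot\nabla v)$, which corresponds to the multiplier $x\cdot\nabla w+\frac{d-2}{2}w$. The weighted Dirichlet term is invariant under $v\mapsto v(\lambda\,\cdot)$ and contributes nothing in the limit. Every cutoff term is controlled by the integrals of $|x|^{-(d-2)}(|\nabla v|^2+v^2)$ and of $(I_\alpha*|w|^p)|w|^p$ over $A_{\epsilon,2\epsilon}\cup A_{k,2k}$, which tend to $0$. Only $v\in H^1(\R^d;|x|^{-(d-2)})$ and local regularity away from $0$ are needed, and the same identities then give Part~2.
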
	

We remark that the existence part in statement 1 is derived from the existence of an optimizer to the Hardy-Gagliardo-Nirenberg inequality  \eqref{eq:CHGN-def} and a suitable scaling (see Lemma \ref{lem:HGN}). 

The Pohoz\v aev identities \eqref{eq:Pohozaev-1}--\eqref{eq:Pohozaev-2} and statement 2 follows from Proposition \ref{prop:Pohozaev}. Due to the fact that the critical Hardy potential cannot be handled independently and separably from the Laplacian, the usual argument, namely by multiplying \eqref{eq:NLS} by the test function $\nabla w \cdot x$, is inapplicable. To overcome this difficulty, we employ the ground state representation $v(x)=|x|^{\frac{d-2}{2}}w(x)$ and convert equation \eqref{eq:EL-u} to a weighted equation of divergent form satisfied by $v$ (see \eqref{eq:transform-v}). Moreover, we also localize the singularity of the Hardy potential by using cut-off-function techniques. Then by carrying out carefully computations for the weighted divergent operator and the Choquard nonlinearity, we obtain the desired Pohoz\v aev identities. 

In order to establish the upper bounds \eqref{est:upbound-w}, we employ the Moser iteration for equation \eqref{eq:transform-v} to derive that $v \in L^\infty(\R^d)$, which implies that $w(x) \lesssim |x|^{-\frac{d-2}{2}}$ for $x \neq 0$. On one hand, this is a sharp estimate for $w$ near the origin. On the other hand, this allows to control the Choquard nonlinearity near infinity, which, combined with the argument from \cite{MorVan-2013}, yields the decay estimate for $w$ in \eqref{est:upbound-w}. Note that the condition $p>2$ is needed in both Moser iteration and the argument leading to the decay estimate. 

If we assume additionally that $w$ is radial then so is $v$. Performing a logarithmic transformation $\tilde v(\rho) = v(r)$ with $\rho=(-\ln(r))^{-\frac{1}{d-2}}$, $r \in (0,1)$, we derive an equation satisfied by $\tilde v$. By exploiting this equation and applying the theory of removable singularity, together with  the Harnack inequality in \cite{Ser-64}, we are able to show that $\tilde v(0) \in (0,\infty)$. This implies the asymptotic behavior near the origin in \eqref{asymp-behav}, which is a counterpart in the context of nonlocal nonlinearity of \cite[Theorem 1.2]{TraZog-15}. The decay rate in \eqref{asymp-behav} is obtained by using the argument in \cite{MorVan-2013}.

It is worth mentioning that the  uniqueness for \eqref{eq:EL-u} remains an open question. In the free-potential case (namely $\mu=0$), it is well-known that the kernel of the linearized operator of equation \eqref{eq:EL-u}  at a positive radial solution $w$ can be expressed as ${\rm span}\{\partial_{x_i}w: 1 \leq i \leq d\}$, which is a crucial ingredient in the proof of the uniqueness. However, such decomposition or its variants involving the Hardy operator $\cL_{\mu}$ is not known in the context of nonlocal nonlinearity, even for $\mu<\mu_0$. The presence of the Hardy potential also prevents the adaptation of the method in \cite{Lie-76,WanYi-17}. We stress that the ODE-based approach, which relies on the the generalized Pohoz\v aev identity in \cite{ShiWat-13} and was successfully applied for the NLS equation with power-type nonlinearity in \cite{MukNamNgu-2021}, is not applicable to problem \eqref{eq:NLS} due to the nonlocal nature of the Choquard term. 

The next theorem provides the existence of optimizers for the Hardy-Gagliardo-Nirenberg inequality involving the Choquard term.

\begin{theorem}[Hardy-Gagliardo-Nirenberg inequality] \label{thm:HGN} Let $d\ge 3$ and $\frac{d+\alpha}{d}<p<\frac{d+\alpha}{d-2}$. Then the following inequality holds 
\begin{equation} \label{eq:CHGN-def}
	\|\sqrt{\cL_{\mu_0}}u\|_{L^2}^{\theta} \|u\|_{L^2}^{1-\theta}  \geq {\sC}{\| (I_\alpha * |u|^p)|u|^p\|_{L^1}^{\frac{1}{2p}}}  \quad \text{where } \theta:= \frac{dp-(d+\alpha)}{2p} \in (0,1),
\end{equation}	
for all $u\in \cQ_{\mu_0}$, with the sharp constant  $\sC$ given by
\begin{equation} \label{C-W-def}
	{\sC} := \inf_{u\in \cQ_{\mu_0} \setminus \{0\}}\cW(u), \quad \text{where} \quad \cW(u) := \frac{ \|\sqrt{\cL_{\mu_0}}u\|_{L^2}^{\theta} \|u\|_{L^2}^{1-\theta} }{{\| (I_\alpha * |u|^p)|u|^p\|_{L^1}^{\frac{1}{2p}}}}.
\end{equation}
Moreover, $\sC$ is attained by a positive radial nonincreasing function $\tilde u \in \cQ_{\mu_0}$. 

In addition, any radial minimizer $u$ for \eqref{C-W-def} can be represented by
\begin{equation} \label{representation-minimizer}
u(x)=z Q(\lambda x)
\end{equation} 
for some $z\in \mathbb{C}$, $\lambda>0$ and a radial solution $Q \in \cQ_{\mu_0}$ to equation \eqref{eq:EL-u}. The sharp constant $\sC$ is related to $Q$ by
$$ \sC = \theta^{\frac{\theta}{2}}(1-\theta)^{\frac{1}{2p}-\frac{\theta}{2}}\| Q\|_{L^2}^{\frac{p-1}{p}}.
$$
\end{theorem}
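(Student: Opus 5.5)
The plan is to prove the inequality and the existence of an optimizer together by the direct method, restricted to radial nonincreasing functions via rearrangement, and then to identify minimizers through the Euler--Lagrange equation.

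\emph{Step 1: positivity of $\sC$.} By the Hardy--Littlewood--Sobolev inequality,
\[
\|(I_\alpha*|u|^p)|u|^p\|_{L^1}\lesssim \|u\|_{L^{r}}^{2p}, \qquad r=\tfrac{2dp}{d+\alpha}\in(2,2^*).
\]
By the embedding \eqref{eq:Q-in-Lp}, $\|u\|_{L^r}\lesssim\|u\|_{\cQ_{\mu_0}}$. We then use the two scalings $u\mapsto au$ and $u\mapsto u(\lambda\cdot)$; both $\sqrt{\cL_{\mu_0}}$ and the Choquard term are homogeneous under dilation. This upgrades the inhomogeneous bound to the homogeneous form \eqref{eq:CHGN-def}: optimize $\|\sqrt{\cL_{\mu_0}}u_\lambda\|^2+\|u_\lambda\|^2$ over $\lambda$. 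The exponent $\theta$ is exactly the one making $\cW$ invariant under both scalings, so $\sC>0$.

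\emph{Step 2: reduction to radial functions.} I would show that $\cW(|u|^*)\le\cW(u)$, where $|u|^*$ is the symmetric decreasing rearrangement. The Riesz rearrangement inequality increases the Choquard term, and the $L^2$ norm is preserved. The delicate part is $\|\sqrt{\cL_{\mu_0}}|u|^*\|\le\|\sqrt{\cL_{\mu_0}}u\|$, since neither the gradient term nor the potential term is finite separately. I would first use $|\nabla|u||\le|\nabla u|$ and then invoke the known rearrangement inequality for the critical Hardy form, for example through the representation
\[
\|\sqrt{\cL_{\mu_0}}u\|^2=\int|x|^{-(d-2)}|\nabla v|^2\,dx, \qquad v=|x|^{\frac{d-2}{2}}u,
\]
or through Frank's results. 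Everything is first done on $C_c^\infty(\R^d\setminus\{0\})$, which is a form core, and then passed to $\cQ_{\mu_0}$ by density. So it suffices to minimize over radial nonincreasing $u\ge0$.

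\emph{Step 3: the minimizing sequence.} Take such a minimizing sequence $u_n$ and normalize it by scaling so that $\|u_n\|_{L^2}=\|\sqrt{\cL_{\mu_0}}u_n\|_{L^2}=1$. Bounded in $\cQ_{\mu_0}$, it converges weakly to some $\tilde u$. The key compactness claim is that radial $\cQ_{\mu_0}$ embeds compactly into $L^r$ for $2<r<2^*$. I would prove it by a Strauss-type pointwise decay: for radial nonincreasing $u$, $|u(x)|\lesssim|x|^{-d/2}\|u\|_{L^2}$, giving uniform tails at infinity. Near the origin, local compactness comes from $\cQ_{\mu_0}\subset H^s$ for some $s\in(0,1)$ with $r<2d/(d-2s)$. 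Hence the Choquard term converges by HLS, so it equals $\sC^{-2p}$ in the limit and $\tilde u\neq0$. Weak lower semicontinuity of both norms then gives $\cW(\tilde u)\le\sC$, so $\tilde u$ is a positive radial nonincreasing minimizer. Positivity, rather than nonnegativity, would follow afterwards from the equation.

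I expect the main obstacle to be Step 2, the rearrangement inequality for the critical Hardy form, together with the compactness at the origin in Step 3. Both require keeping the kinetic and potential terms tied together.

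\emph{Step 4: identifying minimizers.} Let $u$ be a radial minimizer. Then $\frac{d}{d\varepsilon}\cW(u+\varepsilon\varphi)|_{\varepsilon=0}=0$, and computing it gives
\[
\frac{\theta}{A}\cL_{\mu_0}u+\frac{1-\theta}{B}u=\frac{1}{D}(I_\alpha*|u|^p)|u|^{p-2}u,
\]
where $A=\|\sqrt{\cL_{\mu_0}}u\|^2$, $B=\|u\|^2$ and $D=\|(I_\alpha*|u|^p)|u|^p\|_{L^1}$. Setting $u(x)=zQ(\lambda x)$ and choosing $\lambda,|z|$ normalizes the coefficients to $1,1,1$, so $Q$ solves \eqref{eq:EL-u}. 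The phase of $z$ is free, since the equation shows $u/|u|$ is constant for a minimizer: $\cW(|u|)\le\cW(u)$, with equality forcing a constant phase.

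\emph{Step 5: the constant.} For $\sC$, plug $Q$ into $\cW$ and use the Pohozaev identities \eqref{eq:Pohozaev-1}--\eqref{eq:Pohozaev-2} from Theorem~\ref{th:groundstate}. With these, $\|\sqrt{\cL_{\mu_0}}Q\|^2$ and $\|(I_\alpha*|Q|^p)|Q|^p\|_{L^1}$ become explicit multiples of $\|Q\|^2_{L^2}$. Noting that
\[
\frac{dp-d-\alpha}{d+\alpha-(d-2)p}=\frac{\theta}{1-\theta} \qquad\text{and}\qquad \frac{2p}{d+\alpha-(d-2)p}=\frac{1}{1-\theta},
\]
a direct computation yields the stated formula for $\sC$.
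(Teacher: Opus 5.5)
Your proposal is correct. For the inequality and the existence of a positive radial nonincreasing optimizer, it coincides with the paper's proof of Lemma~\ref{lem:HGN}: Weinstein's direct method on a minimizing sequence from $C_c^\infty(\R^d\setminus\{0\})$, symmetrization, the normalization $\|u_n\|_{L^2}=\|\sqrt{\cL_{\mu_0}}u_n\|_{L^2}=1$, the compact radial embedding, HLS and weak lower semicontinuity. The paper quotes the embedding as Lemma~\ref{lem:radial-compact}, while you re-derive it for nonincreasing functions by Strauss decay plus local $H^s$ compactness; that suffices here.

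The step you single out as the main obstacle is routine on that dense class, because both terms of the form are finite there. P\'olya--Szeg\"o together with the Hardy--Littlewood rearrangement inequality for the symmetric decreasing weight $|x|^{-2}$ gives $\|\sqrt{\cL_{\mu_0}}\,|u|^*\|_{L^2}\le\|\sqrt{\cL_{\mu_0}}u\|_{L^2}$ with $|u|^*\in H^1$. No ground state representation is needed.

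Where you genuinely differ from the paper is the representation of radial minimizers. The paper approximates $u$ and invokes the compactness Theorem~\ref{precompact} to conclude that $|u|$ is a minimizer. You instead write the Euler--Lagrange equation for $u$ itself and use $\cW(|u|)\le\cW(u)$ directly. Your route is more elementary. It also avoids the hypotheses $(d-4)_+<\alpha$ and $p>2$, under which Theorem~\ref{precompact} is stated but which this theorem does not assume.

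In the equality case of the phase argument you need $|u|>0$ on $\R^d\setminus\{0\}$. Otherwise zero spheres of a radial function would allow different constant phases on different annuli. This follows from the strong maximum principle applied to the equation satisfied by the nonnegative minimizer $|u|$; the paper leaves the same point implicit through $Q\in\cG$.

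Finally, the paper reads off the constant from the explicit scaling in \eqref{eq:CHGN-Q}, while you use Proposition~\ref{prop:Pohozaev}. That works, but it is not needed. The relations $\|\sqrt{\cL_{\mu_0}}Q\|_{L^2}^2=\frac{\theta}{1-\theta}\|Q\|_{L^2}^2$ and $\|(I_\alpha*|Q|^p)|Q|^p\|_{L^1}=\frac{1}{1-\theta}\|Q\|_{L^2}^2$ already follow from your choice of $\lambda$ and $|z|$ equalizing the Euler--Lagrange coefficients. So the Poho\v{z}aev identities and their regularity hypotheses can be bypassed.
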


Theorem \ref{thm:HGN}, together with Theorem \ref{th:groundstate}, extends \cite[Theorem 2.3]{FenYua-15} and \cite[Lemma 4.1 (1)]{Li-20} to the case of critical Hardy potential.
To prove the existence of an optimizer for \eqref{C-W-def}, we follow Weinstein' strategy, making use of compact embeddings in \cite{TraZog-15} (see Lemma \ref{lem:radial-compact}) and the Hardy-Littlewood-Sobolev inequality \eqref{inequa:HLS-3}.

A function $Q \in \cQ_{\mu_0}$ is called a \textit{ground state} if $Q$ is a positive radial solution to \eqref{eq:EL-u} such that $Q$ is a minimizer for problem \eqref{C-W-def}. The set of all ground states is denoted by $\cG$.

When $\frac{d+\alpha}{d}<p<\frac{d+\alpha}{d-2}$, thanks to the Pohoz\v aev identities \eqref{eq:Pohozaev-1} and \eqref{eq:Pohozaev-2},  if $Q \in \cG$ then 
\begin{align} \label{Mgs}
\| Q \|_{L^2} &= \theta^{-\frac{dp-d-\alpha}{4(p-1)}}(1-\theta)^{\frac{dp-d-\alpha-2}{4(p-1)}}\sC^{\frac{p}{p-1}}=:M_{\rm gs}, \\ \label{Kgs}
\| \sqrt{\cL_{\mu_0}}Q \|_{L^2} &= 	\theta^{\frac{1}{2}-\frac{dp-d-\alpha}{4(p-1)}}(1-\theta)^{\frac{dp-d-\alpha-2}{4(p-1)}-\frac{1}{2}}\sC^{\frac{p}{p-1}}=:H_{\rm gs}, \\ \label{Ngs}
\| (I_\alpha * Q^p)Q^p\|_{L^1} &= \theta^{-\frac{dp-d-\alpha}{2(p-1)}}(1-\theta)^{1+\frac{dp-d-\alpha-2}{2(p-1)}}\sC^{\frac{2p}{p-1}}=:N_{\rm gs}.
\end{align} 
It means that for any ground states $Q$, the quantities $\| Q \|_{L^2}$, $\| \sqrt{\cL_{\mu_0}}Q \|_{L^2}$ and $\| (I_\alpha * Q^p)Q^p\|_{L^1}$ are independent of $Q$. 

Let $E$ be the energy functional defined by
\begin{align} \label{energy}
	E(v) = \frac{1}{2}\| \sqrt{\cL}_{\mu_0}v \|_{L^2}^2 - G(v) \quad \text{with} \quad G(v) = \frac{1}{2p}\int_{\R^d}(I_\alpha * |v|^p)|v|^pdx, \quad v \in \cQ_{\mu_0}.
\end{align} 
Thanks to \eqref{Kgs} and \eqref{Ngs}, for any ground state $Q$, the energy $E(Q)$ is independent of $Q$ since
$$ E(Q) = \frac{1}{2}H_{\rm gs}^2 - \frac{1}{2p}N_{\rm gs}=: E_{\rm gs}.
$$

The results in Theorems \ref{th:groundstate} and \ref{thm:HGN} will be used to study problem \eqref{eq:NLS}. In order to state the main results, we introduce a notion of weak solutions to \eqref{eq:NLS}.

\begin{definition}[Weak solutions] A function $u$ is called a {\em weak solution} to \eqref{eq:NLS} in $(0,T)$ with initial datum 
	$u_0\in \cQ_{\mu_0}$  if 
	$$u \in C([0,T);\cQ_{\mu_0}) \cap C^1([0,T);\cQ_{\mu_0}^*)$$
	and it satisfies the Duhamel formula
	\begin{align} \label{eq:Duhamelu} 
		u(t)=e^{-it\cL_{\mu_0}}u_0 + i\int_0^t e^{-i(t-s)\cL_{\mu_0}} [(I_\alpha * |u|^p) |u|^{p-2}u](s)ds, \quad \forall t \in (0,T).
	\end{align}
Here $\cQ_{\mu_0}^*$ denotes the dual space of $\cQ_{\mu_0}$. In the above formula, the dependence on $x$ is omitted. If $T=\infty$ then $u$ is called a {\em global solution}. 
\end{definition}

When $p=2$, the local and global existence were obtained in  \cite[Remark 4.2]{Suz-16}, while blow-up criteria were showed in \cite[Theorem 4.1 (B3)]{Suz-17}. To the best of our knowledge, the local wellposedness of problem \eqref{eq:NLS} with $\frac{d+\alpha}{d} \leq p <2$ still remains open since both energy method \cite{Suz-13,Suz-16} and the standard fixed point argument are not applicable. In this paper, \textit{we consider problem  \eqref{eq:NLS} with} 
\begin{equation} \label{assump:p>2} d \geq 3, \quad (d-4)_+< \alpha < d \quad \text{and} \quad 2<p<\frac{d+\alpha}{d-2}. 
\end{equation}
It is known that under condition \eqref{assump:p>2}, for any $u_0 \in \cQ_{\mu_0}$, there is a constant $T=T(\| u_0\|_{\cQ_{\mu_0}})$ such that problem \eqref{eq:NLS} admits a unique weak solution $u \in C([0,T);\cQ_{\mu_0}) \cap C^1([0,T);\cQ_{\mu_0}^*)$. Moreover, the mass and energy are conserved, i.e. for any $t \in [0,T)$,
\begin{equation} \label{convervationlaw} \| u(t)\|_{L^2} = \| u_0 \|_{L^2}, \quad E(u(t)) = E(u_0).
\end{equation}
Furthermore, there is a maximum time $T^*$ in the sense that either $T^*=\infty$, namely the solution is global, or $T^*<\infty$ and $\lim_{t \uparrow T^*}\| \sqrt{\cL_{\mu_0}}u\|_{L^2} = \infty$, namely the solution blows up in finite time. See Proposition \ref{prop:localexistence-abs} for a detailed statement, which is derived from a more general result in \cite[Theorem 1.6]{Suz-20}. 

The next theorem provides sufficient conditions under which the global existence and finite-time blow-up for problem \eqref{eq:NLS} hold.
\begin{theorem} \label{th:global-blowup} Under assumption \eqref{assump:p>2}, let $u$ be the unique local weak solution with maximal lifespan $(0,T^*)$.
	
\noindent {\sc 1.  Sufficient conditions for global existence.} 
Assume one of the following cases holds

$(a)$ $d-2<\alpha<d$ and $2<p<\frac{d+\alpha+2}{d}$; 

$(b)$ $d-2<\alpha<d$, $p=\frac{d+\alpha+2}{d}$ and $\| u_0 \|_{L^2} < M_{\rm gs}$; 

$(c)$ $\max\{2,\frac{d+\alpha+2}{d}\} < p < \frac{d+\alpha}{d-2}$ and 
\begin{equation}\label{eqB3}
	E(u_0)\|u_0\|_{L^2}^\kappa<E_{\rm gs}M_{\rm gs}^\kappa \quad \text{and} \quad \|\sqrt{\mathcal{L}_{\mu_0}} u_0\|_{L^2}^2\|u_0\|_{L^2}^\kappa<H_{\rm gs}^2 M_{\rm gs}^\kappa.
\end{equation}
where 
\begin{equation} \label{kappa}
	\kappa=\frac{2(d+\alpha-(d-2)p)}{dp-d-\alpha-2}.
\end{equation}
Then $u$ exists globally, namely $T^*=\infty$. Moreover, in any of the above cases, $\| \sqrt{\cL_{\mu_0}}u(t) \|_{L^2}$ remains uniformly bounded in $t \in (0,\infty)$. \medskip

\noindent {\sc 2. Sufficient conditions for blow-up.} If $\frac{d+\alpha + 2}{d} \leq p <\frac{d+\alpha}{d-2}$, $|x| u_0 \in L^2(\R^d)$ and either $E(u_0)<0$, or  
\begin{equation} \label{cond-Eu-Q}
	E(u_0)\left\|u_0\right\|_{L^2}^\kappa<E_{\rm gs}M_{\rm gs}^\kappa, \quad\|\sqrt{\cL_{\mu_0}} u_0\|_{L^2}^2\|u_0\|_{L^2}^\kappa>H_{\rm gs}^2 M_{\rm gs}^\kappa,
\end{equation}
then $T^*<+\infty$.
\end{theorem}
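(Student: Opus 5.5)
The whole argument rests on three facts: conservation of mass and energy \eqref{convervationlaw}, the blow-up alternative from Proposition \ref{prop:localexistence-abs}, and the sharp inequality \eqref{eq:CHGN-def} from Theorem \ref{thm:HGN}. Write $M=\|u_0\|_{L^2}$ and $y(t)=\|\sqrt{\cL_{\mu_0}}u(t)\|_{L^2}$. By \eqref{eq:CHGN-def},
$$G(u)\le \frac{1}{2p}\sC^{-2p}\,y^{2p\theta}M^{2p(1-\theta)}, \qquad 2p\theta=dp-d-\alpha .$$
Conservation then gives, for all $t$,
$$E(u_0)\ge \tfrac12 y^2-\frac{1}{2p}\sC^{-2p}M^{2p(1-\theta)}y^{dp-d-\alpha}.$$
The mass-critical exponent is $dp-d-\alpha=2$, i.e. $p=\frac{d+\alpha+2}{d}$.

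\textbf{Global existence.} In case (a), $dp-d-\alpha<2$, so the right-hand side is coercive in $y$ and $y(t)$ is bounded. The requirement $p>2$ is compatible with this range only when $\alpha>d-2$. In case (b), the right-hand side equals $\tfrac12 y^2\bigl(1-\tfrac1p\sC^{-2p}M^{2p(1-\theta)}\bigr)$. Using \eqref{Mgs} with $\theta=1/p$, the bracket is positive exactly when $M<M_{\rm gs}$, which again bounds $y$. In case (c), consider $f(y)=\tfrac12 y^2-cy^{\beta}$ with $\beta=dp-d-\alpha>2$ and $c=\frac{1}{2p}\sC^{-2p}M^{2p(1-\theta)}$. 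This $f$ increases on $[0,y_*]$ and decreases afterwards. The exponent $\kappa$ in \eqref{kappa} is chosen so that the quantities $E\,M^\kappa$ and $y^2M^\kappa$ are scaling invariant. Using the ground state identities \eqref{Mgs}--\eqref{Ngs}, I will verify that
$$y_*^2M^\kappa=H_{\rm gs}^2M_{\rm gs}^\kappa, \qquad f(y_*)M^\kappa=E_{\rm gs}M_{\rm gs}^\kappa,$$
which is a routine exponent computation. Since $E(u_0)<f(y_*)$ and $y(0)<y_*$, and $y$ is continuous in $t$, $y(t)$ can never reach $y_*$. Hence $y(t)<y_*$ for all $t$, and the blow-up alternative gives $T^*=\infty$. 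The same argument in case (c) of the blow-up part yields $y(t)>y_*$ for all $t$, and moreover $E(u_0)\le(1-\delta)f(y_*)$ for some $\delta>0$.

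\textbf{Blow-up via the virial identity.} For $|x|u_0\in L^2$, I would use the virial identity from \cite[Remark 4.2]{Suz-16}, adapted to the Choquard term. Set $V(t)=\int|x|^2|u|^2\,dx$. Then
$$V''(t)=8y^2-\frac{4(dp-d-\alpha)}{p}\int(I_\alpha*|u|^p)|u|^p\,dx = 8\beta E(u_0)-4(\beta-2)y^2 .$$
If $E(u_0)<0$ and $\beta\ge2$, then $V''\le 8\beta E(u_0)<0$, so $V$ becomes negative in finite time, which is impossible; hence $T^*<\infty$. Under \eqref{cond-Eu-Q} (where $\beta>2$), the bounds $y>y_*$ and $E(u_0)\le(1-\delta)f(y_*)$ give $V''\le -c_0<0$, again forcing finite-time blow-up. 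The case $\beta=2$ needs no extra work, since there $E_{\rm gs}=0$ and the second alternative reduces to $E(u_0)<0$.

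\textbf{Main obstacle.} The hard step is justifying the virial identity in $\cQ_{\mu_0}$, where the critical Hardy term cannot be separated from the Laplacian. Formally, $[\cL_{\mu_0},|x|^2]$ produces exactly the term $8\|\sqrt{\cL_{\mu_0}}u\|^2$, because $|x|^{-2}$ is homogeneous of degree $-2$. To make this rigorous, I would first take smooth approximating data, or use a truncated weight $\varphi_R$, and compute with the ground state representation $v=|x|^{\frac{d-2}{2}}u$ as in the proof of the Pohozaev identities. I would then pass to the limit using continuous dependence from Proposition \ref{prop:localexistence-abs}. For the Choquard term, the standard computation gives $\int x\cdot\nabla(I_\alpha*|u|^p)|u|^p\,dx=-\frac{d-\alpha}{2}\int(I_\alpha*|u|^p)|u|^p\,dx$, which produces the coefficient above.
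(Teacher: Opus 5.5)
Your proposal is correct and follows the paper's proof essentially step by step: sharp Hardy--Gagliardo--Nirenberg plus conservation laws with a continuity (barrier) argument for global existence (your $f(y)$ is the paper's $P(s)$ with $s=y^2\|u_0\|_{L^2}^\kappa$), and the virial identity with the Holmer--Roudenko trapping $y(t)>y_*$ for blow-up. The step you single out as the main obstacle, the rigorous virial identity in $\cQ_{\mu_0}$, is not proved in the paper but quoted from \cite{Suz-20} (Proposition \ref{prop:Virial}) as $\Gamma_u''=16E(u_0)+8(d+\alpha+2-dp)G(u(t))$, which is exactly your $8\beta E(u_0)-4(\beta-2)y^2$.
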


Theorem \ref{th:global-blowup} is an extension of  \cite[Theorem 1.2 and Theorem 1.2 (i)]{FenYua-15} and \cite[Theorem 1.3 (ii), Theorem 1.4 and Theorem 1.6]{Li-20} to the context of critical Hardy potential. The proof of the global existence relies on the idea in \cite{Wei-83,KenMer-08}, while the finite-time blow-up follows from an adaptation of the methods in \cite{HolRou-07,KenMer-08,Suz-17}. For the sake of completeness, the proof of  Theorem \ref{th:global-blowup} is presented in Section \ref{sec:global-blowup}.

Finally, we provide a characterization of the minimal mass blow-up solutions in the spirit of the celebrated result in \cite{Mer-93}.

\begin{theorem}[Minimal mass blow-up solutions] \label{thm:min-mass-sol} Assume $d \geq 3$, $d-2<\alpha<d$ and $p=\frac{d+\alpha+2}{d}$. 

\noindent {\sc 1. Existence.} Let $\gamma\in \R$, $\lambda>0$, $T>0$ and $Q \in \cG$. Then 
		\begin{equation} \label{eq:u-0-T}
			u(t,x)= e^{i \gamma}e^{i\frac{\lambda^2}{T-t}}e^{-i\frac{|x|^2}{4(T-t)}}\left( \frac{\lambda}{T-t} \right)^{\frac{d}{2}} Q\left( \frac{\lambda x}{T-t} \right) \quad \forall x \in \R^d, \; t\in [0,T)
		\end{equation}
is a weak solution to problem \eqref{eq:NLS} in $(0,T)$ and blows up in the finite time $T$. 
		
\noindent {\sc 2. Characterization.} For any finite time $T>0$, if $u$ is a weak solution to problem \eqref{eq:NLS} in $(0,T)$ with $\|u_0\|_{L^2} = M_{{\rm gs}}$ and blows up at $T$, namely $\lim _{t \nearrow_T}\| \sqrt{\cL}_{\mu_0} u(t)\|_{L^2} = +\infty$,  then $u$ is given in \eqref{eq:u-0-T} for some constants  $\gamma\in \mathbb{R}$, $\lambda>0$ and $Q \in \cG$. 
\end{theorem}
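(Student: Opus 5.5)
The plan follows Merle's strategy \cite{Mer-93}, adapted to the critical Hardy setting.

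\emph{Existence.} The solution in \eqref{eq:u-0-T} comes from the pseudo-conformal transformation. In the mass-critical case $p=\frac{d+\alpha+2}{d}$, the equation in \eqref{eq:NLS} is invariant under the lens transform. The reason is that $\cL_{\mu_0}$ is homogeneous of degree $-2$, exactly like $-\Delta$, and the Choquard term scales correctly precisely when $p=\frac{d+\alpha+2}{d}$. For a smooth $v$ away from the origin, the commutator of $e^{-i|x|^2/(4(T-t))}$ with $-\Delta$ produces the usual terms, and the potential $\mu_0|x|^{-2}$ commutes with multiplication. The time-periodic solution $e^{it}Q(x)$ of \eqref{eq:NLS} is therefore mapped to \eqref{eq:u-0-T}. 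To check the weak-solution notion, the plan is:
\begin{itemize}
\item verify that $u\in C([0,T);\cQ_{\mu_0})\cap C^1([0,T);\cQ_{\mu_0}^*)$, using that $|x|Q\in L^2$, which follows from the exponential decay \eqref{est:upbound-w};
\item verify that $u$ solves the equation in $\cQ_{\mu_0}^*$ in the distributional sense, tested against $\cQ_{\mu_0}$ functions;
\item conclude that $u$ satisfies Duhamel's formula, by the uniqueness in Proposition \ref{prop:localexistence-abs}.
\end{itemize}
Blow-up is immediate, since $\|\sqrt{\cL_{\mu_0}}u(t)\|_{L^2}\ge c\,\lambda (T-t)^{-1}H_{\rm gs}-C$.

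\emph{Characterization.} Let $t_n\to T$ and set $\lambda_n=\|\sqrt{\cL_{\mu_0}}Q\|_{L^2}/\|\sqrt{\cL_{\mu_0}}u(t_n)\|_{L^2}\to0$, and $v_n(x)=\lambda_n^{d/2}u(t_n,\lambda_n x)$. By scaling and energy conservation, $\|v_n\|_{L^2}=M_{\rm gs}$, $\|\sqrt{\cL_{\mu_0}}v_n\|_{L^2}=H_{\rm gs}$ and $E(v_n)=\lambda_n^2E(u_0)\to0$. The key compactness step, which is the announced compactness result of the paper, is the following. A sequence bounded in $\cQ_{\mu_0}$ with $\|v_n\|_{L^2}=M_{\rm gs}$ and $E(v_n)\to0$ has a subsequence converging strongly in $\cQ_{\mu_0}$ to some $zQ$ with $Q\in\cG$. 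Note that no translations are needed, because the Hardy potential pins the concentration at the origin.

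This step is where I expect the main difficulty. The plan for it is:
\begin{itemize}
\item Since $G(v_n)\to\frac12 H_{\rm gs}^2>0$, the Hardy–Littlewood–Sobolev inequality rules out vanishing.
\item Use a profile decomposition, or concentration-compactness in $\cQ_{\mu_0}$. Profiles escaping to infinity behave like $H^1$ functions with the free Laplacian. For these, the sharp Gagliardo–Nirenberg constant without potential is strictly larger than $\sC$, because $\cL_{\mu_0}$ lowers the kinetic energy. Together with the strict superadditivity of $m\mapsto m^{2p}$ and the mass constraint $M_{\rm gs}$, this forces a single profile centered at $0$.
\item Pass to the weak limit $v$ in $\cQ_{\mu_0}$. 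The Brezis–Lieb lemma for the Choquard term and the sharp inequality \eqref{eq:CHGN-def} give $\|v\|_{L^2}=M_{\rm gs}$, so the convergence is strong and $v$ is a minimizer of $\cW$.
\item Show that $v$ has the form $zQ$ with $Q\in\cG$. Without radial symmetry this needs care. I would first show that $|v|$ is also a minimizer. Then, by the Euler–Lagrange equation and the strict rearrangement inequality for the Riesz potential together with the Hardy term, $|v|$ is radial up to the phase. Finally, Theorem \ref{thm:HGN} gives $|v|=Q(\lambda\cdot)$.
\end{itemize}

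With compactness in hand, $|u(t)|^2\rightharpoonup M_{\rm gs}^2\delta_0$ as $t\to T$. The virial identity for $\cL_{\mu_0}$ (as in \cite{Suz-17}) gives $|x|u_0\in L^2$. For $\phi\in C_c^\infty$ with $\phi=|x|^2$ near $0$, the localized virial argument then yields $\||x|u(t)\|_{L^2}\to0$. Consider next the pseudo-conformal energy identity $8t^2E(v)$ with $v=e^{i|x|^2/(4T)}u_0$. Combining it with the sharp inequality \eqref{eq:CHGN-def} at mass $M_{\rm gs}$, one obtains $E(e^{i|x|^2/(4T)}u_0)=0$, hence $\|\sqrt{\cL_{\mu_0}}\,\cdot\|_{L^2}^2=2G(\cdot)$. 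It follows that $e^{i|x|^2/(4T)}u_0$ is a minimizer of $\cW$, and the compactness characterization above identifies it as $e^{i\gamma}\lambda_0^{d/2}Q(\lambda_0x)$. Finally, uniqueness of weak solutions together with Part 1 gives \eqref{eq:u-0-T}.
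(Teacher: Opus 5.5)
Your overall route is the paper's. You rescale so that $v_n$ is a minimizing sequence for $\cW$, you get compactness without translations, and you deduce $|u(t)|^2\rightharpoonup M_{\rm gs}^2\delta_0$. The pseudo-conformal phase then produces a critical-mass datum of zero energy, you identify it with a ground state, and uniqueness concludes. Your handling of the possibly non-radial minimizer $e^{i|x|^2/(4T)}u_0$ is more careful than the paper's: you pass to $|v|$, apply strict Riesz rearrangement, let the Hardy term pin the centre at $0$, and then show the phase is constant. The paper goes directly from ``minimizer'' to $e^{i\sigma}\lambda_1^{d/2}Q(\lambda_1x)$, although Theorem~\ref{thm:HGN} only represents radial minimizers. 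The genuine gap is in how you obtain finite variance and $\Gamma_u(t)\to0$.

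You say the virial identity gives $|x|u_0\in L^2$. This is circular. Proposition~\ref{prop:Virial}, like any virial identity, assumes $|x|u_0\in L^2$. In Part~2 the datum is an arbitrary $u_0\in\cQ_{\mu_0}$ with $\|u_0\|_{L^2}=M_{\rm gs}$, so finite variance is a conclusion, not an input. A localized virial with one fixed $\phi\in C_c^\infty$, $\phi=|x|^2$ near $0$, does not repair this, for three reasons. First, $\int\phi|u(t)|^2\to0$ already follows from mass concentration, since $\phi(0)=0$. Second, it gives no control of $\int|x|^2|u(t)|^2$ outside the support of $\phi$. Third, the naive bound on $\frac{d}{dt}\int\phi|u|^2=2\int\nabla\phi\cdot\Im(\bar u\nabla u)$ involves $\|\sqrt{\cL_{\mu_0}}u(t)\|_{L^2}$, which tends to $\infty$.

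The missing ingredient is Merle's momentum bound, which is Lemma~\ref{lem:nabla-phase}(ii) in the paper. At mass $M_{\rm gs}$, the sharp inequality \eqref{eq:CHGN-def} gives $E(e^{is\phi}v)\ge0$ for all $s\in\R$. The discriminant of the quadratic \eqref{eq:uei-energy} in $s$ then gives
\[
\Bigl|\int_{\R^d}\nabla\phi\cdot\Im(\bar v\nabla v)\,dx\Bigr|\le\sqrt{2E(v)}\,\|v\nabla\phi\|_{L^2}.
\]
Take $\phi_A(x)=A^2\psi(x/A)$, where $\psi\in C_c^\infty$ is radial, $\psi\ge0$, $\psi=|x|^2$ near $0$, and $|\nabla\psi|^2\le C\psi$.
\begin{enumerate}
\item With energy conservation, the bound yields $\bigl|\frac{d}{dt}(\int\phi_A|u(t)|^2)^{1/2}\bigr|\le C\sqrt{E(u_0)}$, uniformly in $A$.
\item Integrating on $[t,T)$ and using $\int\phi_A|u(s)|^2\to M_{\rm gs}^2\phi_A(0)=0$ as $s\to T$ gives $\int\phi_A|u(t)|^2\le CE(u_0)(T-t)^2$.
\item Fatou's lemma as $A\to\infty$ then gives both $|x|u(t)\in L^2$ and $\Gamma_u(t)\to0$.
\end{enumerate}
Only after this may you use the global virial identity. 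Since $\Gamma_u''=16E(u_0)$ when $p=\frac{d+\alpha+2}{d}$, Taylor's formula gives
\[
8T^2E(e^{i|x|^2/(4T)}u_0)=\Gamma_u(0)+T\Gamma_u'(0)+8T^2E(u_0)=\lim_{t\to T}\Gamma_u(t)=0,
\]
which is exactly the step you need. The paper obtains this as $\Gamma_u(t)=8E(u_0)(T-t)^2$, via Lemma~\ref{lem:nabla-phase} and \cite[Lemma 15]{MukNamNgu-2021}.
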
	

The proof of the characterization in Theorem  \ref{thm:min-mass-sol} relies on a compactness of minimizing sequence for \eqref{C-W-def}. In the case of subcritical Hardy potential, the compactness result is proved by using the equivalent between the seminorms $\| \nabla u \|_{L^2}$ and $\| \sqrt{\cL_{\mu}}u \|_{L^2}$ and the concentration-compactness principle expressed in terms of the linear profile decomposition (see, e.g., \cite[Proposition 4.3]{CheLuLu-19}). However, this analysis cannot be extended to the case of critical Hardy potential. In this case, we combine the concentration-compactness method \cite{Lio-84}, geometric-localization-based techniques,  and a decomposition of the Choquard nonlinearity, to obtain the compactness result. This allow us to adapt the strategy in \cite{HmiKer-05} to establish the characterization for blow-up solutions with critical mass. Our result extends and improve  \cite[Theorems 1.4 and 1.6]{FenYua-15} and \cite[Theorem 1.2 (2)]{CheLuLu-19}, and can be viewed as a counterpart in the context of nonlocal nonlinearities of \cite[Theorem 4]{MukNamNgu-2021}.

 \medskip
\noindent \textbf{Organization of the paper}. In Section \ref{sec:HGN}, we will prove the existence of optimizers for \eqref{C-W-def} and establish the Pohoz\v{a}ev identities, as well as the asymptotic behavior at the origin and at infinity for ground state solutions, which allow us to obtain Theorem \eqref{th:groundstate}. In Section \ref{sec:compactness}, we first establish a compactness lemma for minimizing sequences  for \eqref{C-W-def} and then prove Theorem \ref{thm:HGN}. Section \ref{sec:global-blowup} is devoted to the proof of global existence and finite-time blow-up in Theorem \ref{th:global-blowup}. In Section \ref{sec:minimalmass}, we show the characterization of minimal mass blow-up solutions stated in Theorem \ref{thm:min-mass-sol}. Finally, in the Appendix, we provides auxiliary results which will be used in the proof of the compactness result. \medskip

\noindent \textbf{Notations and conventions.} Throughout the paper, for simplicity, we write $\| \cdot\|_{L^p}$ for 	$\| \cdot\|_{L^p(\R^d)}$. For $r>0$ and $x \in \R^d$, we denote $B_r(x)=\{y \in \R^d: |y-x|<r\}$. If $x=0$, we simply write $B_r$ for $B_r(0)$. For $0<a<b$, we denote $A_{a,b}=\{x \in \R^d: a<|x|<b\}$. \medskip

\section{Hardy-Gagliardo-Nirenberg inequality and ground state solutions} \label{sec:HGN}
\subsection{Hardy-Gagliardo-Nirenberg inequality}
We start this section by recalling  the Hardy-Littlewood-Sobolev inequality (see e.g.  \cite[Theorem 4.3]{LieLos-01}).

\begin{lemma}[Hardy-Littlewood-Sobolev inequality]  Let $d \geq 1$, $0< \alpha < d$ and $q,r \in (1,\infty)$ such that $\frac{1}{q} + \frac{1}{r} = 1 + \frac{\alpha}{d}$. Then there exists a positive constant $C=C(d,\alpha,q)$ such that for any $f \in L^q(\R^d)$ and $g \in L^r(\R^d)$, 
\begin{equation} \label{inequa:HLS-3}
\left| \int_{\R^d} (I_\alpha * f)g \, dx \right| \leq C \| f \|_{L^q} \| g \|_{L^r}.
\end{equation}

\end{lemma}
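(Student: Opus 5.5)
The proof of the Hardy--Littlewood--Sobolev inequality \eqref{inequa:HLS-3} reduces, via the explicit form of $I_\alpha$, to bounding
$$\int_{\R^d}\int_{\R^d} \frac{|f(x)||g(y)|}{|x-y|^{d-\alpha}}\,dx\,dy \le C\|f\|_{L^q}\|g\|_{L^r}.$$
Replacing $f,g$ by $|f|,|g|$ costs nothing, so we may assume $f,g\ge 0$. By Hölder's inequality it then suffices to show that the map $f\mapsto I_\alpha*f$ is bounded from $L^q$ to $L^{r'}$, where $\frac{1}{r'}=1-\frac1r=\frac1q-\frac{\alpha}{d}$. Since $r>1$, we have $r'<\infty$, and the relation $\frac1q+\frac1r=1+\frac\alpha d$ forces $1<q<\frac d\alpha$.

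For the $L^q\to L^{r'}$ bound we use Hedberg's pointwise argument. Fix $x$ and a radius $R>0$, and split the convolution into the parts $|x-y|<R$ and $|x-y|\ge R$.
\begin{itemize}
\item The near part is controlled by a dyadic decomposition into annuli: it is at most $C R^{\alpha} Mf(x)$, where $M$ is the Hardy--Littlewood maximal function.
\item The far part is handled by Hölder's inequality. Because $q<d/\alpha$, we have $(d-\alpha)q'>d$, so $|y|^{-(d-\alpha)}\1_{|y|\ge R}\in L^{q'}$ with norm $C R^{\alpha-d/q}$. Hence the far part is at most $C R^{\alpha-d/q}\|f\|_{L^q}$.
\end{itemize}
Optimizing in $R$, we choose $R=(\|f\|_{L^q}/Mf(x))^{q/d}$, which gives
$$I_\alpha*f(x)\le C\, Mf(x)^{1-\alpha q/d}\,\|f\|_{L^q}^{\alpha q/d}.$$
Raising to the power $r'$ and noting that $r'(1-\alpha q/d)=q$, the maximal theorem ($M$ bounded on $L^q$ for $q>1$) yields
$$\|I_\alpha*f\|_{L^{r'}}\le C\|f\|_{L^q}.$$

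The only delicate point is the far-field estimate. It is exactly where the hypothesis $q<d/\alpha$ enters, and that hypothesis is guaranteed by $r>1$. Everything else is routine, and since this is a classical result we could simply cite \cite[Theorem 4.3]{LieLos-01}.
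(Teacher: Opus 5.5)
Your proof is correct. The exponent bookkeeping checks out: $q<d/\alpha$ because $r>1$, $\frac{1}{r'}=\frac1q-\frac{\alpha}{d}$, $r'(1-\frac{\alpha q}{d})=q$, and $\frac{\alpha q r'}{d}+q=r'$. The dyadic annuli give the near-field bound $CR^{\alpha}Mf(x)$ because $\alpha>0$. The tail $|y|^{-(d-\alpha)}\1_{\{|y|\ge R\}}$ lies in $L^{q'}$ with norm $CR^{\alpha-d/q}$ precisely because $q<d/\alpha$. The choice of $R$ is legitimate wherever $0<Mf(x)<\infty$, which holds a.e.\ when $f\not\equiv 0$.

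The paper gives no proof of its own here; it simply quotes \cite[Theorem 4.3]{LieLos-01}, and the argument there takes a genuinely different route. Lieb and Loss never pass through the operator $f\mapsto I_\alpha*f$. After normalizing $\|f\|_{L^q}=\|g\|_{L^r}=1$ with $f,g\ge 0$, they use the layer-cake representation to write $f$, $g$ and the kernel $|x|^{-(d-\alpha)}$ as superpositions of indicator functions of level sets (balls, in the case of the kernel). They bound each elementary term $\iint \1_{\{f>a\}}(x)\1_{\{|x-y|<c\}}\1_{\{g>b\}}(y)\,dx\,dy$ by the smallest pairwise product of the measures of the three sets, and then integrate over $(a,b,c)$. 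That argument is completely elementary, with no covering lemma and no maximal theorem. It treats the bilinear form symmetrically in $f$ and $g$ and produces an explicit constant. The sharp constant in the diagonal case $q=r=\frac{2d}{d+\alpha}$, the case used most often in this paper, is then identified there by a separate symmetrization argument.

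Your Hedberg-type proof leans on the Hardy--Littlewood maximal theorem instead, but in return it gives more than the statement asks for. You get the mapping property $I_\alpha*\colon L^q\to L^{r'}$ (the Sobolev inequality for Riesz potentials) and the pointwise bound $I_\alpha*|f|\le C\,(Mf)^{1-\alpha q/d}\|f\|_{L^q}^{\alpha q/d}$. The bilinear inequality then follows from a single application of H\"older's inequality.
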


Let $\cQ_{\mu_0,\rm rad}$ be the space of radial functions in $\cQ_{\mu_0}$. We quote the following compact embedding from \cite[Page 4999]{TraZog-15}.
\begin{lemma} \label{lem:radial-compact}
	For any $2<q<2^*$, the embedding $\cQ_{\mu_0,\rm rad} \hookrightarrow L^q(\R^d)$ is compact.
\end{lemma}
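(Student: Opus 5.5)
The plan is to use the ground state representation. For a radial $u \in \cQ_{\mu_0,\rm rad}$ set $v(x)=|x|^{\frac{d-2}{2}}u(x)$. For smooth compactly supported $u$ away from the origin, integration by parts gives
$$\|\sqrt{\cL_{\mu_0}}u\|_{L^2}^2=\int_{\R^d}|\nabla v|^2|x|^{-(d-2)}\,dx .$$
Boundedness of a sequence $(u_n)$ in $\cQ_{\mu_0,\rm rad}$ therefore controls $\int |v_n'(r)|^2 r\,dr$, and also $\|u_n\|_{L^2}$. The argument combines two estimates, a pointwise radial decay at infinity and uniform smallness near the origin, with local compactness on annuli.

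\textbf{Step 1 (local compactness away from $0$ and $\infty$).} On any annulus $A_{a,b}$ the weight $|x|^{-2}$ is bounded. Also, $\int_{A_{a,b}}|\nabla u|^2$ is controlled by the weighted gradient of $v$ plus $\|u\|_{L^2}$, since $\nabla u=|x|^{-\frac{d-2}{2}}\nabla v-\frac{d-2}{2}|x|^{-\frac{d}{2}}\frac{x}{|x|}\,v$ and the second term is bounded by $C_{a}|u|$ on the annulus. Hence $(u_n)$ is bounded in $H^1(A_{a,b})$, and Rellich gives compactness in $L^q(A_{a,b})$ for all $q<2^*$.

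\textbf{Step 2 (tail at infinity).} For radial $u$, I use the Strauss-type bound
$$|v(r)|^2\lesssim \int_r^\infty |v||v'|\,ds .$$
Combined with Cauchy--Schwarz against the weights $s^{1-d}\cdot s^{d-1}$, this yields $|u(r)|\lesssim r^{-\frac{d-1}{2}}\|u\|_{L^2}^{1/2}\|\sqrt{\cL_{\mu_0}}u\|_{L^2}^{1/2}$ for $r\ge1$. Then for $q>2$,
$$\int_{|x|>R}|u|^q\le \|u\|_{L^\infty(B_R^c)}^{q-2}\|u\|_{L^2}^2\lesssim R^{-\frac{(d-1)(q-2)}{2}},$$
which is uniformly small in $n$. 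This is exactly where $q>2$ is needed.

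\textbf{Step 3 (near the origin).} I use the embedding \eqref{eq:Q-in-Lp}: $\cQ_{\mu_0}\hookrightarrow L^{s}$ for some $s\in(q,2^*)$. Hölder then gives
$$\int_{B_\varepsilon}|u_n|^q\le |B_\varepsilon|^{1-q/s}\|u_n\|_{L^s}^q\to0$$
uniformly in $n$ as $\varepsilon\to0$.

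\textbf{Conclusion.} Take a bounded sequence and extract a subsequence converging weakly in $\cQ_{\mu_0}$ to some $u$, which is radial. By Step 1 and a diagonal argument, it converges in $L^q(A_{1/k,k})$ for every $k$. Steps 2 and 3 then give $u_n\to u$ in $L^q(\R^d)$.

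The main obstacle is Step 3, if one tries to do it without \eqref{eq:Q-in-Lp}: $\cQ_{\mu_0}\not\subset H^1$, so a Sobolev argument near $0$ is unavailable. As a fallback, I would estimate directly in the $v$ variable with the one-dimensional bound $|v(r)|\lesssim \|v\|\,|\ln r|^{1/2}$. This gives $|u|\lesssim r^{-\frac{d-2}{2}}|\ln r|^{1/2}$, which lies in $L^q(B_1)$ with small norm on $B_\varepsilon$ precisely because $q<2^*$.
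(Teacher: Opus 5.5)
Your proof is correct. The paper does not actually prove this lemma; it quotes it from \cite[Page 4999]{TraZog-15}. So your argument is a self-contained substitute for a citation rather than a variant of something in the text.

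What your route buys is transparency about the two endpoint restrictions. Radiality and $q>2$ enter only in the tail estimate of Step 2. Compactness genuinely fails at $q=2$: the dilations $\lambda^{d/2}u(\lambda x)$ with $\lambda\to 0$ are radial, bounded in $\cQ_{\mu_0}$, tend weakly to $0$, and keep their $L^2$ norm. The restriction $q<2^*$ enters only at the origin. The sole nonelementary input is the embedding \eqref{eq:Q-in-Lp} from \cite{Fra-09}, and your fallback removes even that. From $|v(r)|\lesssim |v(1)|+|\ln r|^{1/2}\|\sqrt{\cL_{\mu_0}}u\|_{L^2}$ and the Step 2 bound on $|v(1)|$ you get $|u(x)|\lesssim |x|^{-\frac{d-2}{2}}(1+|\ln|x||)^{1/2}\|u\|_{\cQ_{\mu_0}}$ on $B_1$. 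This is uniformly small in $L^q(B_\varepsilon)$ exactly when $q<2^*$, which makes the whole proof elementary.

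There are two small points of presentation in Step 2.

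First, the weight split you describe is off. In the $v$ variable, both $\|u\|_{L^2}^2$ and $\|\sqrt{\cL_{\mu_0}}u\|_{L^2}^2$ equal $|\mathbb{S}^{d-1}|\int_0^\infty(\cdot)^2\, s\,ds$, applied to $v$ and $v'$ respectively. So the natural Cauchy--Schwarz split of $|v||v'|$ is $s^{1/2}\cdot s^{1/2}\cdot s^{-1}$, and this gives $|u(r)|\lesssim r^{-\frac{d-1}{2}}\|u\|_{L^2}^{1/2}\|\sqrt{\cL_{\mu_0}}u\|_{L^2}^{1/2}$ for every $r>0$. The split $s^{1-d}\cdot s^{d-1}$ only works if applied to $u$ itself on $\{r\ge 1\}$, using the $H^1(B_1^c)$ control from Step 1; that version gives the full $\cQ_{\mu_0}$-norm on the right instead.

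Second, you should say explicitly how the Strauss bound reaches all of $\cQ_{\mu_0,\rm rad}$. It is proved for radial $C_c^\infty(\R^d\setminus\{0\})$ functions and extends by density, for instance by radially averaging the approximants. Alternatively, argue directly from $\int_0^\infty(|v|^2+|v'|^2)\,s\,ds<\infty$, which forces $|v(r)|\to 0$ as $r\to\infty$.
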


\begin{lemma} \label{lem:HGN} Let $d\ge 3$ and $\frac{d+\alpha}{d}<p<\frac{d+\alpha}{d-2}$. Then inequality \eqref{eq:CHGN-def} holds for all $u\in \cQ_{\mu_0}$. 
	Moreover, $\sC$ is attained by a positive radial non-increasing function $\tilde u \in \cQ_{\mu_0}$. In addition, $\tilde u$ can be represented by $\tilde u(x) = \nu_1 Q(\nu_2 x)$ for some $\nu_1,\nu_2>0$ and $Q \in \cQ_{\mu_0}$ is a nonnegative radial solution to equation \eqref{eq:EL-u}.
\end{lemma}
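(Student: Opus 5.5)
The proof follows Weinstein's variational strategy, with Lemma \ref{lem:radial-compact} supplying compactness in the radial class. The first step is the inequality itself: the point is that $\sC>0$. Set $q_0:=\frac{2dp}{d+\alpha}$. The assumption on $p$ gives $2<q_0<2^*$. By \eqref{inequa:HLS-3} with $q=r=\frac{2d}{d+\alpha}$,
\[
\|(I_\alpha*|u|^p)|u|^p\|_{L^1}\le C\|u\|_{L^{q_0}}^{2p}.
\]
Next we need a Gagliardo--Nirenberg-type bound $\|u\|_{L^{q_0}}\le C\|\sqrt{\cL_{\mu_0}}u\|_{L^2}^{\theta}\|u\|_{L^2}^{1-\theta}$. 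From \eqref{eq:Q-in-Lp} we have $\|u\|_{L^{q_0}}\le C(\|\sqrt{\cL_{\mu_0}}u\|_{L^2}+\|u\|_{L^2})$. Apply this to the dilation $u_\lambda(x)=u(\lambda x)$; the Hardy form scales like the Dirichlet integral, $\|\sqrt{\cL_{\mu_0}}u_\lambda\|_{L^2}^2=\lambda^{2-d}\|\sqrt{\cL_{\mu_0}}u\|_{L^2}^2$. Optimizing over $\lambda$ then yields the multiplicative form with exactly $\theta=\frac{d(q_0-2)}{2q_0}=\frac{dp-d-\alpha}{2p}$. Hence $\cW(u)\ge c>0$. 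By the same computation, $\cW$ is invariant under $u\mapsto zu(\lambda x)$ for $z\in\C\setminus\{0\}$ and $\lambda>0$.

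The second step is to reduce to radial nonnegative nonincreasing functions. Replacing $u$ by $|u|$ leaves the Choquard term unchanged and does not increase $\|\sqrt{\cL_{\mu_0}}u\|_{L^2}$, by the diamagnetic inequality for the quadratic form, which holds since the potential is real. Now pass to the symmetric decreasing rearrangement $u^*$. The $L^2$ norm is preserved. The Riesz rearrangement inequality increases the Choquard term. For the Hardy form, Pólya--Szegő gives $\|\nabla u^*\|\le\|\nabla u\|$ and Hardy--Littlewood gives $\int|x|^{-2}|u^*|^2\ge\int|x|^{-2}|u|^2$, but these go in the same direction and cannot simply be combined: the form is a difference and need not be finite separately. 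I would therefore argue by density. For $u\in C_c^\infty$ both terms are finite, so the two inequalities combine and give $\|\sqrt{\cL_{\mu_0}}u^*\|\le\|\sqrt{\cL_{\mu_0}}u\|$. For general $u\in\cQ_{\mu_0}$, approximate in $\cQ_{\mu_0}$ and pass to the limit, using the continuity of rearrangement in $L^2$ and lower semicontinuity of the form. This is the delicate point of the reduction. Consequently the infimum may be taken over a minimizing sequence $u_n$ of nonnegative radial nonincreasing functions, normalized by scaling so that $\|u_n\|_{L^2}=\|\sqrt{\cL_{\mu_0}}u_n\|_{L^2}=1$.

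Then $u_n$ is bounded in $\cQ_{\mu_0,\rm rad}$. Extract a weak limit $\tilde u$. By Lemma \ref{lem:radial-compact}, $u_n\to\tilde u$ in $L^{q_0}$, so by HLS the Choquard term converges, to $\sC^{-2p}>0$. Hence $\tilde u\ne0$. Weak lower semicontinuity gives $\|\tilde u\|_{L^2}\le1$ and $\|\sqrt{\cL_{\mu_0}}\tilde u\|_{L^2}\le1$, so $\cW(\tilde u)\le\sC$. Therefore $\tilde u$ is a minimizer; it is radial, nonnegative and nonincreasing.

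The last step produces the equation. Compute the Euler--Lagrange equation by differentiating $\log\cW(\tilde u+\varepsilon\varphi)$ at $\varepsilon=0$. This gives
\[
\frac{\theta}{A}\cL_{\mu_0}\tilde u+\frac{1-\theta}{B}\tilde u=\frac{1}{N}(I_\alpha*\tilde u^p)\tilde u^{p-1},
\]
where $A,B,N$ are the squared seminorm, the squared mass and the Choquard integral of $\tilde u$. The ansatz $\tilde u(x)=\nu_1Q(\nu_2x)$ then gives $\cL_{\mu_0}Q+Q=(I_\alpha*Q^p)Q^{p-1}$ once we match $\nu_2^2=\frac{(1-\theta)A}{\theta B}$ and $\nu_1^{2p-2}$ to the remaining constant. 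This is solvable since $p>1$ and the Choquard term scales as $\nu_1^{2p-2}\nu_2^{-\alpha}$ relative to the rest.

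Positivity of $Q$ is stated only as nonnegativity here. If strict positivity is wanted, it would come from the strong maximum principle applied to the equation for $|x|^{\frac{d-2}{2}}Q$ on $\R^d\setminus\{0\}$. I expect the main obstacle to be the rearrangement step for the critical Hardy form, handled by the density argument above.
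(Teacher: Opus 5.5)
Your proposal is correct and follows essentially the same route as the paper: Weinstein's strategy applied to a rearranged minimizing sequence (smooth, then symmetric decreasing) normalized by scaling, the radial compact embedding of Lemma \ref{lem:radial-compact} with the Hardy--Littlewood--Sobolev inequality to pass to the limit in the Choquard term, weak lower semicontinuity of the two norms, and finally the Euler--Lagrange equation rescaled into \eqref{eq:EL-u}. Two of your steps make explicit what the paper leaves implicit: your justification of the rearrangement step for the critical Hardy form (P\'olya--Szeg\H{o} combined with the Hardy--Littlewood inequality for $|x|^{-2}$ on smooth functions, then density), and your maximum-principle argument for strict positivity of $\tilde u$, which the paper's proof also only establishes as nonnegativity.
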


\begin{proof}

We first prove the existence of an optimizer for \eqref{eq:CHGN-def}. By using the Hardy-Littlewood-Sobolev inequality \eqref{inequa:HLS-3}, the embedding \eqref{eq:Q-in-Lp} and the standard scaling argument, we can show that $\sC>0$. 

We will employ Weinstein's strategy \cite{Wei-83} to show the existence of a minimizer of $\sC$. Since $C_0^\infty(\R^d \setminus \{0\})$ is dense in $\cQ_{\mu_0}$, we may assume that $\{u_n\} \subset C_0^\infty(\R^d \setminus \{0\})$ is a minimizing sequence for $\sC$ in \eqref{eq:CHGN-def}. By the P\'olya--Szeg\"o rearrangement inequality and the Riesz rearrangement inequality (see \cite[Theorem 3.4 and Theorem 3.7]{LieLos-01} and \cite{PolSze-51,Burchard-09}) we can assume that the functions $u_n$'s are non-negative and radially symmetric decreasing. Moreover, by a scaling argument we may also assume that
\begin{equation} \label{norm=1}
\|u_n\|_{L^2}=\| \sqrt{\cL_{\mu_0}} u_n\|_{L^2} =1, \quad  \lim_{n \to \infty}\|(I_\alpha * |u_n|^p)|u_n|^p\|_{L^1}^{\frac{1}{2p}} = \sC^{-1}.
\end{equation}
This implies that $\{u_n\}$ is uniformly bounded in $\cQ_{\mu_0}$. 
  
Thanks to the compact embedding in Lemma \ref{lem:radial-compact}, for any $q \in (2,2^*)$, $u_n\to \tilde u$ strongly in  $L^q(\R^d)$  as $n \to \infty$.
In particular, since $2<\frac{2dp}{d+\alpha}<2^*$, $u_n\to \tilde u$ strongly in  $L^{\frac{2dp}{d+\alpha} }(\R^d)$  as  $n \to \infty$. Consequently, 
\begin{equation} \label{converp}|u_n|^p \to |\tilde u|^p \quad \text{ strongly in } L^{\frac{2d}{d+\alpha} }(\R^d) \text{ as } n \to \infty.
\end{equation} 
By the Hardy-Littlewood-Sobolev inequality and H\"older's inequality,
\begin{align*}
&\left| \int_{\R^d} (I_\alpha * |u_n|^p)|u_n|^p dx - \int_{\R^d} (I_\alpha * |\tilde u|^p)|\tilde u|^p dx \right| \\
&\lesssim \left( \int_{\R^d}|u_n|^{\frac{2dp}{d+\alpha} } dx \right)^{\frac{d+\alpha}{2d}} \left(  \int_{\R^d} ||u_n|^p - |\tilde u|^p|^{\frac{2d}{d+\alpha} }dx \right)^{\frac{d+\alpha}{2d} }  \\
&\quad + \left( \int_{\R^d}||u_n|^p - |\tilde u|^p|^{\frac{2d}{d+\alpha} } dx \right)^{\frac{d+\alpha}{2d}} \left(  \int_{\R^d} |\tilde u|^{\frac{2dp}{d+\alpha} }dx \right)^{\frac{d+\alpha}{2d} }. 
\end{align*}
Therefore, by using \eqref{converp}, we derive
\begin{equation} \label{convergeIa}
\|(I_\alpha * |\tilde u|^p)|\tilde u|^p \|_{L^1}^{\frac{1}{2p}}  = \lim_{n \to \infty}\|(I_\alpha * |u_n|^p)|u_n|^p \|_{L^1}^{\frac{1}{2p} } = \sC^{-1}.
\end{equation}

On the other hand, by \eqref{norm=1} and the Banach-Alaoglu theorem, up to a subsequence as $n\to \infty$ again, we can assume that 
$$u_n\wto \tilde u, \quad \sqrt{\cL_{\mu_0}} u_n \wto \sqrt{\cL_{\mu_0}} \tilde u \quad \text{ weakly in $L^2(\R^d)$}, $$
which lead to
\begin{equation} \label{limuL} \|\tilde u\|_{L^2}\le \liminf_{n\to \infty} \|u_n\|_{L^2}=1, \quad \|\sqrt{\cL_{\mu_0}} \tilde u\|_{L^2}\leq \liminf_{n\to \infty} \|\sqrt{\cL_{\mu_0}} u_n\|_{L^2}=1.
\end{equation}
Combining \eqref{convergeIa} and \eqref{limuL} yields
$$
\frac{ \|\sqrt{\cL_{\mu_0}}\tilde u\|_{L^2(\R^d)}^{\theta} \|\tilde u\|_{L^2(\R^d)}^{1-\theta} }{\|(I_\alpha * |\tilde u|^p)|\tilde u|^p\|_{L^1}^{\frac{1}{2p}}} \leq  \sC. 
$$
This implies that $\tilde u$ is a minimizer for the variational problem \eqref{eq:CHGN-def}. In view of the above proof, since the minimizing sequence $\{u_n\}$ are nonnegative radially symmetric decreasing, the limit $\tilde u$ is also nonnegative radially symmetric decreasing.

Next we derive the Euler-Lagrange equation. By using standard variational techniques and the constraints that
$$
\|\tilde u\|_{L^2} =\| \sqrt{\cL_{\mu_0}} \tilde u\|_{L^2} = 1,  \quad \|(I_\alpha * \tilde u^p)\tilde u^p \|_{L^1}^{\frac{1}{2p}}  = \sC^{-1},
$$
we can show that $\tilde u$ satisfies the Euler-Lagrange equation
\begin{equation} \label{eq:EL-non-normalized}
\theta \cL_{\mu_0} \tilde u + (1-\theta) \tilde u - \sC^{2p} (I_\alpha * \tilde u^p) \tilde u^{p-1}=0.
\end{equation}

For $\nu_1,\nu_2>0$, put 
$$\tilde u(x)=\nu_1 Q (\nu_2 x), \quad x \in \R^d \setminus \{0\},
$$
then it is easy to check that
$$ \cL_{\mu_0} \tilde u(x) =  \nu_1 \nu_2^2 \cL_{\mu_0} Q(\nu_2 x), \quad [(I_\alpha * \tilde u^p)\tilde u^{p-1}](x) = \nu_1^{2p-1}\nu_2^{-\alpha} [(I_\alpha * Q^p) Q^{p-1}](\nu_2 x).
$$
Plugging the above relations into \eqref{eq:EL-non-normalized} leads to
\begin{equation} \label{eq:Q-1}
\cL_{\mu_0}Q + \nu_2^{-2}\theta^{-1}(1-\theta) Q - \sC^{2p}\nu_1^{2p-2}  \nu_2^{-\alpha-2}  \theta^{-1}(I_\alpha * Q^p) Q^{p-1} = 0.
\end{equation}
By choosing
$$\nu_1= \theta^{-\frac{\alpha}{4(p-1)} } (1-\theta)^{\frac{\alpha+2}{4(p-1)} }\sC^{-\frac{p}{p-1} }   \quad \text{and} \quad  \nu_2 = \left( \frac{1-\theta}{\theta} \right)^{\frac{1}{2} },
$$
we infer from \eqref{eq:Q-1} that $Q$ solves \eqref{eq:EL-u}.
Moreover, $Q$ is nonnegative radially symmetric decreasing and 
\begin{equation} \label{eq:CHGN-Q}
\|Q\|_{L^2} =  \nu_1^{-1} \nu_2^{\frac{d}{2}} \|\tilde u\|_{L^2} = \theta^{\frac{\alpha-d(p-1)}{4(p-1)} } (1-\theta)^{\frac{d(p-1)-\alpha-2}{4(p-1)} }\sC^{\frac{p}{p-1} }.
\end{equation}
The proof is complete.
\end{proof}

\subsection{Pohoz\v aev identities}
We will establish Poho\v zaev identities by using the ground state representation and a techniques based on cutoff functions.  
\begin{proposition}  \label{prop:Pohozaev} 
Let $p>1$. Assume $w \in \cQ_{\mu_0} \cap  L^{\frac{2dp}{d+\alpha}}(\R^d) \cap W_{\mathrm{loc}}^{2,q}(\R^d \setminus \{0\})$, for some $q>1$, is a solution to equation \eqref{eq:EL-u} in the pointwise sense.
 Then the following identities hold
\begin{align} \label{eq:Pohozaev-1}
&\|w\|_{L^2}^2 = \frac{d+\alpha- (d-2)p}{2p} \| (I_\alpha*|w|^p)	|w|^p \|_{L^1} \\ \label{eq:Pohozaev-2}
&\| \sqrt{\cL_{\mu_0}}w\|_{L^2}^2 = \frac{dp-d-\alpha}{2p} \| (I_\alpha*|w|^p)	|w|^p \|_{L^1}.
\end{align}
\end{proposition}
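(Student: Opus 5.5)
The plan is to derive two independent integral identities, the Nehari identity and a Poho\v zaev-type virial identity, and then solve the resulting linear system for $\|w\|_{L^2}^2$ and $\|\sqrt{\cL_{\mu_0}}w\|_{L^2}^2$ in terms of $N:=\|(I_\alpha*|w|^p)|w|^p\|_{L^1}$. Testing \eqref{eq:EL-u} against $\bar w$ gives the Nehari identity
\begin{equation*}
\|\sqrt{\cL_{\mu_0}}w\|_{L^2}^2+\|w\|_{L^2}^2=N .
\end{equation*}
This is legitimate because $w\in\cQ_{\mu_0}$ and the right-hand side lies in $\cQ_{\mu_0}^*$: by Hardy--Littlewood--Sobolev \eqref{inequa:HLS-3} with $w\in L^{\frac{2dp}{d+\alpha}}$, $(I_\alpha*|w|^p)|w|^{p-2}w$ pairs with $\bar w$. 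The Poho\v zaev identity should read
\begin{equation*}
\frac{d-2}{2}\|\sqrt{\cL_{\mu_0}}w\|_{L^2}^2+\frac d2\|w\|_{L^2}^2=\frac{d+\alpha}{2p}N .
\end{equation*}
Formally it comes from the dilation $w_\lambda(x)=w(\lambda x)$, or from multiplying by $x\cdot\nabla\bar w$. It uses that the quadratic form of $\cL_{\mu_0}$ scales exactly like that of $-\Delta$. Solving the two equations gives $\|w\|_{L^2}^2=\frac{d+\alpha-(d-2)p}{2p}N$ and $\|\sqrt{\cL_{\mu_0}}w\|_{L^2}^2=\frac{dp-d-\alpha}{2p}N$, which are exactly \eqref{eq:Pohozaev-1}--\eqref{eq:Pohozaev-2}.

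To prove the Poho\v zaev identity rigorously I would not multiply by $x\cdot\nabla w$ directly, because $\int|\nabla w|^2$ and $\int|x|^{-2}|w|^2$ may each be infinite. Instead I would pass to the ground state representation $v=|x|^{\frac{d-2}{2}}w$. Then $\|\sqrt{\cL_{\mu_0}}w\|_{L^2}^2=\int|x|^{-(d-2)}|\nabla v|^2dx$, and the equation becomes
\begin{equation*}
-\mathrm{div}(|x|^{-(d-2)}\nabla v)+|x|^{-(d-2)}v=|x|^{-\frac{d-2}{2}}(I_\alpha*|w|^p)|w|^{p-2}w .
\end{equation*}
I would multiply this by $\varphi_{\varepsilon,R}\,x\cdot\nabla\bar v$ and take real parts, where $\varphi_{\varepsilon,R}$ is a cutoff vanishing near $0$ (on $B_\varepsilon$) and near infinity (outside $B_{2R}$). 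This is justified since $w\in W^{2,q}_{\rm loc}(\R^d\setminus\{0\})$. Integrating by parts, the weighted kinetic term yields $\frac{d-2}{2}\int\varphi|x|^{-(d-2)}|\nabla v|^2$; the weight contributes $+(d-2)/2$ times the integrand, balancing the $-d/2$ from $\mathrm{div}\,x$. The mass term yields $-\frac{2}{2}\int\varphi|x|^{-(d-2)}|v|^2$ in $v$-variables, which combines into $\frac d2\int\varphi|w|^2$ after rewriting. Every remaining term contains $x\cdot\nabla\varphi$, which is bounded and supported in the annuli $A_{\varepsilon,2\varepsilon}$ and $A_{R,2R}$. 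These error terms tend to $0$ by dominated convergence, since $|x|^{-(d-2)}|\nabla v|^2$, $|w|^2$ and $(I_\alpha*|w|^p)|w|^p$ are all in $L^1$.

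The nonlinear term is the main obstacle. Rewritten in $w$, it is $\mathrm{Re}\int\varphi(I_\alpha*|w|^p)|w|^{p-2}w\,(x\cdot\nabla\bar w+\frac{d-2}{2}\bar w)$. The $\frac{d-2}{2}$ part gives $\frac{d-2}{2}N$ in the limit. For the other part I would use $\mathrm{Re}(|w|^{p-2}w\,x\cdot\nabla\bar w)=\frac1p x\cdot\nabla|w|^p$ and symmetrize the double integral in $(x,y)$. Together with the identity $(x\cdot\nabla_x+y\cdot\nabla_y)|x-y|^{-(d-\alpha)}=-(d-\alpha)|x-y|^{-(d-\alpha)}$, this gives $-\frac{d+\alpha}{2p}N$ up to cutoff errors. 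The cutoff errors include cross terms such as $\iint(\varphi(x)-\varphi(y))\cdots$. I would control them by splitting the domain, using Hardy--Littlewood--Sobolev together with the integrability of $|w|^{\frac{2dp}{d+\alpha}}$ and of $|x||\nabla|w|^p|$ on annuli. It may be easier first to integrate by parts so that the derivative falls on $\varphi$ and on the kernel, and only then pass to the limit. Once this limit is handled, the Poho\v zaev identity follows, and combining it with the Nehari identity concludes the proof.
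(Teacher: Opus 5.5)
Your strategy is the paper's: pass to $v=|x|^{\frac{d-2}{2}}w$, test the weighted equation against a cutoff times $x\cdot\nabla\bar v$, remove the cutoffs by dominated convergence, and use the Nehari identity. Your limit of the Choquard term, $\frac{d-2}{2}N-\frac{d+\alpha}{2p}N$, is correct. The linear terms, however, are miscounted, and as written the steps do not produce the identity you assert.

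Since $\mathrm{div}(|x|^{-(d-2)}x)=2|x|^{-(d-2)}$, one has
\[
\mathrm{Re}\int_{\R^d}|x|^{-(d-2)}\varphi\,\nabla v\cdot\nabla(x\cdot\nabla\bar v)\,dx=\Bigl(1-\frac d2+\frac{d-2}{2}\Bigr)\int_{\R^d}\varphi|x|^{-(d-2)}|\nabla v|^2dx-\frac12\int_{\R^d}|x|^{-(d-2)}(x\cdot\nabla\varphi)|\nabla v|^2dx .
\]
The leading coefficient is $0$, not $\frac{d-2}{2}$, so the weighted kinetic term disappears in the limit. The mass term gives $-\int\varphi|x|^{-(d-2)}|v|^2dx=-\int\varphi|w|^2dx$, and no rewriting turns this into $\frac d2\int\varphi|w|^2dx$.

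This is forced by an observation you made only for the nonlinear term. Because $x\cdot\nabla\bar v=|x|^{\frac{d-2}{2}}\bigl(x\cdot\nabla\bar w+\frac{d-2}{2}\bar w\bigr)$, your multiplier is, for every term, the dilation multiplier plus $\frac{d-2}{2}$ times the Nehari multiplier. So the linear side equals $\bigl(-\frac{d-2}{2}+\frac{d-2}{2}\bigr)\|\sqrt{\cL_{\mu_0}}w\|_{L^2}^2+\bigl(-\frac d2+\frac{d-2}{2}\bigr)\|w\|_{L^2}^2$. Your stated linear contributions, combined with your nonlinear limit, do not add up to $\frac{d-2}{2}\|\sqrt{\cL_{\mu_0}}w\|_{L^2}^2+\frac d2\|w\|_{L^2}^2=\frac{d+\alpha}{2p}N$.

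With the correct bookkeeping the argument closes at once, and it is then exactly the paper's proof. The multiplier identity gives in the limit
\[
-\|w\|_{L^2}^2=\frac{(d-2)p-d-\alpha}{2p}\,N,
\]
which is \eqref{eq:Pohozaev-1} directly, so no linear system is needed. The Nehari identity is used only to deduce \eqref{eq:Pohozaev-2}, and your dilation identity is recovered by subtracting $\frac{d-2}{2}$ times Nehari.

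Also, the cutoff errors in the Choquard term need no domain splitting. The map $x\mapsto\varphi(x)x$ is Lipschitz uniformly in the cutoff parameters, so $|(\varphi(x)x-\varphi(y)y)\cdot(x-y)|\,|x-y|^{-(d-\alpha+2)}\lesssim|x-y|^{-(d-\alpha)}$. Hardy--Littlewood--Sobolev then provides the dominating function.
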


\begin{proof}
Put $v(x)=|x|^{\frac{d-2}{2}}w(x)$ for $x \neq 0$. Then $v$ satisfies
\begin{equation} \label{eq:transform-v} 
- \mathrm{div}(|x|^{-(d-2)}\nabla v) +|x|^{-(d-2)}v = |x|^{-\frac{(d-2)p}{2}}[I_\alpha*(|\cdot|^{-\frac{(d-2)p}{2}}|v|^p)]|v|^{p-2}v 
\end{equation}
a.e. in $\R^d \setminus \{0\}$.
Since $w \in \cQ_{\mu_0}$, it follows that $v \in H^1(\R^d;|x|^{-(d-2)})$ and
\begin{align} \label{v-L2}
\int_{\R^d} |x|^{-(d-2)}v^2 dx &= \int_{\R^d} w^2 dx, 
\int_{\R^d} |x|^{-(d-2)}|\nabla v|^2 dx &= \int_{\R^d} \left( |\nabla w|^2 - \frac{\mu_0}{|x|^2}w^2\right)dx.	
\end{align}

Let $\phi \in C_c^\infty(\R^d)$ such that $0 \leq \phi \leq 1$, $\phi =1$ in $B_1$, $\mathrm{supp} \, \phi \subset B_2$ and let $\varphi \in C^\infty(\R^d)$ such that $0 \leq \varphi \leq 1$, $\varphi =1$ in $B_2^c$ and $\mathrm{supp} \, \varphi \subset B_1^c$. For any $k \in \N$ and $\epsilon >0$, put 
$$\phi_k(x) = \phi_k(\frac{x}{k}), \quad \varphi_\epsilon(x) = \varphi(\frac{x}{\epsilon}).$$ 
Multiplying equation \eqref{eq:transform-v} with $\phi_k \varphi_{\epsilon}(\nabla v \cdot x)$ and integrating over $\R^d$ yield
\begin{equation} \label{eq:multi-v-1} \begin{aligned}
&-\int_{\R^d}\mathrm{div}(|x|^{-(d-2)}\nabla v(x)) \phi_k \varphi_{\epsilon} (\nabla v \cdot x)dx +\int_{\R^d}(|x|^{-(d-2)}v) \phi_k \varphi_{\epsilon} (\nabla v \cdot x)dx 
\\\quad &= \int_{\R^d}|x|^{-\frac{(d-2)p}{2}}[I_\alpha*(|\cdot|^{-\frac{(d-2)p}{2}}|v(\cdot)|^p)](|v|^{p-2}v) \phi_k \varphi_{\epsilon}	(\nabla v \cdot x)dx.	
\end{aligned}
\end{equation}
We will estimate the terms in \eqref{eq:multi-v-1} successively. \medskip

\noindent \textbf{Step 1:} We deal with the first term in the left hand side of \eqref{eq:multi-v-1}.

By using integration by parts and straightforward computations, we obtain
\begin{equation} \label{eq:Poho-1a} \begin{aligned}
&- \int_{\R^d}\mathrm{div}(|x|^{-(d-2)}\nabla v) \phi_k \varphi_{\epsilon} (\nabla v \cdot x)dx \\
&= \int_{\R^d} |x|^{-(d-2)} \phi_k \varphi_{\epsilon} \nabla v \cdot  \nabla (\nabla v \cdot x)dx + \int_{\R^d} |x|^{-(d-2)} \phi_k \nabla \varphi_{\epsilon} \cdot \nabla v (\nabla v \cdot x)dx\\
&\quad + \int_{\R^d} |x|^{-(d-2)}  \varphi_{\epsilon} \nabla \phi_k \cdot  \nabla v (\nabla v \cdot x) dx.
\end{aligned} \end{equation} 
 We employ  the identities $\nabla v \cdot \nabla (\nabla v \cdot x) = \frac{1}{2}\nabla |\nabla v|^2\cdot x + |\nabla v|^2$ and $\operatorname{div} (|x|^{-(d-2)}x) = 2|x|^{-(d-2)}$, together with  integration by parts,  to derive
\begin{align*}
\int_{\R^d} |x|^{-(d-2)} \phi_k \varphi_{\epsilon} \nabla v \cdot \nabla (\nabla v \cdot x)dx &=	- \frac{1}{2}\int_{\R^d} |x|^{-(d-2)}(\nabla \phi_k \cdot x) \varphi_{\epsilon} |\nabla v|^2 dx \\
&\quad - \frac{1}{2}\int_{\R^d} |x|^{-(d-2)} \phi_k (\nabla \varphi_{\epsilon}\cdot x) |\nabla v|^2 dx, 
\end{align*}
which, combined with \eqref{eq:Poho-1a}, implies
\begin{equation} \label{eq:Poho-1} \begin{aligned}
		&- \int_{\R^d}\mathrm{div}(|x|^{-(d-2)}\nabla v) \phi_k \varphi_{\epsilon} (\nabla v \cdot x)dx \\
		&= - \frac{1}{2} \int_{A_{k,2k}} |x|^{-(d-2)}(\nabla \phi_k \cdot x) \varphi_{\epsilon} |\nabla v|^2 dx +  \int_{A_{k,2k}} |x|^{-(d-2)} \varphi_{\epsilon} \nabla \phi_k \cdot \nabla v (\nabla v \cdot x) dx\\ 
		&\quad - \frac{1}{2}\int_{A_{\epsilon,2\epsilon}} |x|^{-(d-2)} \phi_k (\nabla \varphi_{\epsilon}\cdot x) |\nabla v|^2 dx
		 + \int_{A_{\epsilon,2\epsilon}} |x|^{-(d-2)} \phi_k \nabla \varphi_{\epsilon} \cdot \nabla v (\nabla v \cdot x) dx.
\end{aligned} \end{equation}
By the dominated convergence theorem, together with \eqref{v-L2} and estimate $|\nabla \phi_k|\lesssim k^{-1}$ and $|\nabla \varphi_{\epsilon}| \lesssim \epsilon^{-1}$, we can let $k \to \infty$ and $\epsilon \to 0$ in \eqref{eq:Poho-1} to derive
\begin{equation} \label{eq:Poho-2}
\lim_{\epsilon \to 0, \, k \to \infty}\int_{\R^d}\mathrm{div}(|x|^{-(d-2)}\nabla v) \phi_k \varphi_{\epsilon} (\nabla v \cdot x)dx = 0.
\end{equation} 
\textbf{Step 2:} We deal with the second term in the left hand side of \eqref{eq:multi-v-1}. By integration by parts, we have
\begin{equation} \label{eq:Poho-3} \begin{aligned}
&\int_{\R^d} |x|^{-(d-2)}v \phi_k \varphi_{\epsilon} (\nabla v \cdot x)dx \\ 
&= - \int_{\R^d} |x|^{-(d-2)} \phi_k \varphi_{\epsilon} v^2 dx - \frac{1}{2} \int_{A_{k,2k}}|x|^{-(d-2)} (\nabla \phi_k \cdot x)\varphi_{\epsilon} v^2 dx \\
&\quad - \frac{1}{2} \int_{A_{\epsilon,2\epsilon}}|x|^{-(d-2)} \phi_k (\nabla \varphi_{\epsilon} \cdot x) v^2 dx,
\end{aligned}\end{equation}
where $A_{a,b}=\{x \in \R^d: a<|x|<b\}$.
By the dominated convergence theorem and \eqref{v-L2}, we can let $k \to \infty$ and $\epsilon \to 0$ in \eqref{eq:Poho-3} to derive
\begin{equation} \label{eq:Poho-4}
	\lim_{\epsilon \to 0, \, k \to \infty}\int_{\R^d} |x|^{-(d-2)}v \phi_k \varphi_{\epsilon} (\nabla v \cdot x)dx  = 	- \int_{\R^d} |x|^{-(d-2)}v^2 dx.
\end{equation} 
\textbf{Step 3:} We deal with the term in the right hand side of \eqref{eq:multi-v-1}. By integration by parts and using the identities $$\mathrm{div}(|x|^{-\frac{(d-2)p}{2}}x) = \frac{2d-(d-2)p}{2}|x|^{-\frac{(d-2)p}{2}} \quad \text{and} \quad \nabla_x I_\alpha(x-y)= \frac{-(d-\alpha)(x-y)}{|x-y|^{d-\alpha+2}}, 
$$
we obtain
\begin{align*}  
&\int_{\R^d}|x|^{-\frac{(d-2)p}{2}}[I_\alpha*(|\cdot|^{-\frac{(d-2)p}{2}}|v(\cdot)|^p)](x)(|v(x)|^{p-2}v(x))	\phi_k(x) \varphi_{\epsilon}(x) (\nabla v(x) \cdot x)dx \\ 
&= \frac{1}{2p} \int_{\R^d}|x|^{-\frac{(d-2)p}{2}}\left(\int_{\R^d} I_\alpha(x-y)|y|^{-\frac{(d-2)p}{2}}|v(y)|^p dy\right) \phi_k(x) \varphi_{\epsilon}(x)	\nabla_x (|v(x)|^p) \cdot x \, dx \\
&\quad + \frac{1}{2p} \int_{\R^d}|y|^{-\frac{(d-2)p}{2}}\left(\int_{\R^d} I_\alpha(x-y)|x|^{-\frac{(d-2)p}{2}}|v(x)|^p dx\right) \phi_k(y) \varphi_{\epsilon}(y)	\nabla_y (|v(y)|^p) \cdot y \, dy \\
&= \frac{(d-2)p-2d}{2p} \int_{\R^d}|x|^{-\frac{(d-2)p}{2}} [I_\alpha*(|\cdot|^{-\frac{(d-2)p}{2}}|v(\cdot)|^p](x) \phi_k(x) \varphi_{\epsilon}(x)	|v(x)|^p  dx \\ 
&\quad +\frac{d-\alpha}{2p}\iint_{\R^d \times \R^d}|x|^{-\frac{(d-2)p}{2}}|v(x)|^p |y|^{-\frac{(d-2)p}{2}}|v(y)|^p \frac{(x-y)(x \phi_k(x) \varphi_{\epsilon}(x) - y \phi_k(y) \varphi_{\epsilon}(y)) }{|x-y|^{d-\alpha+2}}dxdy \\ 
&\quad  - \frac{1}{p} \int_{\R^d}|x|^{-\frac{(d-2)p}{2}} [I_\alpha*(|\cdot|^{-\frac{(d-2)p}{2}}|v(\cdot)|^p](x) \phi_k(x) (\nabla \varphi_{\epsilon}(x) \cdot x ) |v(x)|^p dx \\ 
&\quad- \frac{1}{p} \int_{\R^d}|x|^{-\frac{(d-2)p}{2}} [I_\alpha*(|\cdot|^{-\frac{(d-2)p}{2}}|v(\cdot)|^p](x) (\nabla \phi_k(x) \cdot x)  \varphi_{\epsilon}(x) |v(x)|^p dx.
\end{align*}
Therefore, by the dominated convergence theorem, we obtain
\begin{equation} \label{eq:Poho-5} \begin{aligned}
&\lim_{\epsilon \to 0,\, k \to \infty}\int_{\R^d}|x|^{-\frac{(d-2)p}{2}}[I_\alpha*(|\cdot|^{-\frac{(d-2)p}{2}}|v(\cdot)|^p)](|v|^{p-2}v)	\phi_k \varphi_{\epsilon} (\nabla v \cdot x)dx \\
&=	\frac{(d-2)p-d-\alpha}{2p} \int_{\R^d}|x|^{-\frac{(d-2)p}{2}} [I_\alpha*(|\cdot|^{-\frac{(d-2)p}{2}}|v(\cdot)|^p](x)	|v(x)|^p  dx.
\end{aligned} \end{equation}

Letting $k \to \infty$ and $\epsilon \to 0$ successively in \eqref{eq:Poho-1} and taking into account \eqref{eq:Poho-2}, \eqref{eq:Poho-4} and \eqref{eq:Poho-5}, we deduce
$$ - \int_{\R^d} |x|^{-(d-2)}v^2 dx = \frac{(d-2)p-d-\alpha}{2p} \int_{\R^d}|x|^{-\frac{(d-2)p}{2}} [I_\alpha*(|\cdot|^{-\frac{(d-2)p}{2}}|v(\cdot)|^p]	|v|^p  dx.
$$
This implies \eqref{eq:Pohozaev-1} due to the relation $w(x) = |x|^{-\frac{d-2}{2}}v(x)$ for $x \neq 0$. 

Next, by multiplying equation \eqref{eq:EL-u} by $w$ and integrating over $\R^d$, we get
\begin{equation} \label{eq:Poho-6}
\int_{\R^d} \left(|\nabla w|^2 - \frac{\mu_0}{|x|^2} w^2 \right)dx + \int_{\R^d}w^2 dx = \int_{\R^d} (I_\alpha*|w|^p)	|w|^p  dx.	
\end{equation}
Combining \eqref{eq:Pohozaev-1} and \eqref{eq:Poho-6} yields \eqref{eq:Pohozaev-2}. The proof is complete.
\end{proof}

\subsection{Blow-up and decay estimates}

The Moser iteration is a standard tool for establishing $L^\infty$ regularity for positive solutions to a wide class of elliptic equations. In the presence of a Hardy potential, we employ a ground state representation to transform \eqref{eq:EL-u} into a divergence-form equation with singular coefficients, to which the Moser iteration can be applied. As a result, we obtain an upper bound for solutions to \eqref{eq:EL-u} on $\R^d \setminus \{0\}$, which is sharp near the origin. This strategy was used in e.g. \cite{DMPS-16}.

\begin{proposition} \label{prop:asymp-phi0-near0} Assume conditions in  \eqref{assump:p>2} hold. Let $w \in \cQ_{\mu_0}$ be a positive solution to \eqref{eq:EL-u}. Then $w \in C^2(\R^d \setminus \{0\})$ and
	\begin{align} \label{lim0-phi0-1}
		w(x) \leq C|x|^{-\frac{d-2}{2}} \quad \forall x \in \R^d \setminus \{0\}.
	\end{align}	 
\end{proposition}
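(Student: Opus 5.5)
We pass to the ground state representation $v(x)=|x|^{\frac{d-2}{2}}w(x)$. By the proof of Proposition \ref{prop:Pohozaev}, $v$ solves \eqref{eq:transform-v} in the weighted space $H^1(\R^d;|x|^{-(d-2)})$. The estimate \eqref{lim0-phi0-1} is then equivalent to $v\in L^\infty(\R^d)$. We view \eqref{eq:transform-v} as a linear equation
\[
-\mathrm{div}(|x|^{-(d-2)}\nabla v)+|x|^{-(d-2)}v = |x|^{-(d-2)}V(x)\,v,\qquad V(x):=(I_\alpha*|w|^p)(x)\,w(x)^{p-2},
\]
which follows from $|x|^{-\frac{(d-2)p}{2}}|v|^{p-2}=|x|^{-(d-2)}w^{p-2}$. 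This is an equation with the doubling weight $\omega(x)=|x|^{-(d-2)}$, which is an $A_2$ weight, so a weighted Sobolev inequality holds: $\|\omega^{1/2} f\|_{L^{2\chi}(\omega)}$-type bounds $\big(\int \omega |f|^{2\chi}\big)^{1/\chi}\lesssim \int\omega|\nabla f|^2+\int \omega f^2$ for some $\chi>1$. A convenient way to see this is to undo the transform: with $f=|x|^{\frac{d-2}{2}}g$, the inequality follows from the Hardy–Sobolev type embedding $\cQ_{\mu_0}\hookrightarrow L^q$, $q<2^*$, in \eqref{eq:Q-in-Lp}. This gives the Sobolev gain needed for Moser iteration.

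\textbf{Moser iteration.} We test the equation with $v\min(v,L)^{2(\beta-1)}$, $\beta\ge1$, which is positive since $w>0$. After letting $L\to\infty$ we obtain
\[
\Big(\int \omega\, v^{2\beta\chi}\Big)^{1/\chi}\lesssim \beta^2\int \omega\, V v^{2\beta}.
\]
The key point is to control $V$ by a suitable integrability, namely $V\in L^{r}(\omega\,dx)$ (or a Kato-type smallness) with $r$ large enough to absorb it via Hölder and interpolation. Two ingredients are used:
\begin{itemize}
\item For $I_\alpha*|w|^p$: since $w\in L^{q}$ for all $q\in[2,2^*)$, we have $|w|^p\in L^{s}$ for $s\in[2/p,2^*/p)$. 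Under $p<\frac{d+\alpha}{d-2}$ and $\alpha>(d-4)_+$, HLS and the splitting $I_\alpha=I_\alpha\1_{B_1}+I_\alpha\1_{B_1^c}$ give $I_\alpha*|w|^p\in L^\infty$, or at least in $L^{t}$ for large $t$. A bootstrap is needed here, and this is where $\alpha>(d-4)_+$ enters.
\item The factor $w^{p-2}$ uses $p>2$, so that it is a nonnegative power of $w\in L^q$. Writing $w^{p-2}=\omega^{\frac{p-2}{2}}v^{p-2}$ in terms of $v$, the factor $V$ is subcritical relative to the weighted Sobolev exponent.
\end{itemize}
We then run the standard Brezis–Kato/Moser scheme: first show $v\in L^{q}(\omega)$ for every $q<\infty$ by splitting $V=V\1_{\{V>K\}}+V\1_{\{V\le K\}}$ with $\|V\1_{\{V>K\}}\|$ small, and then iterate with $\beta_j=\chi^j$ to get $\|v\|_{L^\infty}\lesssim \|v\|_{L^2(\omega)}$. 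The iteration can be done locally on balls $B_2(x_0)$ with constants independent of $x_0$, since away from the origin the weight is bounded above and below on unit balls and near the origin we use the global weighted inequality. This yields a uniform bound on $v$ in $\R^d$.

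\textbf{Regularity.} On $\R^d\setminus\{0\}$ the potential is bounded on compacts. Once $w$ is locally bounded there, $I_\alpha*|w|^p$ is Hölder continuous, because $|w|^p\in L^1\cap L^\infty_{\rm loc}$ away from $0$ and the singularity at $0$ is integrable since $w^p\lesssim |x|^{-\frac{(d-2)p}{2}}$ with $\frac{(d-2)p}{2}<d$. Standard elliptic $W^{2,q}$ and Schauder estimates for $-\Delta w=\mu_0|x|^{-2}w-w+(I_\alpha*w^p)w^{p-1}$ on compact subsets of $\R^d\setminus\{0\}$ then give $w\in C^{2}(\R^d\setminus\{0\})$, using that $w^{p-1}$ is Hölder because $p>2$.

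\textbf{Main obstacle.} The delicate step is the integrability of $V$ with respect to the weighted measure near the origin. There $w$ may behave like $|x|^{-\frac{d-2}{2}}$, so $V\sim |x|^{-\frac{(d-2)(p-2)}{2}}$ times a bounded factor. We must check that $V\in L^{r}(B_1,\omega\,dx)$ for some $r>\chi'$, the dual exponent of the Sobolev gain. This amounts to the inequality $(p-2)\frac{d-2}{2}<$ (admissible threshold), which should follow from $p<\frac{d+\alpha}{d-2}$ and $\alpha>d-4$. If this direct count fails, the first fallback is to start the iteration with a small-$L^{r}$ truncation argument, since only local smallness of $V$ in the weighted critical norm is needed.
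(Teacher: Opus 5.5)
Your proposal has two genuine gaps: the integrability of $V$ near the origin, which you flag but leave open, and the bound at infinity, which does not follow from your argument.

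\textbf{Near the origin.} In the form you state it, the required condition is false in part of the parameter range. The factor $I_\alpha*|w|^p$ is not bounded near $0$ in general. For the radial ground states, which satisfy $|x|^{\frac{d-2}{2}}w(x)\to c>0$, one has $I_\alpha*w^p\gtrsim|x|^{\alpha-\frac{(d-2)p}{2}}$, which is unbounded whenever $(d-2)p>2\alpha$. Hence $V\gtrsim|x|^{-b}$ with $b=(d-2)(p-1)-\alpha$, and $b\to 2$ as $p\to\frac{d+\alpha}{d-2}$. Since $\int_{B_\rho}\omega\,dx\sim\rho^2$, we have $|x|^{-b}\in L^r(B_1,\omega\,dx)$ iff $br<2$. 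But any Sobolev gain $\chi$ valid for functions supported in small balls away from $0$ satisfies $\chi\le\frac{d}{d-2}$, so $r>\chi'\ge\frac d2$ forces $b<\frac4d$. For $d=3$, $\alpha=1$, $p\in[\frac{10}{3},4)$, your criterion therefore fails even a posteriori. Checking it with the profile $w\sim|x|^{-\frac{d-2}{2}}$ would be circular anyway. What does work near the origin is to measure everything in unweighted norms of $g=|x|^{-\frac{d-2}{2}}\Phi(v)$:
$\|g\|_{L^q}^2\lesssim\|\Phi(v)\|_{H^1(\R^d;|x|^{-(d-2)})}^2\le\beta\int Vg^2$.
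Here $V\in L^{r_0}(\R^d)$ for some $r_0>\frac d2$ a priori, by HLS and H\"older, precisely because $p<\frac{d+\alpha}{d-2}$. The hypothesis $\alpha>(d-4)_+$ plays no role in any bootstrap; it only makes the range $2<p<\frac{d+\alpha}{d-2}$ nonempty.

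\textbf{At infinity.} This matters because for large $|x|$ the bound $w\le C|x|^{-\frac{d-2}{2}}$ is a decay statement, not a local boundedness statement. The global inequality $(\int\omega|f|^{2\chi})^{1/\chi}\lesssim\int\omega(|\nabla f|^2+f^2)$ with $\omega=|x|^{-(d-2)}$ on both sides is false. For $f=\phi(\cdot-Re_1)$ the right side is $\approx R^{-(d-2)}$ while the left side is $\approx R^{-(d-2)/\chi}$. Undoing the transform gives instead $(\int|x|^{-\frac{(d-2)q}{2}}|f|^q)^{2/q}\lesssim\int\omega(|\nabla f|^2+f^2)$. The weight on the left there is different and is comparable to $\omega$ only near $0$. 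Your fallback of iterating on balls $B_2(x_0)$ with $x_0$-independent constants does not close the gap. After normalizing the coefficient by $|x_0|^{d-2}$, it yields only
$\sup_{B_1(x_0)}v\lesssim\|v\|_{L^2(B_2(x_0))}\approx|x_0|^{\frac{d-2}{2}}\|w\|_{L^2(B_2(x_0))}$,
and nothing in your argument shows this is bounded.

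\textbf{How the paper avoids both problems.} It does not linearize. It keeps the Choquard structure and applies HLS with $w^p\in L^{\frac{2d}{d+\alpha}}$. It then splits into $\{v\le M\}$ and $\{v\ge M\}$ and absorbs the latter via H\"older. This produces a gain factor $\frac p2>1$ (this is where $p>2$ enters), with the same weight $|x|^{-\frac{(d-2)dp}{d+\alpha}}$ on both sides of the iteration inequality. The iteration is therefore global, and the weight disappears in the limit $\beta\to\infty$. The origin and infinity are handled at once, with no separate integrability requirement on $V$. Your regularity step away from the origin is fine once the bound is available.
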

\begin{proof}
	Put $v(x) = |x|^{\frac{d-2}{2}}w(x)$ for $x \in \R^d \setminus \{0\}$, then $v \in H^1(\R^d;|x|^{-(d-2)})$ satisfies \eqref{eq:transform-v}. 
	For $\beta \geq 1$ and $\lambda>0$, define
	\begin{equation}
		\Phi_{\beta,\lambda}(\ell) := 
		\left\{ \begin{aligned}
			&\ell^{\beta} \quad &&\text{if } 0 \leq \ell \leq \lambda, \\
			&\beta \lambda^{\beta-1}(\ell-\lambda) + \lambda^\beta &&\text{if } \ell > \lambda.
		\end{aligned} \right.	
	\end{equation}
	Note that $\Phi_{\beta,\lambda}$ is a Lipschitz (with Lipschitz constant $\beta \lambda^{\beta-1}$) convex function with $\Phi_{\beta,\lambda}(0) = 0$, hence $\Phi_{\beta,\lambda}(v) \in H^1(\R^d;|x|^{-(d-2)})$. Moreover,
	\begin{equation} \label{eq:nablaPhi} - \mathrm{div}(|\cdot|^{-(d-2)} \nabla \Phi_{\beta,\lambda}(v)) \leq - \Phi_{\beta,\lambda}'(v)   \mathrm{div}(|\cdot|^{-(d-2)} \nabla v)  \quad \text{weakly in } H^1(\R^d;|x|^{-(d-2)}).
	\end{equation}
	
Since $\cQ_{\mu_0} \hookrightarrow L^{\frac{2dp}{d+\alpha}}(\R^d)$ (due to $2<\frac{2dp}{d+\alpha}<2^*$), we derive
$$
\|  |x|^{-\frac{d-2}{2}}v\|_{L^{\frac{2dp}{d+\alpha}}} \leq C\| v \|_{H^1(\R^d;|x|^{-(d-2)})}, \quad \forall v \in H^1(\R^d;|x|^{-(d-2)}).
$$
Combining the above estimate with the inequalities $\Phi_{\beta,\lambda}'(v)v - \Phi_{\beta,\lambda}(v) \geq 0$, $\Phi_{\beta,\lambda}'(\ell)\ell \leq \beta \Phi_{\beta,\lambda}(\ell)$, the Hardy-Littlewood-Sobolev inequality in \eqref{inequa:HLS-3} and \eqref{eq:nablaPhi},
	we obtain
	\begin{align} \nonumber
		&\| |x|^{-\frac{d-2}{2}}\Phi_{\beta,\lambda}(v)\|_{L^{\frac{2dp}{d+\alpha}}}^2 \\ \nonumber
		&\leq C\int_{\R^d} (-\Phi_{\beta,\lambda}(v)\Phi_{\beta,\lambda}'(v) \mathrm{div}(|x|^{-(d-2)} \nabla v) + \Phi_{\beta,\lambda}(v)^2 |x|^{-(d-2)}) dx \\ \nonumber
		&\leq C\beta \int_{\R^d} |x|^{-\frac{(d-2)p}{2}} \Phi_{\beta,\lambda}^2(v)  (I_\alpha * (|\cdot|^{-\frac{d-2}{2}}v )^p)v^{p-2} dx \\ \nonumber
		&\leq C\beta \| |\cdot|^{-\frac{d-2}{2}}v)^p  \|_{L^{\frac{2d}{d+\alpha} }} \| |\cdot|^{-\frac{(d-2)p}{2}} \Phi_{\beta,\lambda}^2(v) v^{p-2} \|_{L^{\frac{2d}{d+\alpha} }} \\ \label{est:Phi-1}
		&\leq C\beta \|v \|_{H^1(\R^d;|x|^{-(d-2)})}^p  \| |\cdot|^{-\frac{(d-2)p}{2}} \Phi_{\beta,\lambda}^2(v) v^{p-2} \|_{L^{\frac{2d}{d+\alpha} }}.
	\end{align}

\noindent 
Let $M>0$ to be determined later on. By H\"older's inequality and the fact that $\Phi_{\beta,\lambda}(v) \leq v^{\beta}$, we have
	\begin{align} \nonumber
		&\| |x|^{-\frac{(d-2)p}{2}} \Phi_{\beta,\lambda}^2(v) v^{p-2} \|_{L^{\frac{2d}{d+\alpha} }} \\ \nonumber
		&\leq \| \1_{\{v \leq M\}}|x|^{-\frac{(d-2)p}{2}} \Phi_{\beta,\lambda}^2(v) v^{p-2} \|_{L^{\frac{2d}{d+\alpha} }} + \| \1_{\{v \geq M\}}|x|^{-\frac{(d-2)p}{2}} \Phi_{\beta,\lambda}^2(v) v^{p-2} \|_{L^{\frac{2d}{d+\alpha} }} \\ \nonumber
		&\leq M^{p-2} \| \1_{\{v \leq M\}}|x|^{-\frac{(d-2)p}{2}} \Phi_{\beta,\lambda}^2(v)  \|_{L^{\frac{2d}{d+\alpha} }} \\ \label{est:Phi-2}
		&\quad + \| \1_{\{v \geq M\}}|x|^{-\frac{d-2}{2}}\Phi_{\beta,\lambda}(v) \|_{L^{\frac{2dp}{d+\alpha}}}^2 \| \1_{\{v \geq M\}}|x|^{-\frac{d-2}{2}}v\|_{L^{\frac{2dp}{d+\alpha}}}^{p-2}.
	\end{align}
We can find $M$ large enough such that
	$$ C\beta \| |\cdot|^{-\frac{(d-2)}{2}}v \|_{H^1(\R^d;|x|^{-(d-2)})}^p \| \1_{\{v \geq M\}}|x|^{-\frac{d-2}{2}}v\|_{L^{\frac{2dp}{d+\alpha}}}^{p-2} \leq \frac{1}{2},
	$$
	and then fix $M$. 
	Combining \eqref{est:Phi-1}, \eqref{est:Phi-2} and the estimate $\Phi_{\beta,\lambda}(v) \leq v^{\beta}$ yields
	\begin{align} \label{est:iteration-1}
		\| |x|^{-\frac{d-2}{2}}\Phi_{\beta,\lambda}(v)\|_{L^{\frac{2dp}{d+\alpha}}}^2 &\leq c_0\beta \| |x|^{-\frac{(d-2)p}{2}} v^{2\beta}  \|_{L^{\frac{2d}{d+\alpha} }},
	\end{align}
	where the constant $c_0$ in the second estimate depends on fixed $M$. By Fatou's lemma, we derive
	\begin{equation} \label{v-iterate}
	\| v \|_{L^{\frac{2dp\beta}{d+\alpha}}(\R^d;|x|^{-\frac{(d-2)dp}{d+\alpha}})} \leq (c_0\beta)^{\frac{1}{2\beta}}\| v \|_{L^{\frac{4d\beta}{d+\alpha}}(\R^d;|x|^{-\frac{(d-2)dp}{d+\alpha}})}.
	\end{equation}
	We will use Moser's iteration on the above estimate to obtain $L^\infty$-bound for $v$. For $k \geq 1$, let us define 
	$$ \beta_k:=\left(\frac{p}{2}\right)^k, \quad A_k:= \| v \|_{L^{\frac{2dp\beta_k}{d+\alpha}}(\R^d;|x|^{-\frac{(d-2)dp}{d+\alpha}})}, \quad   k \geq 0.
	$$
	Then we infer from \eqref{v-iterate} that
	$$ A_{k+1} \leq (c_0\beta_{k+1})^{\frac{1}{2\beta_{k+1}}}A_k, \quad k \geq 0,
	$$
which implies
$$ A_{k+1} \leq \left( c_0^{\sum_{j=1}^k\frac{1}{2\beta_i}} \right)\left( \Pi_{i=1}^k \beta_j^{\frac{1}{2\beta_j}} \right)A_0.
$$
Using the fact that $c_0^{\sum_{j=1}^\infty\frac{1}{2\beta_i}}<\infty$,  $\Pi_{i=1}^\infty \beta_j^{\frac{1}{2\beta_j}}<\infty$, and $A_0 = \|  |x|^{-\frac{d-2}{2}}v\|_{L^{\frac{2dp}{d+\alpha}}} \lesssim \| v \|_{H^1(\R^d;|x|^{-(d-2)})}$, we derive that
	$$
	A_{k+1} \leq C_* \| v \|_{H^1(\R^d;|x|^{-(d-2)})}.
	$$
	Now fix $R>0$, then we derive from the above estimate and the definition of $A_{k+1}$ that
	$$
	R^{-\frac{d-2}{\beta_{k+1}}}\left( \int_{B_R} v^{\beta_{k+1}q} dx \right)^{\frac{1}{\beta_{k+1} q}} \leq C_* \| v \|_{H^1(\R^d;|x|^{-(d-2)})}.
	$$
	Letting $k \to \infty$ and $R \to \infty$ successively yields
	$$ \| v \|_{L^\infty} \leq C_* \| v \|_{H^1(\R^d;|x|^{-(d-2)})},
	$$
	which in turn implies \eqref{lim0-phi0-1}.
	
	Next, we derive from \eqref{lim0-phi0-1} that for any $K \Subset \R^d \setminus \{0\}$, $w, |x|^{-2}w, (I_\alpha* w^p)w^{p-1} \in L^\infty(K)$. Therefore, we infer from equation \eqref{eq:EL-u} that 
	$$ - \Delta w = -w + \mu_0 |x|^{-2}w + (I_\alpha * w^p)w^{p-1} \in L^\infty(K).
	$$
	By the standard regularity for elliptic equations, we derive that $w \in W^{2,q}(K')$ for any $K' \Subset K$ and $1 \leq q \leq \infty$. Using the Sobolev embeddings, we obtain $w \in C^1_{\textrm{loc}}(\R^d \setminus \{0\})$. Next, by arguing along the same line as in the proof of \cite[Claim 3, Proposition 4.1]{MorVan-13}, we deduce that $w \in C^2(\R^d \setminus \{0\})$. The proof is complete.
\end{proof}

\begin{proposition} \label{prop:asympt0} Assume conditions in  \eqref{assump:p>2} hold. Let $w \in \cQ_{\mu_0}$ be a positive radial solution to \eqref{eq:EL-u}. There holds
	\begin{equation} \label{finitelimit}
		\lim_{|x| \to 0}|x|^{\frac{d-2}{2}}w(x) \in (0,\infty).	
	\end{equation}	
\end{proposition}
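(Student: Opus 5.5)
We work with $v(x)=|x|^{\frac{d-2}{2}}w(x)$, which is positive, radial, and satisfies \eqref{eq:transform-v}. By Proposition \ref{prop:asymp-phi0-near0}, $v\in L^\infty(\R^d)$ and $v\in C^2$ away from the origin. We write $v=v(r)$. The claim \eqref{finitelimit} is then equivalent to saying that $v(r)$ has a finite positive limit as $r\to0$.

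In radial form, \eqref{eq:transform-v} reads
\begin{equation*}
-(r\,v')'=r\bigl(f(r)-v\bigr),\qquad f(r):=|x|^{-\frac{(d-2)p}{2}}\bigl[I_\alpha*(|\cdot|^{-\frac{(d-2)p}{2}}v^p)\bigr]v^{p-1},
\end{equation*}
because $r^{d-1}\cdot r^{-(d-2)}=r$. Since $v$ is bounded, the convolution $I_\alpha*(|y|^{-\frac{(d-2)p}{2}}v^p)$ is bounded near $0$ provided $\frac{(d-2)p}{2}<\alpha$ locally, with suitable decay at infinity. Without that condition it is at worst $\lesssim r^{\alpha-\frac{(d-2)p}{2}}$, and the assumption $p<\frac{d+\alpha}{d-2}$ keeps the total singularity integrable. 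The first concrete step is therefore to prove
\begin{equation*}
|f(r)-v(r)|\lesssim r^{-\gamma}\quad\text{near }0,\ \text{with } \gamma<2.
\end{equation*}
I would check this with the exponents allowed by \eqref{assump:p>2}, using $p<\frac{d+\alpha}{d-2}$ and $\alpha>d-4$. The borderline powers are where the main difficulty lies. If the direct bound fails, I would bootstrap by first using $v\in L^\infty$ and then improving the decay of the convolution via the Hardy–Littlewood–Sobolev inequality \eqref{inequa:HLS-3}.

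Given $\gamma<2$, integrating the equation gives
\begin{equation*}
r\,v'(r)=c_0+\int_0^r s\bigl(v-f\bigr)\,ds=c_0+O(r^{2-\gamma}).
\end{equation*}
The constant $c_0$ must vanish. If $c_0\neq0$, then $v\sim c_0\ln r$, which contradicts the boundedness of $v$. Hence $|v'(r)|\lesssim r^{1-\gamma}$, which is integrable at $0$. Consequently $\lim_{r\to0}v(r)=:\ell$ exists and $\ell\in[0,\infty)$.

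It remains to show $\ell>0$, and I expect this to be the main obstacle. The paper hints at the approach: use the substitution $\rho=(-\ln r)^{-\frac{1}{d-2}}$, under which $\tilde v(\rho)=v(r)$ solves a uniformly elliptic equation in the punctured $d$-ball whose zero-order terms are controlled. A bounded positive solution then has a removable singularity at $\rho=0$, and Serrin's Harnack inequality \cite{Ser-64} gives $\tilde v(0)>0$, so $\ell>0$.

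An elementary ODE alternative is also available. Since $f\ge0$, we have $-(rv')'\ge -rv\ge -Cr$, and integrating from $0$ gives $rv'\le Cr^2$. Hence $v(r)\ge v(r_0)-C r_0^2$ for $r<r_0$. Choosing $r_0$ small makes this lower bound positive, because $v(r_0)>0$ and positivity is preserved under this estimate once we use $v\ge0$ and absorb the term, for instance via a Grönwall argument on $\int_0^r s v\,ds\le C r^2\sup_{(0,r)}v$. This gives $\ell\ge \tfrac12 v(r_0)>0$ and completes the proof.
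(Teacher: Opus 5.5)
Your route differs from the paper's and can be completed, but as written two steps fail.

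\textbf{The radial equation is mis-weighted.} For radial $v$, $\mathrm{div}(|x|^{-(d-2)}\nabla v)=r^{-(d-1)}(rv')'$. Multiplying \eqref{eq:transform-v} by $r^{d-1}$ therefore gives $-(rv')'=r\,(r^{d-2}F-v)$, where $F$ is the right-hand side of \eqref{eq:transform-v}. So the correct term is
\[
f=r^{d-2}F=(I_\alpha*w^p)\,w^{p-2}\,v .
\]
Your $f$ is $F$ itself, which carries an extra factor $r^{-(d-2)}$. For your $f$ the bound with $\gamma<2$ is false when $d\ge 4$. Indeed, $I_\alpha*w^p$ is bounded below near $0$ and in fact $v\to\ell>0$, so your $f$ is at least of order $r^{-\frac{(d-2)p}{2}}$, and $\frac{(d-2)p}{2}>d-2\ge 2$. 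No bootstrap can repair a mis-weighted equation.

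For the correct $f$ the bound is immediate; it is exactly the paper's estimate \eqref{est:nonlinear-near0}. Split $I_\alpha*w^p$ into two regions. On $\{|x-y|<|x|/2\}$, \eqref{lim0-phi0-1} gives $|x|^{\alpha-\frac{(d-2)p}{2}}$. On the complement, H\"older with $w^p\in L^{\frac{2d}{d+\alpha}}$ gives $|x|^{-\frac{d-\alpha}{2}}$; boundedness of $v$ alone does not control this far field. Hence $0\le f/v\lesssim r^{-\gamma}$ on $(0,1)$ with $\gamma=\frac{d-\alpha+(d-2)(p-2)}{2}$, and $\gamma<2$ is equivalent to $p<\frac{d+\alpha}{d-2}$. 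With this, your second step is correct: $c_0=\lim_{r\to0}rv'(r)$ exists, $c_0=0$ by boundedness of $v$, and $|v'|\lesssim r^{1-\gamma}$ is integrable. This gives existence of $\ell$.

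\textbf{Positivity.} The estimate $v(r)\ge v(r_0)-Cr_0^2$ does not yield $\ell>0$ by taking $r_0$ small. If $\ell=0$, nothing prevents $v(r_0)\le Cr_0^2$. The absorption must be done at a maximum point. Since $c_0=0$ and $f\ge 0$, we have $rv'(r)\le\int_0^r sv\,ds$. Fix $r_0\in(0,1]$ and set $M=\sup_{(0,r_0]}v$.

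If $M$ is not attained on $(0,r_0]$, then by continuity and the existence of $\ell$ we get $\ell=M\ge v(r_0)>0$. Otherwise $M=v(r_1)$ for some $r_1\in(0,r_0]$, and for $0<r<r_1$,
\[
v(r_1)-v(r)\le\int_r^{r_1}\frac1s\int_0^s\tau v(\tau)\,d\tau\,ds\le\frac{M r_1^2}{4}.
\]
So $v\ge\frac34 M$ on $(0,r_1)$, and $\ell\ge\frac34 v(r_0)>0$.

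\textbf{Comparison with the paper.} With these repairs your argument is a valid, more elementary alternative. The paper changes variables to $\rho=(-\ln r)^{-\frac{1}{d-2}}$. There, the same bound $\gamma<2$ is what makes the potential $\tilde V$ bounded. Serrin's removable-singularity theorem and Harnack inequality \cite{Ser-64} then give continuity and positivity of $\tilde v$ at $0$ together; your first option for positivity is just this argument.

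Your excluded mode $c_0\ln r=-c_0\rho^{-(d-2)}$ is precisely the fundamental solution that Serrin's theorem rules out. So you obtain by one-variable integration and a max-point argument what the paper takes from elliptic theory. Your route avoids verifying $\tilde v\in H^1(B_1)$ and the hypotheses of \cite{Ser-64}; the paper's is shorter given those citations. Both arguments rely on radial symmetry.
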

\begin{proof} Using \eqref{lim0-phi0-1}, $w \in \cQ_{\mu_0} \hookrightarrow L^{\frac{2dp}{d+\alpha}}(\R^d)$ and the assumption $p<\frac{d+\alpha}{d-2}$, we obtain, for any $x \in B_1 \setminus \{0\}$, that 
\begin{align*}
(I_\alpha * w^p)(x) &= \int_{\{|x-y| \leq \frac{1}{2}|x|\}} |x-y|^{-(d-\alpha)}w(y)^p dy + 	\int_{\{|x-y| \geq \frac{1}{2}|x|\}} |x-y|^{-(d-\alpha)}w(y)^p dy \\
&\lesssim |x|^{-\frac{(d-2)p}{2}+\alpha} + |x|^{-\frac{d-\alpha}{2}} \lesssim |x|^{-\frac{d-\alpha}{2}}.
\end{align*}	 
Therefore
\begin{equation} \label{est:nonlinear-near0} (I_\alpha * w^p)(x)w^{p-2}(x) \lesssim |x|^{-\frac{d-\alpha+(d-2)(p-2)}{2}}, \quad \forall x \in B_1 \setminus \{0\}.
\end{equation}
Since $w$ is a positive radial solution to \eqref{eq:EL-u}, we have
$$ -w_{rr} - \frac{d-1}{r}w_r - \frac{(d-2)^2}{4r^2}w = (I_\alpha * w^p)w^{p-1} \quad \text{in } \R^d \setminus \{0\}.
$$ 
Set $v(x)=|x|^{\frac{d-2}{2}}w(x)$ for $x \neq 0$, then $v \in C^2(\R^d \setminus \{0\})$ and $v$ satisfies
$$ v_{rr} + \frac{1}{r}v_r = a(r)v \quad \text{in } \R^d \setminus \{0\},
$$
where $a$ is a radial function defined by
$$ a(x): = (I_\alpha * (|\cdot|^{-\frac{d-2}{2}}v)^p)(x)(|x|^{-\frac{d-2}{2}}v(x))^{p-2} - 1.
$$
	
We define
	$$ \tilde v(\rho) := v(r), \quad \tilde a(\rho) := a(r) \quad \text{with } \rho=(-\ln(r))^{-\frac{1}{d-2}}, \, r \in (0,1).
	$$
	Then 
	\begin{equation} \label{eq:tildev-radial} \tilde v_{\rho\rho} + \frac{d-1}{\rho}\tilde v_{\rho} + \tilde V \tilde v = 0, \quad \rho \in (0,1),
	\end{equation}
	where $ \tilde V(\rho): = (d-2)^2 \rho^{-2(d-1)}e^{-2\rho^{-(d-2)}}\tilde a(\rho)$.
Combining \eqref{est:nonlinear-near0}, the definition of $a$, $\tilde a$, $\tilde V$ and the relation $r=e^{-\rho^{-(d-2)}}$, we deduce
$$ \tilde V(\rho) \lesssim \rho^{-2(d-1)}e^{-(2-\frac{d-\alpha + (d-2)(p-2)}{2})\rho^{-(d-2)}} \lesssim 1, \quad \forall \rho \in (0,1).
$$
In the above second estimate, we have used the fact that  $2-\frac{d-\alpha + (d-2)(p-2)}{2}>0$ due to $p<\frac{d+\alpha}{d-2}$. 

By \eqref{eq:tildev-radial}, we see that $\tilde v$ is a solution to
	\begin{equation} \label{eq:tildeV-B1} - \Delta \tilde v + \tilde V \tilde v = 0
	\end{equation}
in $B_1 \setminus \{0\}$. Note that $|\nabla \tilde v| \in L^2(B_1)$ due to the fact that $v \in H^1(\R^d;|x|^{-(d-2)})$ and $\tilde v \in L^\infty(B_1) \subset L^2(B_1)$; hence $\tilde v \in H^1(B_1)$. Therefore, employing \cite[Corollary 1 on page 176]{Ser-64}, we derive that $0$ is a removable singularity and $\tilde v$ can be extended as a continuous solution of equation \eqref{eq:tildeV-B1} in the whole domain $B_1$. Moreover, by the Harnack inequality \cite[Theorem 5]{Ser-64}, we deduce that $\tilde v(0)>0$. This implies the desired result.  
\end{proof}

Next we show that ground state solutions decay when $|x| \to \infty$.

\begin{proposition} \label{prop:asym-infty}
Assume conditions in  \eqref{assump:p>2} hold. Let $w \in \cQ_{\mu_0}$ be a positive solution to \eqref{eq:EL-u}. 
	
	(i) Then there exists $R_0>0$ large enough such that 
	\begin{equation} \label{est:decayQ}
		w(x) \leq C|x|^{-\frac{d-1}{2}}e^{-\frac{1}{2}|x|}	\quad \forall |x|>R_0.
	\end{equation}
	
	(ii) Assume in addition that $w$ is radial, then
	\begin{equation} \label{est:limdecayQ} \lim_{|x| \to \infty}|x|^{\frac{d-1}{2}}e^{|x|}w(x) \in (0,\infty).
	\end{equation}
\end{proposition}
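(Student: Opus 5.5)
We will treat the equation away from the origin as a linear Schr\"odinger equation with a decaying potential, then use comparison with explicit radial supersolutions/subsolutions, following the strategy of \cite{MorVan-2013}.

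\textbf{Step 1: decay of the potential.} Rewrite \eqref{eq:EL-u} as
\[
-\Delta w + \bigl(1 - V(x)\bigr) w = 0, \qquad V(x) := \frac{\mu_0}{|x|^2} + (I_\alpha * w^p)(x)\, w(x)^{p-2},
\]
in $\R^d\setminus\{0\}$. We show $V(x)\to 0$ as $|x|\to\infty$ at a quantitative rate.
\begin{itemize}[leftmargin=*]
\item By Proposition \ref{prop:asymp-phi0-near0}, $w(x)\le C|x|^{-\frac{d-2}{2}}$. Since $p>2$, this gives $w^{p-2}(x)\lesssim |x|^{-\frac{(d-2)(p-2)}{2}}\to 0$.
\item For the Riesz term, split the integral into $\{|x-y|\le |x|/2\}$ and its complement.
\item On the first region, use the pointwise bound on $w^p$, which gives a factor $|x|^{-\frac{(d-2)p}{2}+\alpha}$, decaying because $p<\frac{d+\alpha}{d-2}$ forces $\frac{(d-2)p}{2}<\frac{d+\alpha}{2}$.
\item On the second region, use H\"older with $w\in L^{\frac{2dp}{d+\alpha}}$, which bounds $(I_\alpha*w^p)$ by $C|x|^{-\frac{d-\alpha}{2}}$, as in the proof of Proposition \ref{prop:asympt0}.
\end{itemize}
Hence $V(x)\lesssim |x|^{-\sigma}$ for some $\sigma>0$. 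In particular, $V(x)\le 3/4$ for $|x|\ge R_0$.

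\textbf{Step 2: the bound in (i).} For $|x|>R_0$ we have $-\Delta w+\tfrac14 w\le 0$. Take the barrier $\psi(x)=|x|^{-\frac{d-1}{2}}e^{-|x|/2}$. A direct computation gives
\[
-\Delta\psi+\tfrac14\psi = -\frac{(d-1)(d-3)}{4|x|^2}\,\psi .
\]
Since this error term is not signed, I would use $\psi_\varepsilon=|x|^{-\frac{d-1}{2}}e^{-(1/2-\varepsilon)|x|}$ or a slightly larger coefficient in front of $w$. Indeed $1-V\ge 1-C|x|^{-\sigma}$, so we may compare with $-\Delta+\tfrac14(1+\delta)$, which absorbs the $O(|x|^{-2})$ term for large $|x|$. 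The supersolution inequality for $\psi$ relative to this operator then holds on $\{|x|>R_0\}$.

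Choose $C$ so that $C\psi\ge w$ on $\partial B_{R_0}$, which is possible since $w$ is bounded there by Proposition \ref{prop:asymp-phi0-near0}. Both $w$ and $\psi$ tend to $0$ at infinity; for $w$ this follows in the averaged sense from $w\in L^2$ together with local elliptic estimates. The weak maximum principle on the exterior domain, applied to $(w-C\psi)_+\in H^1$, then yields \eqref{est:decayQ}.

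\textbf{Step 3: the sharp asymptotics in (ii).} Once (i) holds, $w^{p-2}$ decays exponentially. The Riesz term satisfies $I_\alpha*w^p\sim c|x|^{-(d-\alpha)}$. Thus $V(x)=\mu_0|x|^{-2}+O(e^{-c|x|})$, which is integrable at infinity against $r\,dr$ only up to the $\mu_0 r^{-2}$ part.

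In the radial setting, $w$ solves the ODE
\[
w''+\frac{d-1}{r}w'-\Bigl(1-\frac{\mu_0}{r^2}-W(r)\Bigr)w=0, \qquad W=O(e^{-cr}).
\]
Setting $y=r^{\frac{d-1}{2}}w$ gives
\[
y''=\bigl(1+\tfrac{c_d}{r^2}-W\bigr)y .
\]
The potential perturbation $q(r)=c_dr^{-2}-W$ satisfies $\int^\infty|q|\,dr<\infty$, so Levinson's theorem (asymptotic integration) gives two solutions $\sim e^{\pm r}$. Since $w\in L^2$ and $w>0$, we get $y=c\,e^{-r}(1+o(1))$ with $c>0$, which is \eqref{est:limdecayQ}. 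Equivalently, one can argue via the monotonicity of $y'/y$ with sub- and supersolution comparison, as in \cite{MorVan-2013}.

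\textbf{Main obstacle.} The delicate point is Step 1: controlling the nonlocal term at infinity using only the sharp singular bound \eqref{lim0-phi0-1}. This is where $p>2$ and $p<\frac{d+\alpha}{d-2}$ enter. Step 3 also requires care to exclude a nonzero $e^{+r}$ component and to ensure the limit is positive.
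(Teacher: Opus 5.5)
\textbf{Gap in Step 1.} Your estimate of the nonlocal term on the near region $\{|x-y|\le |x|/2\}$ goes the wrong way, and this is the step you yourself flag as delicate. On that region the pointwise bound \eqref{lim0-phi0-1} gives a contribution $\lesssim |x|^{\alpha-\frac{(d-2)p}{2}}$. This decays only if $\frac{(d-2)p}{2}>\alpha$.

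The condition $p<\frac{d+\alpha}{d-2}$ is an \emph{upper} bound on $\frac{(d-2)p}{2}$, so it does not help here. It is what makes the same split work \emph{near the origin} in Proposition \ref{prop:asympt0}, not at infinity.

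Multiplying by $w^{p-2}\lesssim |x|^{-\frac{(d-2)(p-2)}{2}}$ gives $|x|^{\alpha-(d-2)(p-1)}$. This does not decay when $2<p\le 1+\frac{\alpha}{d-2}$. That range is nonempty under \eqref{assump:p>2} whenever $\alpha>d-2$; for example, $d=3$, $\alpha=2$, $p=\frac52$ gives $|x|^{1/2}$. So your argument establishes neither $V\lesssim |x|^{-\sigma}$ nor even $V\le\frac34$ near infinity.

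\textbf{The fix.} It is short: you only need $I_\alpha*w^p$ to be bounded on $\{|x|\ge 2\}$. Split at $|x-y|=1$ instead.
\begin{itemize}
\item For $|x-y|<1$ one has $|y|\ge |x|/2$, so \eqref{lim0-phi0-1} gives a contribution $\lesssim |x|^{-\frac{(d-2)p}{2}}\int_{B_1}|z|^{-(d-\alpha)}\,dz$.
\item For $|x-y|\ge 1$ the kernel is bounded, so the integral is $\lesssim \|w\|_{L^p}^p<\infty$. This uses $2<p<\frac{d+\alpha}{d-2}<2^*$ and \eqref{eq:Q-in-Lp}.
\end{itemize}
Hence $V(x)\lesssim |x|^{-2}+|x|^{-\frac{(d-2)(p-2)}{2}}$, which is all your Steps 2 and 3 use. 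The paper itself only asserts a power decay of $(I_\alpha*w^p)w^{p-2}$ from \eqref{lim0-phi0-1}, $w\in\cQ_{\mu_0}$ and $p>2$.

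\textbf{Step 2.} There is a harmless sign slip: in fact $-\Delta\psi+\frac14\psi=+\frac{(d-1)(d-3)}{4|x|^2}\psi\ge 0$ for $d\ge 3$. So $\psi=|x|^{-\frac{d-1}{2}}e^{-|x|/2}$ is already a supersolution of $-\Delta+\frac14$, and the $\delta$-perturbation is unnecessary. With that, Step 2 is the paper's argument. The only difference is that the paper compares $w$ with the exterior Dirichlet solution of $-\Delta\varphi+\frac14\varphi=0$ and quotes \cite[Lemma 6.4]{MorVan-2013} for its decay; your explicit barrier removes the need for that lemma.

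\textbf{Step 3.} This is a genuinely different and valid route.
\begin{itemize}
\item The paper uses $-\Delta w+w\ge 0\ge -\Delta w+Ww$ with $W=1-\mu_0|x|^{-2}-C_2e^{-(p-2)|x|/2}$ and the comparison argument of \cite[Proposition 6.3]{MorVan-2013}.
\item You reduce to $y''=(1+q)y$ with $q\in L^1$ at infinity (in fact $c_d=-\frac14$) and apply Levinson's asymptotic integration.
\item The growing mode is excluded because $\int_R^\infty y^2\,dr$ is a constant multiple of $\int_{|x|>R}w^2\,dx$, and $w>0$ forces a positive coefficient on the decaying mode.
\end{itemize}
Your route gives the exact asymptotics directly but depends on the radial ODE. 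The comparison route needs only differential inequalities.
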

\begin{proof}
(i)	By using \eqref{lim0-phi0-1}, the fact that $w \in \cQ_{\mu_0}$ and the assumption $p>2$, we derive that
	$$ (I_\alpha * w^p)(x)w^{p-2}(x) \leq C|x|^{-\frac{d-2}{2(d+\alpha)}[(d-\alpha)p+(d+\alpha)(p-2)]}, \quad \forall x \in \R^d \setminus B_4. 
	$$	
	Therefore, there exists $R=R(p) > 4$ such that
	$$ \frac{\mu_0}{|x|^2}w(x) +  (I_\alpha * w^p)(x)w^{p-1}(x) \leq \frac{3}{4}w(x), \quad \forall x \in \R^d \setminus B_R. 
	$$
	As a consequence, $w$ satisfies  $-\Delta w + \frac{1}{4}w \leq 0$ in $\R^d \setminus B_{R}$.
	Let $\varphi \in C^2(\R^d \setminus B_R)$ be the solution to
	\begin{equation} \label{eq:linear} 
		\left\{  \begin{aligned}
			&-\Delta \varphi + \frac{1}{4}\varphi = 0 \quad \text{ in } \R^d \setminus B_{R}, \\
			&\varphi = CR^{-\frac{d-2}{2}}  \text{ on } \partial B_{R}, \quad 
			\lim_{|x| \to \infty} \varphi(x) = 0,
		\end{aligned} \right. \end{equation}
where $C$ is the constant in \eqref{lim0-phi0-1}. By \cite[Lemma 6.4]{MorVan-2013}, we have
$$ \varphi(x) \leq C_1|x|^{-\frac{d-1}{2}}e^{-\frac{|x|}{2}}, \quad \forall x \in \R^d \setminus B_R.
$$ 	 
On the other hand, note that $w \leq \varphi$ on $\partial B_R$ due to \eqref{lim0-phi0-1}, hence by the comparison principle for elliptic equation, we derive $ w \leq \varphi$ in  $\R^d \setminus B_R$. 
Consequently, we obtain \eqref{est:decayQ}. 

(ii) From \eqref{est:decayQ}, we can show that
$$ (I_\alpha * w^p)(x)w^{p-2}(x) \leq C_2|x|^{-(\frac{d-\alpha}{2} + (d-1)(p-2))}e^{-\frac{(p-2)|x|}{2}} \leq C_2 e^{-\frac{(p-2)|x|}{2}}, \quad \forall x \in \R^d \setminus B_R. 
$$	
Therefore $ -\Delta w + w \geq 0 \geq - \Delta w + Ww$ in $\R^d \setminus B_R$,
where
$$ W(x) = 1- \mu_0|x|^{-2} - C_2 e^{-\frac{(p-2)|x|}{2}}, \quad x \in \R^d \setminus B_R.
$$
By using the same argument as in the proof of \cite[Proposition 6.3]{MorVan-2013}, we obtain \eqref{est:limdecayQ}. 
\end{proof}

\begin{proof}[\textbf{Proof of Theorem \ref{th:groundstate}}]
The existence part in statement (1) is obtained in Lemma \ref{lem:HGN}. When $\frac{d+\alpha}{d}<p<\frac{d+\alpha}{d-2}$, if $w \in \cQ_{\mu_0}$, by using the standard bootstrap argument, together with the embedding \eqref{eq:Q-in-Lp} and the standard elliptic regularity theory, we can show that $w \in L^{\frac{2dp}{d+\alpha}(\R^d)} \cap W_{\rm loc}^{2,q}(\R^d \setminus \{0\})$ for some $q>1$. Therefore, estimates \eqref{eq:Pohozaev-1}, \eqref{eq:Pohozaev-2} follow from Proposition \ref{prop:Pohozaev}.

Statement (2) follows easily from Proposition \ref{prop:Pohozaev} too.  Indeed, suppose by contradiction that $w \in \cQ_{\mu_0} \cap  L^{\frac{2dp}{d+\alpha}}(\R^d) \cap W_{\mathrm{loc}}^{2,q}(\R^d)$ is a nontrivial solution to equation \eqref{eq:EL-u}. If $p \geq \frac{d+\alpha}{d-2}$ then the left-hand side of \eqref{eq:Pohozaev-1} is strictly negative, while the right-hand side is nonnegative, which is impossible. Similarly, if $1<p \leq \frac{d+\alpha}{d}$, we reach to a contradiction due to \eqref{eq:Pohozaev-2}.  

Statement (3) is derived from Propositions \ref{prop:asymp-phi0-near0},  \ref{prop:asympt0} and \ref{prop:asym-infty}. 	The proof is complete.
\end{proof}

\section{Compactness of minimizing sequences and representation for minimizers} \label{sec:compactness}
This section is devoted to the proof of a compactness result for minimizing sequences for \eqref{C-W-def} and its application to showing the representation \eqref{representation-minimizer}.
\subsection{Compactness of minimizing sequences}
In this subsection, we will show that any (normalized) minimizing sequences (not necessarily radial) for \eqref{C-W-def} is pre-compact. The proof includes several steps and combines the concentration-compactness method \cite{Lio-84} and refined localization techniques to handle both the critical Hardy potential and the Choquard nonlinearity. More specifically, we will rule out the vanishing case, which enables us to extract locally convergent part whose limit is non-zero. Then we use the IMS formula to decompose the kinetic energy and perform a careful analysis based on the Hardy-Littlewood-Sobolev inequality to decompose the energy induced by the Choquard term. Consequently, we deduce that $\{u_n\}$ is convergent in $L^2(\R^d)$, which implies the desired convergence due to the interpolation.

\begin{theorem}[Compactness of minimizing sequences] \label{precompact} Let $d\ge 3$, $(d-4)_+<\alpha<d$ and $2<p<\frac{d+\alpha}{d-2}$. Assume that $\{u_n\}$ is a minimizing sequence for \eqref{C-W-def}   such that 
	$$
	\liminf_{n\to \infty} \|u_n\|_{L^2}>0, \quad \limsup_{n\to \infty} \| u_n\|_{\cQ_{\mu_0}}<\infty. 
	$$
	Then, up to a subsequence, $\{u_n\}$ converges to a function $\tilde u$  in $\cQ_{\mu_0}$. Consequently, $\tilde u$ is a minimizer for \eqref{C-W-def}.
\end{theorem}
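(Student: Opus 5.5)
We follow Lions' concentration-compactness scheme. The key structural fact is that the problem is not translation invariant: because of the Hardy potential, any piece of mass escaping to infinity only sees $-\Delta$, and the best constant for $-\Delta$ is strictly worse.

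\textbf{Normalization.} $\cW$ is invariant under $u\mapsto zu$, $z\in\C\setminus\{0\}$. Using this and the bounds in the hypotheses, we pass to a subsequence with
\[
\|u_n\|_{L^2}\to m>0,\qquad \|\sqrt{\cL_{\mu_0}}u_n\|_{L^2}\to h,\qquad \int_{\R^d}(I_\alpha*|u_n|^p)|u_n|^p\,dx\to N>0.
\]
Here $N>0$ because $\cW(u_n)\to\sC$ and the numerator stays bounded. Consequently $h>0$ as well, and $h^\theta m^{1-\theta}=\sC N^{1/(2p)}$.

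\textbf{Step 1: excluding vanishing.} Suppose $\sup_y\int_{B_1(y)}|u_n|^2\to0$. By \eqref{eq:Q-in-Lp}, $\{u_n\}$ is bounded in $H^s$ for some $s\in(0,1)$. The fractional Lions lemma then gives $u_n\to0$ in $L^q$ for every $q\in(2,2^*)$. Taking $q=\frac{2dp}{d+\alpha}$, the Hardy--Littlewood--Sobolev inequality \eqref{inequa:HLS-3} forces $N=0$, a contradiction. So there exist $y_n$ with $\int_{B_1(y_n)}|u_n|^2\ge\delta>0$.

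Next we show that $\{y_n\}$ is bounded. If $|y_n|\to\infty$, we localize near $y_n$ with the IMS formula. Away from the origin, $\mu_0|x|^{-2}$ is negligible, so a non-trivial profile of the translated sequence would produce a function $v$ satisfying $\cW_0(v)\le\sC$, where $\cW_0$ is $\cW$ with $\cL_{\mu_0}$ replaced by $-\Delta$. On the other hand, $\|\nabla v\|_{L^2}>\|\sqrt{\cL_{\mu_0}}v\|_{L^2}$, so $\inf\cW_0>\sC$. To see this, take an optimizer $Q_0$ of the potential-free problem (it exists by \cite{FenYua-15}); since $\int|x|^{-2}|Q_0|^2>0$, we get $\sC\le\cW(Q_0)<\cW_0(Q_0)=\inf\cW_0$. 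This contradiction shows $y_n$ stays bounded. Hence, after possibly enlarging the ball, $u_n\wto\tilde u\neq0$ weakly in $\cQ_{\mu_0}$, and by local compactness $\tilde u\neq0$.

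\textbf{Step 2: splitting.} Let $\chi_R$ be a cutoff equal to $1$ on $B_R$, and choose $\eta_R$ with $\chi_R^2+\eta_R^2=1$ (quadratic partition). The IMS formula gives
\[
\|\sqrt{\cL_{\mu_0}}u_n\|_{L^2}^2=\|\sqrt{\cL_{\mu_0}}(\chi_Ru_n)\|_{L^2}^2+\|\sqrt{\cL_{\mu_0}}(\eta_Ru_n)\|_{L^2}^2+O(R^{-2}).
\]
Here the quadratic form of $\cL_{\mu_0}$ is used as a whole, so no separate control of the potential is needed. Choosing $R=R_n\to\infty$ slowly, we also want
\[
N(u_n)=N(\chi_Ru_n)+N(\eta_Ru_n)+o(1).
\]
This is the step I expect to be the main obstacle, because the Choquard term is nonlocal and contains cross interactions. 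The plan is to use a dichotomy choice of $R_n$ with $\int_{A_{R_n,2R_n}}|u_n|^2\to0$ (possible via Lions' concentration function), and the analogous $L^{2dp/(d+\alpha)}$ smallness on the annulus by interpolation. The cross terms $\iint|\chi u_n(x)|^p|x-y|^{-(d-\alpha)}|\eta u_n(y)|^p$ are then handled by splitting into $|x-y|\le \sqrt{R_n}$, where both points lie in a thin annulus region of small mass, and $|x-y|>\sqrt{R_n}$, where the kernel is $\le R_n^{-(d-\alpha)/2}$. In the far region we use HLS on the localized pieces together with the $L^{2dp/(d+\alpha)}$ bounds; in the near region we use HLS plus annulus smallness.

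\textbf{Step 3: strict subadditivity.} Write $a_1,h_1,N_1$ for the limits of the mass, kinetic energy and Choquard term of $\chi_Ru_n$, and $a_2,h_2,N_2$ for those of $\eta_Ru_n$. Applying \eqref{eq:CHGN-def} to each piece gives $N_i^{1/p}\le \sC^{-2}h_i^{2\theta}a_i^{2(1-\theta)}$. Then
\[
N^{1/p}\le (N_1+N_2)^{1/p}\le N_1^{1/p}+N_2^{1/p}.
\]
Combining this with Hölder's inequality ($\theta+(1-\theta)=1$) gives
\[
\sC^{2}N^{1/p}\le \textstyle\sum_i h_i^{2\theta}a_i^{2(1-\theta)}\le (h_1^2+h_2^2)^\theta(a_1^2+a_2^2)^{1-\theta}=h^{2\theta}m^{2(1-\theta)}.
\]
Equality holds throughout, and since $p>1$ the step $(N_1+N_2)^{1/p}\le N_1^{1/p}+N_2^{1/p}$ is strict unless $N_1N_2=0$. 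If $N_2=0$, then minimality forces $h_2=0$ or $a_2=0$. Since $h_2=0$ together with $a_2>0$ would make the ratio infinite, the equality case gives $a_2=0$.

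If instead $N_1=0$, then $\tilde u$ would have vanishing Choquard energy, contradicting $\tilde u\neq0$ via Step 1. So $a_2=0$, which means no mass is lost: $\|u_n\|_{L^2}\to\|\tilde u\|_{L^2}$, and $u_n\to\tilde u$ in $L^2$.

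\textbf{Step 4: conclusion.} By interpolation with the $\cQ_{\mu_0}\subset L^{2^*-\epsilon}$ bound, $u_n\to\tilde u$ in $L^{2dp/(d+\alpha)}$, so $N(\tilde u)=N$. Weak lower semicontinuity gives $\cW(\tilde u)\le\sC$, so $\tilde u$ is a minimizer. The equality $\cW(\tilde u)=\sC$ forces $\|\sqrt{\cL_{\mu_0}}\tilde u\|_{L^2}=h$. Weak convergence together with convergence of norms then yields strong convergence in $\cQ_{\mu_0}$.
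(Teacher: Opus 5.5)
Your route differs from the paper's in where the strict inequality $\sC_0>\sC$ (potential-free constant versus Hardy constant) is used. Most of it is sound, but the step where you use that inequality is asserted rather than proved.

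\textbf{How the paper does it.} The paper never locates the concentration points. After excluding vanishing, it splits once at radii $R_n\to\infty$ with vanishing annulus mass (the extraction lemma of the appendix). It uses $\sC$ for $\chi_{R_n}u_n$ and $\sC_0$ for $\eta_{R_n}u_n$, on whose support $\mu_0|x|^{-2}\le\mu_0R_n^{-2}$. In the limit this gives
\[
h^{2\theta}m^{2(1-\theta)}\ \ge\ \sC^2N^{1/p}+(\sC_0^2-\sC^2)\lim_{n\to\infty}N(\eta_{R_n}u_n)^{1/p},
\]
so the outer piece carries no Choquard energy. An inner piece with all of $N$ but strictly less mass then contradicts minimality. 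You instead use $\sC$ on both pieces and get $N_1N_2=0$ from strict subadditivity of $t\mapsto t^{1/p}$, which is correct. You then exclude $N_1=0$ via $\tilde u\neq0$, and that rests on your claim that $\{y_n\}$ is bounded.

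\textbf{The gap.} If $|y_n|\to\infty$ and $u_n(\cdot+y_n)\wto v\neq0$, nothing yet gives $\cW_0(v)\le\sC$. Lower semicontinuity controls $\|\nabla v\|_{L^2}$ and $\|v\|_{L^2}$, but $N(v)$ may be strictly smaller than $N$, so the quotient of $v$ can exceed $\sC$. To conclude, you would have to run a second dichotomy centred at $y_n$: radii $1\ll\rho_n\ll|y_n|$ with vanishing annulus mass, IMS and Choquard splitting as in your Step 2, and $\sC_0$ on the piece near $y_n$, whose Choquard energy has $\liminf$ at least $N(v)>0$ by Fatou. In other words, you would repeat Steps 2--3.

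\textbf{Repair.} Drop the boundedness claim and use $\sC_0$ for $\eta_{R_n}u_n$ in your Step 3, as the paper does. Then $N_2=0$ is forced and $N_1=N>0$ automatically. The chain $\sC^2N^{1/p}\le h_1^{2\theta}a_1^{2(1-\theta)}\le h^{2\theta}m^{2(1-\theta)}$ then gives $a_1=m$, i.e.\ $a_2=0$, without needing $\tilde u\neq0$.

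\textbf{Minor points.}
\begin{itemize}
\item When $N_2=0$ and $h_2=0<a_2$, the ratio is not ``infinite'' (the outer piece gives $0/0$). Rather, H\"older is strict, and the inner piece would have quotient $h^\theta a_1^{1-\theta}N^{-1/(2p)}<\sC$.
\item In Step 2 you must also control the terms involving $|u_n|^p-|\chi_{R_n}u_n|^p-|\eta_{R_n}u_n|^p$. These are supported in the annulus and are handled by HLS as in the paper.
\end{itemize}

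Your Step 4, which deduces strong convergence in $\cQ_{\mu_0}$ from weak convergence plus $\|\sqrt{\cL_{\mu_0}}\tilde u\|_{L^2}=h$, correctly fills in a detail the paper leaves implicit.
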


\begin{proof} By a suitable scaling, we may assume that 
	$$
	\|u_n\|_{L^2}= \|\sqrt{\cL_{\mu_0}} u_n\|_{L^2}= 1, \quad  \left(\int_{\R^d}(I_\alpha * |u_n|^p)|u_n|^p dx \right)^{\frac{1}{2p}}  \to \sC^{-1}.
	$$
	Since $\{u_n\}$ is bounded in $\cQ_{\mu_0}$, by \eqref{eq:Q-in-Lp}, up to a subsequence, $u_n \wto \tilde u$  in $\cQ_{\mu_0}$ and in $H^s(\R^d)$ for all $0<s<1$. Moreover, by the Sobolev compact embedding, for any $2 \leq q< 2^*$ and $R>0$,  $\1_{B_R}u_n \to \1_{B_R} \tilde u$ in $L^q(\R^d)$. In particular, $u_n  \to \tilde u$ a.e. in $\R^d \setminus \{0\}$.	By Fatou's lemma, we have $\| \tilde u\|_{L^2} \leq 1$. \medskip
	
	\noindent \textbf{Step 1:} No vanishing. We  assert that the vanishing case of $\{|u_n|^2\}_{n \in \N}$ cannot not occur, namely for any $R>0$,
	\begin{equation}\label{eq:novanishing}
		\limsup _{n \to \infty} \sup _{y \in \R^d} \int_{B_R(y)} |u_n|^2 d x > 0. 
	\end{equation}
	Indeed, by contradiction, we suppose that \eqref{eq:novanishing} does not hold. By Lemma \ref{lem:vanishing}, we deduce that $\|u_n\|_{L^q} \to 0$ as $n \to \infty$ for any $2< q <2^*$. By the  Hardy-Littlewood-Sobolev inequality and noting that $2 < \frac{2dp}{d+\alpha}< 2^*$, we derive
		$$ 0 < \sC^{2p} \leftarrow \|(I_\alpha * |u_n|^p) |u_n|^p\|_{L^1}  \leq C\| u_n \|_{L^{\frac{2dp}{d+\alpha}}}^{2p} \to 0,
		$$
	which is a contradiction. Therefore, \eqref{eq:novanishing} hold true for any $R>0$. \medskip

	\noindent \textbf{Step 2:} Extracting the locally convergent part. From \eqref{eq:novanishing}, up to a subsequence
	$$
	 		\lim _{n \to \infty} \sup _{y \in \R^d} \int_{B_1(y)} |u_n|^2 d x > \delta>0. 
	 $$
	Therefore, there exists a subsequence of $\{u_n\}$, still denoted by the same notation, and a sequence $\{y_n\} \subset \R^d$ such that $\int_{B_1(y_n)}|u_n|^2dx > \frac{\delta}{2}$ for all  $n \in \N$.
Let $\{R_n\}$ be an increasing sequence such that $R_n \to +\infty$ and $B_1(y_n) \subset B_{R_n}$ for any $n \in \N$. Then 
\begin{equation}\label{eq:novanishing-B1}
\int_{B_{R_n}}|u_n|^2dx  \geq \int_{B_1(y_n)}|u_n|^2dx > \frac{\delta}{2}, \quad \forall n \in \N.
\end{equation}
	
	We will prove that $\|\tilde u\|_{L^2}=1$. By contradiction, we suppose that $\|\tilde u\|_{L^2}<1$. Then from Lemma \ref{lem:extract} and \eqref{eq:novanishing-B1}, up to a subsequence, 
	\begin{equation} \label{convergence-localize}
		\lim_{n \to \infty}\| \1_{A_{R_n,3R_n}}u_n\|_{L^2} = 0, \quad \tilde \ell:=\lim_{n \to \infty}\|\1_{B_{2R_n}}u_n\|_{L^2}  \in (0,1),
	\end{equation}
	where $A_{a,b}=\{x \in \R^d: a < |x|<b\}$ for $0<a<b$. \medskip
	
	\noindent \textbf{Step 3:} Decomposition of the Hardy term. Let $\chi, \eta \in C^\infty(\R^d)$ such that $\mathrm{supp} \, \chi \subset B_1$, $\chi=0$ in $B_2^c$ and  $\chi^2+\eta^2=1$ in $\R^d$. For any $n \in \N$, set
	$$
	\chi_n (x) = \chi\big(\frac{x}{R_n}\big), \quad \eta_n(x) = \eta\big(\frac{x}{R_n}\big), \quad  x \in \R^d.
	$$
	By virtue of the IMS formula (see \cite{LieYau-88}), we have the decomposition
	\begin{align}\label{eq:localization-0} \nn
		\| \sqrt{\cL_{\mu_0}}u_n \|_{L^2}^2 &= \| \sqrt{\cL_{\mu_0}}(\chi_n u_n) \|_{L^2}^2 + \| \nabla (\eta_n u_n)\|_{L^2}^2  - \mu_0 \left\| \frac{\eta_n u_n}{|x|}\right\|_{L^2}^2 \\
		&\quad  - \| u_n \nabla \chi_n \|_{L^2}^2 - \| u_n \nabla \eta_n \|_{L^2}^2  \nn\\
		&= \| \sqrt{\cL_{\mu_0}}(\chi_n u_n) \|_{L^2}^2 + \| \nabla (\eta_n u_n)\|_{L^2}^2 + o(1)_{n\to \infty}.
	\end{align}
	Then by \eqref{eq:CHGN-def} and the Gagliardo-Nirenberg inequality without Hardy potential (see, e.g., \cite[Theorem 2.3]{FenYua-15}), we have
	\begin{align} \nn
		&\| \sqrt{\cL_{\mu_0}}u_n \|_{L^2}^{2\theta} \|u_n \|_{L^2}^{2(1-\theta)} \\	\nn
		&\geq \left( \| \sqrt{\cL_{\mu_0}}(\chi_n u_n) \|_{L^2}^2 + \| \nabla (\eta_n u_n)\|_{L^2}^2 + o(1)_{n\to \infty} \right)^{\theta}\left( \| \chi_n u_n\|_{L^2}^2 +  \| \eta_n u_n\|_{L^2}^2 \right)^{1-\theta}  \\ \nn
		&\geq \| \sqrt{\cL_{\mu_0}}(\chi_n u_n) \|_{L^2}^{2\theta} \| \chi_n u_n\|_{L^2}^{2(1-\theta)} + \| \nabla (\eta_n u_n) \|_{L^2}^{2\theta} \| \eta_n u_n\|_{L^2}^{2(1-\theta)} + o(1)_{n\to \infty} \\ \label{est:decompose-oper-1}
		&\geq \sC^2 \| (I_\alpha * |\chi_n  u_n|^p)|\chi_n u_n|^p \|_{L^1}^{\frac{1}{p}} + \sC_0^2 \| (I_\alpha * |\eta_n  u_n|^p)|\eta_n u_n|^p \|_{L^1}^{\frac{1}{p}} + o(1)_{n\to \infty}.
	\end{align}
	where
	$$
	\sC_0 :=\inf_{\varphi \in H^1(\R^d) \setminus \{0\}} \frac{ \|\nabla  \varphi\|_{L^2}^\theta \|\varphi \|_{L^2}^{1-\theta} }{\|(I_\alpha * |\eta_n  u_n|^p)|\eta_n u_n|^p \|_{L^1}^{\frac{1}{2p}}}.
	$$
	It is well-known that $\sC_0$ has a minimizer (see, e.g., \cite[Theorem 2.3]{FenYua-15}), say $v_0 \in H^1(\R^N)$. Then we have
	$$
	\sC_0 = \frac{ \|\nabla v_0\|_{L^2}^\theta \|v_0 \|_{L^2}^{1-\theta} }{\|(I_\alpha * |v_0|^p)|v_0|^p \|_{L^1}^{\frac{1}{2p}}} > \frac{ \|\sqrt{\cL_{\mu_0}} v_0\|_{L^2}^\theta \|v_0 \|_{L^2}^{1-\theta} }{\|(I_\alpha * |v_0|^p)|v_0|^p \|_{L^1}^{\frac{1}{2p}}} \geq \sC.
	$$
	Let $\delta_0>0$ such that $\sC_0^2 =\sC^2 + \delta_0^2>0$.

	\noindent \textbf{Step 4:} Decomposition of the Choquard nonlinearity. We will show that
	\begin{equation} \label{intdecomp-1}	  \begin{aligned} 
			\left(\int_{\R^d} (I_\alpha * |u_n|^p) |u_n|^p dx \right)^{\frac{1}{p}} \leq & \left(\int_{\R^d} (I_\alpha * |\chi_n u_n|^p) |\chi_n u_n|^p dx \right)^{\frac{1}{p} } \\
			&+  \left(\int_{\R^d} (I_\alpha * |\eta_n u_n|^p) |\eta_n u_n|^p dx \right)^{\frac{1}{p} }  + o(1)_{n \to \infty}. 
		\end{aligned}
	\end{equation}	
	
	From the first convergence in \eqref{convergence-localize} and the embedding \eqref{eq:Q-in-Lp}, we have for any $2 \leq q < 2^*$, 
			\begin{equation} \label{converge-anu} \lim_{n \to \infty}\| \1_{A_{R_n,3R_n}}u_n\|_{L^q} = 0. 
			\end{equation}
			
	Put
	$ \varphi_n: = u_n^p - (\chi_n u_n)^p - (\eta_n u_n)^p$.  \medskip
	
\noindent \textit{Case 1:} $p \geq 2$. In this case,  $\varphi_n = u_n^p - (\chi_n^p + \eta_n^p)u_n^p \geq u_n^p - (\chi_n^2 + \eta_n^2)u_n^p = 0$. 
	By using the elementary inequality
	$$ 0 \leq (a+b)^\kappa - a^\kappa - b^\kappa \lesssim_{\kappa} a^{\kappa-1}b + ab^{\kappa-1}, \quad \forall a>0,b>0, \kappa \geq 1,
	$$
	and the fact that $0 \leq \chi_n, \eta_n \leq 1$, $\chi_n^2 + \eta_n^2 = 1$, we obtain
	\begin{align}  \label{varphin<}  
		0 \leq \varphi_n &= |\chi_n^2 u_n + \eta_n^2 u_n|^p - |\chi_n u_n|^p - |\eta_n u_n|^p.
	\end{align}
	We write
	\begin{align}
		&\int_{\R^d} (I_\alpha * |u_n|^p) |u_n|^p dx 	\nn \\
		&= \int_{\R^d} [I_\alpha * ( |\chi_n u_n|^p + |\eta_n u_n|^p + \varphi_n) ]( |\chi_n u_n|^p + |\eta_n u_n|^p + \varphi_n) dx \nn \\ 
		&= \int_{\R^d} (I_\alpha * |\chi_n u_n|^p) |\chi_n u_n|^p dx +  \int_{\R^d} (I_\alpha * |\eta_n u_n|^p) |\eta_n u_n|^p dx  + \sum_{j=1}^6 J_{j,n}, \label{SigmaJ}
	\end{align}
	where
	\begin{align*} 
		&J_{1,n}:=2\int_{\R^d} (I_\alpha * |\chi_n u_n|^p) |\eta_n u_n|^p dx, \quad &&J_{2,n}:=\int_{\R^d} (I_\alpha * |\chi_n u_n|^p) \varphi_n dx,	\\
		&J_{3,n}:=\int_{\R^d} (I_\alpha * |\eta_n u_n|^p) \varphi_n dx,	
		&&J_{4,n}:=\int_{\R^d} (I_\alpha * \varphi_n) |\chi_n u_n|^p dx, \\  &J_{5,n}:=\int_{\R^d} (I_\alpha * \varphi_n) |\eta_n u_n|^p dx,	
		&&J_{6,n}:= \int_{\R^d} (I_\alpha * \varphi_n) \varphi_n dx. 
	\end{align*}
	Let $\epsilon>0$ small enough such that $\alpha+\epsilon<d$. By the Hardy-Littlewood-Sobolev inequality \eqref{inequa:HLS-3}, the continuous embedding $\cQ_{\mu_0} \hookrightarrow L^{\frac{2dp}{d+\alpha}}(\R^d) \cap L^{\frac{2dp}{d+\alpha+\epsilon}}(\R^d)$, we estimate $J_{1,n}$ as
	\begin{align*}
		J_{1,n} &\lesssim R_n^{-\epsilon}\int_{\R^d}\1_{B_{3R_n}^c}(x)|u_n(x)|^p  \int_{\R^d}	\frac{\1_{B_{2R_n}}(y)|u_n(y)|^p}{|x-y|^{d-\alpha-\epsilon}}dy dx \\
		&\quad +\int_{\R^d}\1_{A_{R_n,3R_n}}(x)|u_n(x)|^p  \int_{\R^d}	\frac{\1_{B_{2R_n}}(y)|u_n(y)|^p}{|x-y|^{d-\alpha}}dy dx \\
		&\lesssim R_n^{-\epsilon}\| \1_{B_{3R_n}^c}u_n \|_{L^{\frac{2dp}{d+\alpha+\epsilon}}} \| \1_{B_{2R_n}}u_n \|_{L^{\frac{2dp}{d+\alpha+\epsilon}}} + \| \1_{A_{R_n,3R_n}}u_n \|_{L^{\frac{2dp}{d+\alpha}}} \| \1_{B_{2R_n}}u_n \|_{L^{\frac{2dp}{d+\alpha}}} \\
		&\lesssim R_n^{-\epsilon}\| u_n \|_{\cQ_{\mu_0}}^2  + \| \1_{A_{R_n,3R_n}}u_n \|_{L^{\frac{2dp}{d+\alpha}}} \| u_n \|_{\cQ_{\mu_0}}.
	\end{align*}	
	Therefore, since $2<\frac{2dp}{d+\alpha}<2_*$, thanks to the convergence \eqref{converge-anu}, $\lim_{n \to \infty}J_{1,n} = 0$. 
	
	Next, we use \eqref{varphin<}  to estimate $J_{2,n}$ as follows 
	\begin{align*} J_{2,n} &\lesssim \int_{\R^d} (I_\alpha * |\chi_n u_n|^p) \1_{A_{R_n,2R_n}} |u_n|^p dx \lesssim  \| u_n\|_{L^{\frac{2dp}{d+\alpha} }}^p \| \1_{A_{R_n,2R_n} } u_n \|_{L^{\frac{2dp}{d+\alpha} }}^p,
	\end{align*}
	which, together with the convergence \eqref{converge-anu}, implies $\lim_{n \to \infty}J_{2,n} = 0$. Similarly, we can show that $\lim_{n \to \infty}J_{j,n} = 0$ for all \quad $3 \leq j \leq 6$. Therefore, from \eqref{SigmaJ}, we infer \eqref {intdecomp-1}. \medskip
	
	\noindent \textit{Case 2:} $\frac{d+\alpha}{d}<p < 2$. Then by using the elementary estimate 
	$$(a+b)^\kappa \leq a^\kappa+b^\kappa, \quad \forall a>0, b>0,  0<\kappa<1,
	$$ and estimate \eqref{inequa:HLS-3}, we have
	\begin{equation} \label{intdecomp-1a} \begin{aligned}
	\int_{\R^d} (I_\alpha * |u_n|^p) |u_n|^p dx 
			&\leq \int_{\R^d} (I_\alpha * |\chi_n u_n|^p) |\chi_n u_n|^p dx + \int_{\R^d} (I_\alpha * |\eta_n u_n|^p) |\eta_n u_n|^p dx \\
			&+ \int_{\R^d} (I_\alpha * |\chi_n u_n|^p) |\eta_n u_n|^p dx + \int_{\R^d} (I_\alpha * |\eta_n u_n|^p) |\chi_n u_n|^p dx.		
		\end{aligned}
	\end{equation}
	As above, we can show that
	$$ \lim_{n \to \infty}\int_{\R^d} (I_\alpha * |\chi_n u_n|^p) |\eta_n u_n|^p dx = \lim_{n \to \infty} \int_{\R^d} (I_\alpha * |\eta_n u_n|^p) |\chi_n u_n|^p dx = 0.
	$$
	Therefore, from \eqref{intdecomp-1a}, we also derive \eqref{intdecomp-1}.	\medskip
	

\noindent \textbf{Step 5:} End of the proof.  Combining \eqref{est:decompose-oper-1} and \eqref{intdecomp-1} yields
	\begin{equation} \label{est:decompose-oper-2}
		\| \sqrt{\cL_{\mu_0}}u_n \|_{L^2}^{2\theta} \|u_n \|_{L^2}^{2(1-\theta)} \\
		\geq \sC^2 \| (I_\alpha * |u_n|^p)|u_n|^p \|_{L^1}^{\frac{1}{p}} + \delta_0^2 \| (I_\alpha * |\eta_n  u_n|^p)|\eta_n u_n|^p \|_{L^1}^{\frac{1}{p}} + o(1)_{n\to \infty}.
	\end{equation}
	Since $\{u_n\}$ is a minimizing sequence for \eqref{eq:CHGN-def}, we infer from \eqref{est:decompose-oper-2} that
	$$
	\lim_{n \to \infty }\| (I_\alpha * |\eta_n  u_n|^p)|\eta_n u_n|^p \|_{L^1}^{\frac{1}{2p}}=0.
	$$ 
	Plugging it back into \eqref{intdecomp-1} leads to
	\begin{equation} \label{est:decompose-Choquard-4}
		\| (I_\alpha * |u_n|^p)|u_n|^p \|_{L^1}^{\frac{1}{p}} \leq \| (I_\alpha * |\chi_n  u_n|^p)|\chi_n u_n|^p \|_{L^1}^{\frac{1}{p}}  + o(1)_{n\to \infty}.
	\end{equation}
	Now, by combining \eqref{eq:localization-0}, \eqref{est:decompose-Choquard-4} and the fact that $\lim_{n \to \infty}\| \chi_n u_n\|_{L^2}=\tilde \ell < 1 = \| u_n \|_{L^2}$, we obtain
	\begin{align} \nn
		\| \sqrt{\cL_{\mu_0}}u_n \|_{L^2}^{2\theta} \|u_n \|_{L^2}^{2(1-\theta)} 
		&\geq \left( \| \sqrt{\cL_{\mu_0}}(\chi_n u_n) \|_{L^2} + o(1)_{n\to \infty} \right)^{2\theta}\left( \frac{\| \chi_n u_n\|_{L^2} + o(1)_{n\to \infty}}{\tilde \ell} \right)^{2(1-\theta)}  \\ \nn
		\label{est:decompose-oper-1}
		&\geq \tilde \ell^{2(\theta-1)}\sC^2 \| (I_\alpha * |\chi_n  u_n|^p)|\chi_n u_n|^p \|_{L^1}^{\frac{1}{p}}  + o(1)_{n\to \infty} \\ \nn
		&\geq \tilde \ell^{2(\theta-1)}\sC^2 \| (I_\alpha * | u_n|^p)|u_n|^p \|_{L^1}^{\frac{1}{p}}  + o(1)_{n\to \infty}.
	\end{align}
	It follows that
	$$
	\lim_{n \to \infty}\frac{\| \sqrt{\cL_{\mu_0}}u_n \|_{L^2}^{\theta} \|u_n \|_{L^2}^{1-\theta}}{\| (I_\alpha * | u_n|^p)|u_n|^p \|_{L^1}^{\frac{1}{2p}}} \geq \tilde \ell^{\theta-1} \sC > \sC,
	$$
	where we have use the fact that $\tilde \ell, \theta \in (0,1)$. Therefore  we reach a contradiction as  $\{u_n\}$ is a minimizing sequence for  \eqref{eq:CHGN-def}. Thus we have proved that $\| \tilde u\|_{L^2}=1$.
	
	Now we are going to show that $\tilde u$ is a minimizer of \eqref{eq:CHGN-def}. Since $u_n  \wto u$ weakly in $L^2(\R^d)$, we derive that $u_n\to \tilde u$ in $L^2(\R^d)$, and hence $u_n\to \tilde u$ in $L^q(\R^d)$ by interpolation, for any $2 \leq q <2^*$.  Thus we conclude that $\tilde u$ is a minimizer and  $u_n\to \tilde u$ strongly in $\cQ_{\mu_0}$. 
\end{proof}

\subsection{Minimizers for the Hardy-Gagliardo-Nirenberg inequality} 
In this subsection, we will use Lemma Lemma \ref{lem:HGN} and Theorem \ref{precompact} to prove \ref{thm:HGN}.
 
\begin{proof}[\textbf{Proof of Theorem \ref{thm:HGN}}] The existence of a minimizer for \eqref{eq:CHGN-def} is proved in Lemma \ref{lem:HGN}.
	
Now let $u \in \cQ_{\mu_0}$ be a radial minimizer for \eqref{C-W-def}. Since $C_0^\infty(\R^d \setminus \{0\})$ is dense in $\cQ_{\mu_0}$, there exists a sequence $\{u_n\} \subset C_0^\infty(\R^d \setminus \{0\})$ such that $u_n \to u$ in $\cQ_{\mu_0}$. Consequently, $\sqrt{\cL_{\mu_0}}u_n \to \sqrt{\cL_{\mu_0}}u_n$, $u_n \to u$ in $L^2(\R^d)$ and $\|(I_\alpha * |u_n|^p)|u_n|^{p-2}u_n\|_{L^1} \to \|(I_\alpha * |u|^p)|u|^{p-2}u\|_{L^1}$ thanks to the Hardy-Littlewood-Sobolev inequality and H\"older's inequality. Therefore $\{u_n\}$ is a minimizing sequence for \eqref{C-W-def}. By standard argument based on the diamagnetic inequality, we deduce that $\{|u_n|\}$ is a minimizing sequence and hence by Theorem \ref{precompact}, we derive that $|u|$ is minimizer for \eqref{C-W-def}. By a suitable scaling as in the proof of Lemma \ref{lem:HGN}, $|u|$ can be written as $|u(x)|= \mu Q(\lambda x)$ for some $\mu>0,\lambda>0$ and $Q \in \cG$.

Put $\eta(x) = u(x)/|u(x)|$. Since $|\eta|^2=1$, we have $\Re(\bar \eta \nabla \eta)=0$. Hence
$$ \| \sqrt{\cL_{\mu_0}}u\|_{L^2}^2 = \| \sqrt{\cL_{\mu_0}}|u|\|_{L^2}^2 + \int_{\R^d}|\nabla \eta|^2|u|^2dx.
$$ 
If $|\nabla \eta| \not \equiv 0$ then $\sC=\cW(|u|) < \cW(u) = \sC$, which is a contradiction. Therefore,  $|\nabla \eta|  \equiv 0$ and hence $\eta$ is a constant. Thus $u(x)=zQ(\lambda x)$for $z \in \C$, $\lambda>0$ and $Q \in \cG$ and the proof is complete. 
\end{proof}

\section{Global existence and finite-time blow-up} \label{sec:global-blowup}
\subsection{Global Existence}
We first recall a local existence result for \eqref{eq:NLS}.
\begin{proposition} \label{prop:localexistence-abs}
\noindent {\em (1) (Local existence and conservation laws)}	Assume $(d-4)_+ < \alpha < d$ and $2<p<\frac{d+\alpha}{d-2}$. Then for any $u_0\in \cQ_{\mu_0}$, there
	exists $T=T(\| u_0 \|_{\cQ_{\mu_0}})$ such that \eqref{eq:NLS}
	admits
	a unique local weak solution $u\in C([0,T], \cQ_{\mu_0}) \cap C^1([0,T];\cQ_{\mu_0}^*)$. Moreover, the mass and energy conservation laws hold, namely 
	\begin{equation} \label{conservation-law}
	\|u(t)\|_{L^2}=\|u_0\|_{L^2}, \quad E(u(t))=E(u_0) \text{ for any }  t \in [0,T],
	\end{equation}
where $E(v)$ and $G(v)$ are defined in \eqref{energy}. \medskip

\noindent {\em (2) (Unique continuation and blow-up alternative)} The solution $u$ admits a unique continuation up to a maximum time $T^*$ in the sense that either $T^*=\infty$, namely the solution is global, or $T^*<\infty$ and $\lim_{t \uparrow T^*}\| \sqrt{\cL_{\mu_0}}u\|_{L^2} = \infty$, namely the solution blows up in finite time.	
\end{proposition}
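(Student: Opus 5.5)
The plan is to derive the statement from the abstract well-posedness theory for Schr\"odinger equations $i\partial_t u = Au - g(u)$ in \cite[Theorem 1.6]{Suz-20}, with $A=\cL_{\mu_0}$ and $g(u)=(I_\alpha*|u|^p)|u|^{p-2}u$. That theory uses an energy (compactness) method in the form domain of a nonnegative self-adjoint operator and needs no Strichartz estimates, which is essential here because $\cQ_{\mu_0}\neq H^1$. The hypotheses to check fall into three groups.

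\textbf{(i) Functional setting.} $A=\cL_{\mu_0}$ is nonnegative and self-adjoint on $L^2$ by the Friedrichs extension and \eqref{hardy-ineq}. Its form domain is $X=\cQ_{\mu_0}$, and by \eqref{eq:Q-in-Lp} we have $X\hookrightarrow L^q(\R^d)$ for every $q\in[2,2^*)$.

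\textbf{(ii) Structure of the nonlinearity.}
\begin{itemize}
\item Gauge invariance $g(e^{i\theta}u)=e^{i\theta}g(u)$ is immediate.
\item $g$ is the Fr\'echet derivative of $G$ in \eqref{energy}.
\item $\Im\big(g(u)\bar u\big)=0$ pointwise. This yields conservation of mass once the equation is paired with $u$ in the $\cQ_{\mu_0}^*$--$\cQ_{\mu_0}$ duality.
\item Boundedness of $g$ from $X$ into $X^*$, mapping bounded sets to bounded sets.
\end{itemize}
For the last item, set $r=\frac{2dp}{d+\alpha}\in(2,2^*)$. The Hardy--Littlewood--Sobolev inequality \eqref{inequa:HLS-3} with $q=\frac{2d}{d+\alpha}$ gives $I_\alpha*|u|^p\in L^{\frac{2d}{d-\alpha}}$ with norm at most $C\|u\|_{L^r}^p$. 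H\"older's inequality then gives $\|g(u)\|_{L^{r'}}\lesssim \|u\|_{L^r}^{2p-1}$, and $L^{r'}\hookrightarrow X^*$.

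\textbf{(iii) Local Lipschitz estimate for uniqueness.} Suzuki's uniqueness criterion needs an estimate of the form
\[
\|g(u)-g(v)\|_{L^{\rho'}}\lesssim \big(\|u\|_{L^{\rho_1}}+\|v\|_{L^{\rho_1}}\big)^{2p-2}\|u-v\|_{L^{\rho}}
\]
for suitable exponents $\rho,\rho_1\in[2,2^*)$, or its $X$-valued variant.
\begin{itemize}
\item Write $g(u)-g(v)=\big(I_\alpha*(|u|^p-|v|^p)\big)|u|^{p-2}u+(I_\alpha*|v|^p)\big(|u|^{p-2}u-|v|^{p-2}v\big)$.
\item Use $\big||u|^p-|v|^p\big|\lesssim(|u|^{p-1}+|v|^{p-1})|u-v|$.
\item Use $\big||u|^{p-2}u-|v|^{p-2}v\big|\lesssim(|u|^{p-2}+|v|^{p-2})|u-v|$. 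This is the point where $p>2$ enters: for $p<2$ the map $u\mapsto|u|^{p-2}u$ is only H\"older continuous, which is the known obstruction for $\frac{d+\alpha}{d}\le p<2$.
\item Apply HLS and H\"older to each piece.
\end{itemize}
The exponent bookkeeping must keep every Lebesgue exponent strictly below $2^*$, because $\cQ_{\mu_0}$ does not embed into $L^{2^*}$. I expect the lower bound $\alpha>(d-4)_+$ together with $p<\frac{d+\alpha}{d-2}$ to be exactly what makes an admissible choice of exponents exist. A crude version of this check: the HLS pair plus the extra factor $|u|^{p-2}$ force $\frac{d-\alpha}{2d}+\frac{(p-2)}{\rho_1}+\frac1\rho<\frac12+\frac1d$-type constraints, and these are solvable only in that range. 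This bookkeeping is the main obstacle, and I would carry it out carefully, following the analogous verification for $p=2$ in \cite{Miz-17,Suz-16}.

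Once (i)--(iii) hold, \cite[Theorem 1.6]{Suz-20} delivers, for $u_0\in\cQ_{\mu_0}$, a time $T$ depending only on $\|u_0\|_{\cQ_{\mu_0}}$ and a unique weak solution in $C([0,T];\cQ_{\mu_0})\cap C^1([0,T];\cQ_{\mu_0}^*)$ satisfying \eqref{eq:Duhamelu}. The same theorem gives conservation of mass and of the energy $E$, the latter being justified through the Hamiltonian structure $g=G'$. Part (2) then follows by the standard continuation argument. Since $T$ depends only on $\|u(t)\|_{\cQ_{\mu_0}}$, and the $L^2$ part of this norm is conserved, the solution extends as long as $\|\sqrt{\cL_{\mu_0}}u(t)\|_{L^2}$ stays bounded. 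Therefore, if $T^*<\infty$, this quantity must tend to $\infty$ as $t\uparrow T^*$; uniqueness makes the maximal extension well defined.
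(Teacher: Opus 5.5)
Your plan matches the paper's argument: part (1) is taken directly from \cite[Theorem 1.6]{Suz-20}, with no separate verification of its hypotheses in the paper, and part (2) is the standard continuation argument, for which the paper refers to \cite[page 29]{MukNamNgu-2021}. One remark: the exponent bookkeeping you call the main obstacle is immediate with $\rho=\rho_1=\frac{2dp}{d+\alpha}\in(2,2^*)$, since HLS from $L^{\frac{2d}{d+\alpha}}$ to $L^{\frac{2d}{d-\alpha}}$ followed by H\"older closes exactly in $L^{\rho'}$; also, $\alpha>(d-4)_+$ is not an extra constraint but only the condition that the range $2<p<\frac{d+\alpha}{d-2}$ is nonempty.
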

Note that  the local existence, uniqueness and conservation laws  for problem \eqref{eq:NLS} were obtained in \cite[Theorem 1.6]{Suz-20} (even for more general type of potential and nonlinearity), while the unique continuation and blow-up alternative can be proved by arguing along the same line as in \cite[page 29]{MukNamNgu-2021}.

To prove the global existence for  \eqref{eq:NLS}, we adapt the standard arguments in \cite{Wei-83,KenMer-08} to the framework with Hardy potential and Choquard nonlinearity. The proof is presented below.

 \label{subsec:global}
\begin{proof}[\textbf{Proof of part (1) of Theorem \ref{th:global-blowup}}]
 By the blow-up alternative, it is sufficient to show that $\|\sqrt{\mathcal{L}_{\mu_0}} u(t)\|_{L^2}$ remains bounded uniformly in $t$. By the mass and energy conservation,  estimate \eqref{eq:CHGN-def} for any $t \in [0,T)$, 
\begin{equation}\label{eqB1}
\begin{aligned}
E\left(u_0\right)=E(u(t)) & =\frac{1}{2}\|\sqrt{\mathcal{L}_{\mu_0}} u(t)\|_{L^2}^2-\frac{1}{2p} \int_{\R^d} (I_\alpha * |u(t)|^p)|u(t)|^p dx \\
& \geq \frac{1}{2}\|\sqrt{\mathcal{L}_{\mu_0}} u(t)\|_{L^2}^2-\frac{1}{2p \sC^{2p}}\|\sqrt{\mathcal{L}_{\mu_0}} u(t)\|_{L^2}^{2p \theta}\left\|u_0\right\|_{L^2}^{2p(1-\theta)}.
\end{aligned}
\end{equation}
We will consider separably 3 cases. \medskip

\noindent (i) \textbf{Case 1:} $d-2<\alpha$ and $2<p<\frac{d+\alpha+2}{d}$. In this case $p \theta<1$, therefore, by Young's inequality, for any $\varepsilon>0$ small, we obtain
$$
\|\sqrt{\mathcal{L}_{\mu_0}} u(t)\|_{L^2}^{2p\theta} \leq \varepsilon\|\sqrt{\mathcal{L}_{\mu_0}} u(t)\|_{L^2}^2+C_{\varepsilon} .
$$
Inserting this into \eqref{eqB1} implies that $\|\sqrt{\mathcal{L}_{\mu_0}} u(t)\|_{L^2}$ is uniformly bounded in $t>0$. \medskip

\noindent (ii)  \textbf{Case 2:} $d-2<\alpha$ and $p=\frac{d+\alpha+2}{d}$. In this case, $p \theta=1$ and the sharp constant in (\ref{eq:CHGN-Q}) satisfies
$ p \sC^{2p}=M_{{\rm gs}}^{2(p-1)}$. 
Plugging this into  \eqref{eqB1} yields
$$
E\left(u_0\right)=E(u(t)) \geq \frac{1}{2}\|\sqrt{\cL_{\mu_0}} u(t)\|_{L^2}^2-\frac{1}{2}\|\sqrt{\mathcal{L}_{\mu_0}} u(t)\|_{L^2}^2\left(\frac{\|u_0\|_{L^2}}{M_{{\rm gs}} }\right)^{2(p-1)} .
$$
Therefore, if $\left\|u_0\right\|_{L^2} < M_{{\rm gs}}$, then $\|\sqrt{\cL_{\mu_0}} u(t)\|_{L^2}$ is bounded uniformly in $t > 0$. \medskip

\noindent (iii) \textbf{Case 3:} $\max\{2,\frac{d+\alpha+2}{d}\} <p< \frac{d+\alpha}{d-2}$. Multiplying \eqref{eqB1} by $\left\|u_0\right\|_{L^2}^\kappa$ with $\kappa$ given in \eqref{kappa}, 
we obtain
\begin{equation}\label{eqB2}
\begin{aligned}
    E(u_0)\|u_0\|_{L^2}^\kappa & \geq \frac{1}{2}\|\sqrt{\cL_{\mu_0}} u(t)\|_{L^2}^2\|u_0\|_{L^2}^\kappa-\frac{1}{2p \sC^{2p}}\left(\|\sqrt{\cL_{\mu_0}} u(t)\|_{L^2}^2\|u_0\|_{L^2}^\kappa\right)^{p \theta} \\
& =P\left(\|\sqrt{\cL_{\mu_0}} u(t)\|_{L^2}^2\|u_0\|_{L^2}^\kappa\right),
\end{aligned}
\end{equation}
where
$$
P(s):=\frac{1}{2} s-\frac{1}{2p \sC^{2p}} s^{p \theta}, \quad s \geq 0 .
$$
Note that $P$ has a unique critical point $s^*=\left(\frac{\sC^{2p}}{\theta}\right)^{\frac{1}{p \theta-1}}$ which corresponds to its maximum point. Combining \eqref{eq:CHGN-Q} and the Poho\v{z}aev identities \eqref{eq:Pohozaev-1} gives
$$s^*=\left(\frac{\sC^{2p}}{\theta}\right)^{\frac{1}{p \theta-1}} = H_{{\rm gs}}^2 M_{{\rm gs}}^\kappa.$$
Moreover, by using the Pohozaev identities \eqref{eq:Pohozaev-1}, the definition of $E_{{\rm gs}}$, we can show that
$$
P(s^*)=\left(\frac{1}{2}-\frac{1}{2p\theta}\right)\left( \Big(\frac{1-\theta}{\theta}\Big)^{1-\frac{\theta}{2} } \|Q\|_{L^2}^{2p-2}\right)^{ \frac{1}{p\theta -1}} =E_{{\rm gs}} M_{{\rm gs}}^\kappa.
$$

Now we prove that if \eqref{eqB3} holds then
\begin{equation}\label{eqB4}
\|\sqrt{\cL_{\mu_0}} u(t)\|_{L^2}^2\|u_0\|_{L^2}^\kappa<s^*
\end{equation}
for all $t>0$. First, (\ref{eqB4}) holds at $t=0$ by the second condition in \eqref{eqB3}. Moreover, from (\ref{eqB2}) and the first condition in \eqref{eqB3} it follows that
$$
P\left(\|\sqrt{\cL_{\mu_0}} u(t)\|_{L^2}^2\|u_0\|_{L^2}^\kappa\right) \leq E(u_0)\|u_0\|_{L^2}^\kappa < E_{{\rm gs}} M_{{\rm gs}}^\kappa=P(s^*).
$$
Therefore, since $P$ is strictly increasing in $[0, s^*]$, by the continuity of the mapping $t \mapsto\|\sqrt{\cL_{\mu_0}} u(t)\|_{L^2}^2$ $\|u_0\|_{L^2}^\kappa$, we conclude that $\|\sqrt{\cL_{\mu_0}} u(t)\|_{L^2}^2\|u_0\|_{L^2}^\kappa$ will never reach the maximum point $s^*$, namely \eqref{eqB4} holds true for all $t>0$. Consequently, \eqref{eqB4} implies that $\|\sqrt{\mathcal{L}_{\mu_0}} u(t)\|_{L^2}$ is bounded uniformly in $t>0$, which ensures the global existence of $u$.
\end{proof}

We end this section by a remark regarding the orbital stability of solutions to \eqref{eq:NLS}. 
\begin{remark} For $a>0$, define
\begin{align*} 
\cA_a&:=\{v \in \cQ_{\mu_0}: v \text{ is radial with } \| v \|_{L^2} = a \}, \\
\kappa_a&:= \inf\{E(v):  v \in \cA_a \}, \quad \quad \cS_a:=\{v \in \cA_a: E(v)=\kappa_a \}.
\end{align*}
By adapting the argument in the proof of \cite[Theorem 1.1]{TraZog-15}, it could be shown that for any $a>0$, the set $\cS_a$ is \textit{orbitally stable}, namely for any $\varepsilon>0$, there exists $\delta>0$ such that for any global solution $u$ of \eqref{eq:NLS} with ${\rm dist}(u_0,\cS_a)<\delta$, there holds 
${\rm dist}(u(t),\cS_a)<\varepsilon$ for every $t \geq 0$,
where 
$$ {\rm dist}(v,\cS_a):=\inf_{w \in \cS} \| v - w \|_{\cQ}.
$$
\end{remark}

\subsection{Finite-time blow-up} \label{subsec:blowup}

We recall the Virial identity from \cite[page 7664]{Suz-20}.

\begin{proposition} \label{prop:Virial}
	Assume $2<p<\frac{d+\alpha}{d-2}$ and $u_0 \in \Sigma_*$. Let $u$ be a solution to \eqref{eq:NLS} on $[0,T]$. Then for any $t \in [0,T]$, $u(t) \in \Sigma_*$ and the following identities hold
	\begin{align} 
		\label{exp:Virial u-dder}
		\Gamma_u''(t) &=16E(u_0) + 8(d+\alpha+2-dp)G(u(t)), \quad t \in [0,T],
	\end{align}
	where 
	\begin{equation} \label{VRI} \Gamma_u(t) = \| x u(t) \|_{L^2}^2, \quad t \in [0,T].
	\end{equation}
\end{proposition}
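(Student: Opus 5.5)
The Virial identity is derived first for regularized solutions and a truncated weight. We then pass to the limit in the truncation and in the regularization, using the local theory in Proposition~\ref{prop:localexistence-abs}. Here $\Sigma_*$ denotes $\{v\in\cQ_{\mu_0}: |x|v\in L^2\}$.

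\textbf{Step 1: persistence of the weight.} Let $\psi_R(x)=R^2\psi(|x|^2/R^2)$, where $\psi$ is smooth and bounded with $\psi(s)=s$ for $s\le 1$ and $|\psi'|\le 1$. Define $\Gamma_R(t)=\int\psi_R|u|^2dx$. Testing the equation (in the $\cQ_{\mu_0}^*$--$\cQ_{\mu_0}$ duality) against $\psi_R\bar u$ gives
$$\Gamma_R'(t)=2\,\mathrm{Im}\int \bar u\nabla\psi_R\cdot\nabla u\,dx .$$
The Choquard term drops out because $(I_\alpha*|u|^p)|u|^p$ is real. The commutator $[\cL_{\mu_0},\psi_R]$ is a first-order operator with no Hardy contribution, since $|x|^{-2}$ commutes with multiplication. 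To make sense of $\bar u\nabla u$ near $0$, I would use the ground state representation $u=|x|^{-(d-2)/2}v$, as in Proposition~\ref{prop:Pohozaev}. Because $\nabla\psi_R$ vanishes linearly at the origin, the integral is controlled by $\|\sqrt{\psi_R}u\|_{L^2}\|\sqrt{\cL_{\mu_0}}u\|_{L^2}$. Gronwall's inequality and letting $R\to\infty$ then show that $xu(t)\in L^2$ on $[0,T]$.

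\textbf{Step 2: second derivative.} I would differentiate $\Gamma_R'$ once more, first for smooth data in $C_c^\infty(\R^d\setminus\{0\})$ with a regularized flow, and then pass to the limit. The result should be a localized Virial formula:
$$\Gamma_R''=4\int \partial_{jk}\psi_R\,\mathrm{Re}(\partial_j\bar u\partial_k u)-\int\Delta^2\psi_R|u|^2-4\mu_0\int\frac{x\cdot\nabla\psi_R}{|x|^4}|u|^2+(\text{Choquard term}).$$
The Choquard term is computed exactly as in Step 3 of Proposition~\ref{prop:Pohozaev}, using $\nabla I_\alpha$. With $\nabla\psi_R=2x$ on $B_R$, it equals
$$-\frac{4(dp-d-\alpha)}{p}\int(I_\alpha*|u|^p)|u|^p\,dx$$
plus error terms supported where $|x|\gtrsim R$. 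On $B_R$ the kinetic and Hardy pieces combine into $8\|\sqrt{\cL_{\mu_0}}u\|^2_{L^2}$. This pairing is essential: the two pieces must stay tied together, since separately they may diverge.

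\textbf{Step 3: limits and identification.} As $R\to\infty$, the tail terms vanish by dominated convergence, since $u\in\cQ_{\mu_0}$ and the error weights are bounded. This gives
$$\Gamma''=8\|\sqrt{\cL_{\mu_0}}u\|_{L^2}^2-4(dp-d-\alpha)\cdot 2G(u)\cdot\tfrac{2p}{2p}.$$
Substituting $\|\sqrt{\cL_{\mu_0}}u\|^2_{L^2}=2E(u_0)+2G(u)$ and using energy conservation yields $16E(u_0)+8(d+\alpha+2-dp)G(u)$. To pass from regularized data to general $u_0$, I would use continuous dependence in $\cQ_{\mu_0}$ together with the identity integrated twice in time.

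\textbf{Main obstacle.} The hardest part is justifying the local Hardy cancellation in Step 2 near the origin. In the critical case the term $\int\mathrm{Re}(\partial_j\bar u\partial_ku)\partial_{jk}\psi$ is not separately finite, so I would work with $v=|x|^{(d-2)/2}u$ and compute everything in the weighted space $H^1(|x|^{-(d-2)})$, adding a cutoff $\varphi_\epsilon$ at the origin as in the Pohozaev proof. Under this substitution the combination becomes $\int|x|^{-(d-2)}|\nabla v|^2$ up to boundary terms on $A_{\epsilon,2\epsilon}$. These boundary terms vanish as $\epsilon\to0$ by the same estimates used in Steps 1–2 of Proposition~\ref{prop:Pohozaev}.
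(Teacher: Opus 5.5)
The paper does not prove this proposition; it quotes the identity from \cite{Suz-20}. Several parts of your outline are fine: Step 1, the Choquard computation, the removal of the truncation at infinity, and the final algebra $8\|\sqrt{\cL_{\mu_0}}u\|_{L^2}^2-8(dp-d-\alpha)G(u)=16E(u_0)+8(d+\alpha+2-dp)G(u)$. The gap is exactly at the step you call the main obstacle. The terms created by the cutoff $\varphi_\epsilon$ at the origin do \emph{not} vanish ``by the same estimates used in Steps 1--2 of Proposition~\ref{prop:Pohozaev}''.

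To see why, write $v=|x|^{\frac{d-2}{2}}u$, $w=|x|^{-(d-2)}$ and $\Delta_w=w^{-1}\mathrm{div}(w\nabla\,\cdot)$. The linear part of the equation is then $iv_t=-\Delta_w v$, and for a weight $\psi$ the localized identity reads
\begin{equation*}
\frac{d^2}{dt^2}\int_{\R^d}\psi\,|u|^2\,dx=4\,\mathrm{Re}\int_{\R^d} w\,\partial_{jk}\psi\,\partial_j\bar v\,\partial_k v\,dx-\int_{\R^d} w\,(\Delta_w^2\psi)\,|v|^2\,dx+(\text{Choquard terms}).
\end{equation*}
Take $\psi_\epsilon=\varphi_\epsilon|x|^2$ near $0$.

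\textbf{First term.} It is harmless: $\partial_{jk}\psi_\epsilon=2\varphi_\epsilon\delta_{jk}+O(1)\1_{A_{\epsilon,2\epsilon}}$, and $\int_{A_{\epsilon,2\epsilon}}w|\nabla v|^2\to0$, exactly as in the Pohozaev proof.

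\textbf{Second term.} This is the problem. Since $\Delta_w$ is homogeneous of degree $-2$ and $\Delta_w|x|^2=4$, we have $\Delta_w^2\psi_\epsilon=\epsilon^{-2}(\Delta_w^2\Phi)(x/\epsilon)$ with $\Phi(y)=\varphi(y)|y|^2$. So the term equals $\epsilon^{-2}\int_{A_{\epsilon,2\epsilon}}(\Delta_w^2\Phi)(x/\epsilon)|u|^2dx$. This is an $O(\epsilon^{-2})$ multiplier acting on $|u|^2$, not a bounded multiplier acting on an integrable density, so dominated convergence does not apply. For the solutions that matter here it is of order one for every $\epsilon$. If $|x|^{\frac{d-2}{2}}|u|\to c>0$ at the origin, as for ground states (Proposition~\ref{prop:asympt0}) and hence for \eqref{eq:u-0-T}, the term tends to $c^2\int w\,\Delta_w^2\Phi\,dy$. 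That limit vanishes only because $w\Delta_w^2\Phi=\mathrm{div}(w\nabla\Delta_w\Phi)$ is an exact divergence, not because the term is small. In the $u$-variables the same thing happens: the $\Delta^2\psi_\epsilon$, kinetic and Hardy pieces each leave order-one contributions on $A_{\epsilon,2\epsilon}$ (e.g. $\int_{A_{\epsilon,2\epsilon}}|x|^{-2}|u|^2$), which cancel only in total.

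\textbf{Why the Pohozaev estimates do not cover it.} The Pohozaev proof never meets this term. There the equation is stationary and real, so one integration by parts against $\phi_k\varphi_\epsilon\,x\cdot\nabla v$ leaves only bounded multipliers such as $x\cdot\nabla\varphi_\epsilon$. In the evolution, the time derivative also falls on $\bar v$. This produces $\mathrm{Re}\int\bar v\,(\Delta_w v)\,\mathrm{div}(w\nabla\psi_\epsilon)\,dx$, which cannot be lowered by using the equation.

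\textbf{Why smallness is not automatic.} For general $v\in H^1(\R^d;|x|^{-(d-2)})$ there is no limit at $0$; for instance $v$ may grow like $(\log\frac1{|x|})^{b}$ with $0<b<\frac12$. Writing the term as $-\int w\nabla(\Delta_w\psi_\epsilon)\cdot\nabla|v|^2$, the best general bound is
$$\epsilon^{-1}\|u\|_{L^2(A_{\epsilon,2\epsilon})}\,\|\,|x|^{-\frac{d-2}{2}}\nabla v\|_{L^2(A_{\epsilon,2\epsilon})}=O\big((\log\tfrac1\epsilon)^{1/2}\big)\cdot o(1).$$
Regularizing the data does not remove this issue. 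The flow does not preserve $C_c^\infty(\R^d\setminus\{0\})$, and domain functions of $\cL_{\mu_0}$ generically behave like $|x|^{-\frac{d-2}{2}}$ at the origin.

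\textbf{A possible fix.} You need an extra device. One option is to average the time-integrated identity over $\epsilon\in[\epsilon_0^2,\epsilon_0]$ with respect to $d\epsilon/(\epsilon\log\frac1{\epsilon_0})$. Using $\int_{\mathbb{S}^{d-1}}|v(r\theta)|^2d\theta\lesssim(1+\log\frac1r)\|u\|_{\cQ_{\mu_0}}^2$ for $r<1$, the averaged error is
$$\lesssim\|u\|_{\cQ_{\mu_0}}\Big(\int_{B_{2\epsilon_0}}|x|^{-(d-2)}|\nabla v|^2dx\Big)^{1/2}.$$
This is small uniformly in $t\in[0,T]$, since $u\in C([0,T];\cQ_{\mu_0})$. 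Alternatively, use an argument that avoids cutting off at the origin altogether. With such a step added, your outline goes through.
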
 

\begin{proof}[\textbf{Proof of part (2) of Theorem \ref{th:global-blowup}}] We will employ the argument in \cite{Suz-17} for the case of negative energy and the argument in \cite{KenMer-08,HolRou-07} for the other case. 

Recall that $\Gamma_u(t)$ is defined in \eqref{VRI}. 
Suppose by contradiction that $u$ is a global solution of \eqref{eq:NLS} on $[0, \infty)$. First we are going to prove the following

\textbf{Claim:} If $E(u_0)<0$ or 
\begin{equation} \label{cond-Eu-Q}
E(u_0)\left\|u_0\right\|_{L^2}^\kappa<E_{\rm gs}M_{\rm gs}^\kappa, \quad\|\sqrt{\cL_{\mu_0}} u_0\|_{L^2}^2\|u_0\|_{L^2}^\kappa>H_{\rm gs}^2 M_{\rm gs}^\kappa,
\end{equation}
where $\kappa$ is given in \eqref{kappa}, then there exists a constant $a>0$ depending only on $d,\alpha,p,u_0$ such that for any $T>0$,
\begin{equation}\label{eqB5}
\Gamma^{\prime \prime}(t) \leq-a<0, \quad \forall t \in [0,T].
\end{equation}
 
Indeed, if $E\left(u_0\right)<0$, then from the Virial identity \eqref{exp:Virial u-dder} and the fact that $p>\frac{d+\alpha+2}{d}$, we derive  that
$$
\Gamma^{\prime \prime}(t)=16 E(u_0)+\frac{4(d+\alpha+2-dp)}{p} \int_{\R^d}( I_\alpha * |u(t,\cdot)|^p)|u(t, x)|^p d x \leq 16 E(u_0)<0 .
$$

Now instead of $E\left(u_0\right)<0$, we assume \eqref{cond-Eu-Q}. We adapt the argument of Holmer-Roudenko in \cite{HolRou-07}. We note that
$$
\|\sqrt{\cL_{\mu_0}} u_0\|_{L^2}^2\left\|u_0\right\|_{L^2}^\kappa>H_{\rm gs}^2 M_{\rm gs}^\kappa=s^*
$$
and by \eqref{eqB2} and the first inequality in \eqref{cond-Eu-Q}, for any $t \in (0,T)$,
\begin{equation} \label{P<Ps*}
P\left(\|\sqrt{\mathcal{L}_{\mu_0}} u(t)\|_{L^2}^2\left\|u_0\right\|_{L^2}^\kappa\right) \leq E\left(u_0\right)\left\|u_0\right\|_{L^2}^\kappa<E_{\rm gs} M_{\rm gs}^\kappa=P(s^*).
\end{equation}
Since $P$ is strictly increasing in $[0,s^*]$ and strictly decreasing in $[s^*, \infty)$, by the continuity of $t \mapsto\|\sqrt{\cL_{\mu_0}} u(t)\|_{L^2}^2\|u_0\|_{L^2}^\kappa$, we deduce from \eqref{P<Ps*} that
\begin{equation}\label{eqB6}
\|\sqrt{\cL_{\mu_0}} u(t)\|_{L^2}^2\|u_0\|_{L^2}^\kappa>s^*, \quad \forall t \in (0,T).
\end{equation}
Finally, multiplying both sides of the Virial identity\eqref{exp:Virial u-dder} by $\|u_0\|_{L^2}^\kappa$, then using \eqref{eqB6} together with the assumption $p>\frac{d+\alpha+2}{d}$ and the relation
$H_{\rm gs}^2= \frac{2(dp-d-\alpha)}{dp-d-\alpha-2}E_{\rm gs}$,
we obtain
\begin{align*}
\Gamma_{u}''(t)\|u_0\|_{L^2}^\kappa & =8(dp-d-\alpha)  E(u_0)\|u_0\|_{L^2}^\kappa - 4(dp-d-\alpha-2)\|\sqrt{\cL_{\mu_0}} u(t)\|_{L^2}^2\|u_0\|_{L^2}^\kappa \\
& \leq 8(dp-d-\alpha) \left(E(u_0)\|u_0\|_{L^2}^\kappa-E_{\rm gs}M_{\rm gs}^\kappa\right)<0 .
\end{align*}
Therefore \eqref{eqB5} holds true.

By Taylor's expansion, 
$0 \leq \Gamma_u(t)=\Gamma_u(0)+t \Gamma_u'(0)+\frac{t^2}{2} \Gamma_u''(t^*)$ for some  $t^* \in[0, t]$,
we infer from \eqref{eqB5} that
$0 \leq \Gamma_u(t) \leq \Gamma_u(0)+t \Gamma_u'(0)-\frac{t^2}{2} a$ for all $t \in [0,T]$. In particular, this implies that $aT^2 \leq \Gamma_u(0) + T\Gamma_u'(0)$ for any $T>0$, which is a contradiction. Thus $u$ must blow up in finite time.
\end{proof}
\section{Minimal mass blow-up solutions} \label{sec:minimalmass}
In this section, we will prove Theorem \ref{thm:min-mass-sol} in the mass-critical case $p=\frac{d+\alpha +2}{d}$. In this case the sharp constant in the Hardy-Gagliardo-Nirenberg inequality \eqref{eq:CHGN-def} is
\begin{equation}\label{bp1}
\sC=\left(\frac{d}{d+2+\alpha}\right)^{\frac{d}{2(d+2+\alpha)}}M_{\rm gs}^{\frac{2+\alpha}{d+2+\alpha}} .
\end{equation}

We start with the following result.

\begin{lemma}[Mass concentration as $t \rightarrow T$] \label{lem:massconcentration}
   Assume $d-2<\alpha<d$ and $p=\frac{d+\alpha+2}{d}$. Let $u$ be a solution of \eqref{eq:NLS} in $[0, T)$ with $\left\|u_0\right\|_{L^2}=M_{\rm gs}$ such that $\lim _{t \nearrow_T}\|\sqrt{\cL_{\mu_0}} u(t)\|_{L^2}=+\infty$. Let $\{t_n\} \subset (0,T)$ be an increasing sequence converging to $T$ and denote $u_n(x)=u\left(t_n, x\right)$. Then
$$
\left|u_n\right|^2 \rightarrow M_{\rm gs}^2 \delta_0 \quad \text { in }\cD'(\R^d).
$$
Here $\cD'(\R^d)$ denotes the space of distributions in $\R^d$ and $\delta_0$ is the Dirac measure concentrated at $x=0$.
\end{lemma}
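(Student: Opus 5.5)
We follow the classical rescaling argument of Merle and Hmidi--Keraani, with the compactness result Theorem \ref{precompact} playing the role of profile decomposition. Set
\[
\rho_n:=\frac{\|\sqrt{\cL_{\mu_0}}Q\|_{L^2}}{\|\sqrt{\cL_{\mu_0}}u_n\|_{L^2}}=\frac{H_{\rm gs}}{\|\sqrt{\cL_{\mu_0}}u_n\|_{L^2}}\to 0,
\]
and define the $L^2$-invariant rescaling $v_n(x):=\rho_n^{d/2}u_n(\rho_n x)$. Since $\cL_{\mu_0}$ is homogeneous of degree $-2$ under dilations about the origin, and since dilations are centred at $0$, the origin is preserved. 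Hence
\[
\|v_n\|_{L^2}=M_{\rm gs},\qquad \|\sqrt{\cL_{\mu_0}}v_n\|_{L^2}=H_{\rm gs}.
\]
In the mass-critical case $p\theta=1$, so $G(v_n)=\rho_n^2G(u_n)$. Energy conservation gives
\[
E(v_n)=\rho_n^2E(u_0)\to0,
\qquad\text{so}\qquad
G(v_n)\to\tfrac12 H_{\rm gs}^2 .
\]

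Next we check that $\{v_n\}$ is a minimizing sequence for \eqref{C-W-def}. Using $p\sC^{2p}=M_{\rm gs}^{2(p-1)}$ from Case 2 of the global existence proof, we have $E(Q)=0$ for $Q\in\cG$, so $\tfrac12H_{\rm gs}^2=\tfrac1{2p}N_{\rm gs}$. Therefore
\[
\cW(v_n)^{2p}=\frac{H_{\rm gs}^{2p\theta}M_{\rm gs}^{2p(1-\theta)}}{2pG(v_n)}\to \frac{H_{\rm gs}^{2}M_{\rm gs}^{2p(1-\theta)}}{N_{\rm gs}}=\cW(Q)^{2p}=\sC^{2p}.
\]
The sequence is bounded in $\cQ_{\mu_0}$ with $L^2$ norm bounded below, and the assumptions of Theorem \ref{precompact} hold because $d-2<\alpha$ and $p=\frac{d+\alpha+2}{d}>2$. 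Thus, along any subsequence, there is a further subsequence with $v_n\to V$ strongly in $\cQ_{\mu_0}$, and $V$ is a minimizer with $\|V\|_{L^2}=M_{\rm gs}$.

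We now pass the concentration back to $u_n$. For a test function $\phi\in C_c^\infty(\R^d)$,
\[
\int\phi|u_n|^2dx=\int \phi(\rho_n y)|v_n(y)|^2dy .
\]
Since $|v_n|^2\to|V|^2$ in $L^1$ and $\phi(\rho_n\cdot)\to\phi(0)$ boundedly and pointwise, this tends to $\phi(0)\|V\|_{L^2}^2=\phi(0)M_{\rm gs}^2$. The limit is independent of the subsequence, so the full sequence $|u_n|^2$ converges to $M_{\rm gs}^2\delta_0$ in $\cD'(\R^d)$.

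The main obstacle is the non-translation-invariant compactness. In the classical proof one only gets convergence up to translations $x_n$, and concentration occurs at the limit point of $\rho_nx_n$, which must then be located. Here Theorem \ref{precompact} gives convergence with no translations at all, because the Hardy term penalizes escaping mass (Step 3 of that theorem, with $\sC_0>\sC$). This is exactly what pins the concentration point at the origin. The remaining work is to confirm that the hypotheses of Theorem \ref{precompact} hold for $v_n$ and that strong $L^2$ convergence suffices, both of which are direct.
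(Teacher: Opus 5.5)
Your proof is correct and follows the paper's argument. It uses the same $L^2$-invariant rescaling by $H_{\rm gs}/\|\sqrt{\cL_{\mu_0}}u_n\|_{L^2}$, energy conservation to show $\{v_n\}$ is minimizing, Theorem \ref{precompact} for strong convergence without translations, and a dominated-convergence test-function argument. Your version is slightly cleaner in two respects: it only needs $\|V\|_{L^2}=M_{\rm gs}$ rather than identifying the (possibly non-radial) limit as $zQ(\lambda x)$ with $|z|=\lambda=1$, and it makes explicit the subsequence argument that gives convergence of the full sequence.
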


\begin{proof}
For any $n \in \N$, set
$v_n(x)=\lambda_n^{\frac{d}{2}} u_n(\lambda_n x)$ for $x \in \R^d$,  with $\lambda_n=\frac{H_{\rm gs}}{\|\sqrt{\cL_{\mu_0}} u_n\|_{L^2}}$. 
Then $\lim _{n \rightarrow \infty} \lambda_n=0$ and
\begin{align} \label{vn-norm}
& \|v_n\|_{L^2}=\|u_n\|_{L^2}=\|u_0\|_{L^2}, \\ \label{Lvn-norm}
& \|\sqrt{\mathcal{L}_{\mu_0}} v_n\|_{L^2}=\lambda_n\|\sqrt{\mathcal{L}_{\mu_0}} u_n\|_{L^2}=H_{\rm gs}, \\ \label{Choquard-norm}
& \| (I_\alpha *|v_n|^p )|v_n|^p \|_{L^1} = \lambda_n^2 \|(I_\alpha *|u_n|^p )|u_n|^p \|_{L^1}.
\end{align}
Using the above identities and the energy conservation, we obtain
$$
\begin{aligned}
E(v_n) & =\frac{1}{2}\|\sqrt{\cL_{\mu_0}} v_n\|_{L^2}^2-\frac{d}{2(d+2+\alpha)}\| (I_\alpha *|v_n|^p)|v_n|^p\|_{L^1} \\
& =\lambda_n^2\left(\frac{1}{2}\|\sqrt{\cL_{\mu_0}} u_n\|_{L^2}^2-\frac{d}{2(d+2+\alpha)}  \|(I_\alpha *|u_n|^p )|u_n|^p \|_{L^1} \right) \\ 
& = \lambda_n^2 E(u_n)=\lambda_n^2 E(u_0) \to 0.
\end{aligned}
$$
It follows that
$$
\lim _{n \to \infty}\int_{\R^d}(I_\alpha *|v_n|^p )|v_n|^p dx =\lim _{n \to \infty}\left(\frac{d+2+\alpha}{d}\right)\|\sqrt{\cL_{\mu_0}} v_n\|^2_{L^2}=\left(\frac{d+2+\alpha}{d}\right)H_{\rm gs}^2,
$$
and hence
$$
\lim _{n \to \infty} \frac{\|\sqrt{\cL_{\mu_0}} v_n\|_{L^2}^{\frac{d}{d+2+\alpha}}\left\|v_n\right\|_{L^2}^{\frac{2+\alpha}{d+2+\alpha}}}{  \|(I_\alpha *|v_n|^p)|v_n|^p\|_{L^1}^\frac{1}{2p}}=\left(\frac{d}{d+2+\alpha}\right)^{\frac{d}{2(d+2+\alpha)}}M_{\rm gs}^{\frac{2+\alpha}{d+2+\alpha}}=\sC.
$$
It means that $\left\{v_n\right\}$ is a minimizing sequence for \eqref{eq:CHGN-def}.
By Theorem \eqref{precompact}, there exist a subsequence, still denoted by $\left\{v_n\right\}$, a ground state $Q \in \cG$ and constants $\lambda>0$, $z \in \mathbb{C}$ such that $v_n(x) \rightarrow z Q(\lambda x)$ in $\cQ_{\mu_0}$. We deduce from \eqref{vn-norm} and \eqref{Lvn-norm} that $
\lambda=|z|=1$. 
In particular, we obtain
$|v_n|^2 \to|Q|^2$  in  $L^1(\R^d)$.

It follows that
\begin{equation}\label{bp2}
\begin{aligned}
&\left|\int_{\R^d}\left|u_n(y)\right|^2 \phi(y) dy-\|Q\|_{L^2}^2 \phi(0)\right| \\ &\leq \|\phi\|_{L^{\infty}} \int_{\R^d}|| v_n(x)|^2-|Q(x)|^2|  d x 
+\int_{\R^d}|Q(x)|^2\left|\phi(\lambda_n x)-\phi(0)\right| d x .
\end{aligned}
\end{equation}
Since $|v_n|^2 \to Q^2$ in $L^1(\R^d)$, the first term on the right-hand side of \eqref{bp2} tends to zero as $n \to \infty$. Moreover, since $\lambda_n \to 0$ and $\phi \in C_c^{\infty}(\R^d)$ it follows that $\phi(\lambda_n x) \to\phi(0)$ as $n \to \infty$ for all $x \in \R^d$. By invoking the dominated convergence theorem, we derive that the second term on the right-hand side of \eqref{bp2}) tends to zero as $n \to \infty$. As a consequence, $|u_n|^2 \to M_{\rm gs}^2 \delta_0$ in $\cD'(\R^d)$.
\end{proof}

\begin{lemma} \label{lem:nabla-phase} 
Assume  $v \in \cQ_{\mu_0}$,  $\phi \in C_0^\infty(\R^d,\R)$ radial and $s \in \R$.

(i) Then we have
\begin{equation}\label{eq:uei-energy}
	E(ve^{is\phi})=E(v)+s \, \int_{\R^d}   \nabla \phi \cdot \Im  (\overline{v} \nabla v) dx+\frac{s^2}{2}\int_{\R^d} |\nabla \phi|^2 |v|^2 dx.
\end{equation}

(ii) Assume in addition that $p=\frac{d+\alpha+2}{d}$ and $\|v\|_{L^2}=M_{\rm gs}$. Then we have
\begin{equation} \label{est:im}
	\left|\int_{\R^d} \nabla \phi \cdot \Im(\bar{v} \nabla v) d x\right| \leqslant \sqrt{2 E(v)}\left(\int_{\R^d}|\nabla \phi|^2|v|^2 d x\right)^{\frac{1}{2}}.
\end{equation}
\end{lemma}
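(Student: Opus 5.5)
For part (i), the plan is to expand the kinetic part of $E$ directly; the Choquard term plays no role. Since $|ve^{is\phi}|=|v|$, we have $G(ve^{is\phi})=G(v)$, and likewise the Hardy term $\mu_0\int|x|^{-2}|ve^{is\phi}|^2dx$ does not change. Computing formally,
\[
\nabla(ve^{is\phi})=e^{is\phi}\big(\nabla v+is v\nabla\phi\big),\qquad |\nabla(ve^{is\phi})|^2=|\nabla v|^2+2s\,\nabla\phi\cdot\Im(\bar v\nabla v)+s^2|\nabla\phi|^2|v|^2 .
\]
Halving this gives \eqref{eq:uei-energy}.

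The one delicate point in (i) is that for $v\in\cQ_{\mu_0}$ the quantities $\int|\nabla v|^2$ and $\int|x|^{-2}|v|^2$ may each be infinite, so the expansion cannot simply be split term by term. To handle this:
\begin{itemize}
\item first prove the identity for $v\in C_0^\infty(\R^d\setminus\{0\})$, where the computation above is rigorous;
\item then pass to general $v\in\cQ_{\mu_0}$ using the density of $C_0^\infty(\R^d\setminus\{0\})$ in $\cQ_{\mu_0}$.
\end{itemize}
Three facts make the density step work. Multiplication by $e^{is\phi}$ is continuous on $\cQ_{\mu_0}$, since it is bounded on $C_0^\infty(\R^d\setminus\{0\})$ in the $\cQ_{\mu_0}$ norm by the formula itself. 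Also, $\nabla\phi$ is bounded and vanishes near the origin, because $\phi$ is smooth and radial, so $\nabla\phi(x)=\phi'(|x|)x/|x|$ and $\nabla\phi(0)=0$. Finally, $G$ is continuous on $\cQ_{\mu_0}$ by HLS. As for the term $\int\nabla\phi\cdot\Im(\bar v\nabla v)$, I would check that it is continuous in $v$. The cleanest route is the ground-state representation $v=|x|^{-\frac{d-2}{2}}\tilde v$. Then
\[
\Im(\bar v\nabla v)=|x|^{-(d-2)}\Im(\bar{\tilde v}\nabla\tilde v),
\]
because the real radial weight drops out of the imaginary part. This is controlled by $\|\tilde v\|_{H^1(|x|^{-(d-2)})}\simeq\|v\|_{\cQ_{\mu_0}}$ via Cauchy--Schwarz. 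This continuity is the main technical obstacle.

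For part (ii), the plan is a discriminant argument. The Hardy--Gagliardo--Nirenberg inequality \eqref{eq:CHGN-def} gives the lower bound $E(w)\ge 0$ for every $w$ of mass $M_{\rm gs}$ in the mass-critical case. Indeed, in Case 2 of the proof of part (1) of Theorem~\ref{th:global-blowup} we saw that $p\theta=1$ and $p\sC^{2p}=M_{\rm gs}^{2(p-1)}$. Hence
\[
E(w)\ge \tfrac12\|\sqrt{\cL_{\mu_0}}w\|_{L^2}^2\Big(1-(\|w\|_{L^2}/M_{\rm gs})^{2(p-1)}\Big)=0 .
\]
Now apply this to $w=ve^{is\phi}$, which has the same mass as $v$. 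Together with (i), this shows that the real quadratic polynomial
\[
s\mapsto E(v)+sA+\tfrac{s^2}{2}B,\qquad A=\int\nabla\phi\cdot\Im(\bar v\nabla v),\quad B=\int|\nabla\phi|^2|v|^2,
\]
is nonnegative on $\R$. If $B>0$, its discriminant must be nonpositive, that is $A^2\le 2E(v)B$, which is \eqref{est:im}. If $B=0$, then the expression is linear in $s$, and nonnegativity for all $s$ forces $A=0$, so the inequality holds trivially.
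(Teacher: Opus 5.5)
Your proposal is correct and follows the paper's proof. For (i), you verify the expansion for $v\in C_0^\infty(\R^d\setminus\{0\})$ and extend by density; for (ii), you use the Hardy--Gagliardo--Nirenberg inequality with $p\theta=1$ to get $E(ve^{is\phi})\ge 0$ and then apply the discriminant argument to the quadratic in $s$. Your bound $\bigl|\int\nabla\phi\cdot\Im(\bar v\nabla v)\,dx\bigr|\le\|\nabla\phi\|_{L^\infty}\|v\|_{L^2}\|\sqrt{\cL_{\mu_0}}v\|_{L^2}$, obtained from the ground-state representation, supplies the continuity step the paper leaves to the reader. Note that it needs only boundedness of $\nabla\phi$, not radiality, and it should come before you claim that multiplication by $e^{is\phi}$ is bounded on $\cQ_{\mu_0}$.
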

\begin{proof}
(i) By using \cite[identity (4.22)]{CsoGen-18}, the fact that $C_0^\infty(\R^d \setminus \{0\})$ is dense in $\cQ_{\mu_0}$ and the and the standard density argument,  we obtain identity \eqref{eq:uei-energy}. The detailed proof is left to the reader.

(ii) Since $\| ve^{is\phi}\|_{L^2} = \| v\|_{L^2} = M_{\rm gs}$, by using \eqref{eq:CHGN-def} with $p=\frac{d+\alpha+2}{d}$ and \eqref{bp1}, we derive that $E(ve^{is\phi}) \geq 0$, which implies that
$$E(v)+s \, \int_{\R^d}   \nabla \phi \cdot \Im  (\overline{v} \nabla v) dx+\frac{s^2}{2}\int_{\R^d} |\nabla \phi|^2 |v|^2 dx \geq 0, \quad \forall s \in \R.$$
This implies \eqref{est:im}.
\end{proof}

\begin{lemma}
Let $d-2<\alpha<d$ and $p=\frac{d+\alpha+2}{d}$. Assume $u$ is a solution of \eqref{eq:NLS} in $[0, T)$ such that $\|u_0\|_{L^2}=M_{\rm gs}$ and $\lim _{t \nearrow_T}\| \sqrt{\cL_{\mu_0}}u(t)\|_{L^2} = +\infty$. Then for all $t \in[0, T)$ we have $|x| u(t) \in L^2(\R^d)$ and
\begin{equation} \label{Virial-masscritical}
\Gamma_u(t) = 8 E\left(u_0\right)(T-t)^2.
\end{equation}
In particular, 
\begin{align} \label{Virial-Gamma(0)}
	\Gamma_u(0) & =\int_{\R^d}|x|^2|u_0|^2 d x=8 E(u_0) T \\ \label{Virial-Gamma'(0)}
	\Gamma_u'(0) & =4 \Im \int_{\R^d} \overline{x u_0(x)} \cdot \nabla u_0(x) d x=-16 E(u_0) T .
\end{align}
\end{lemma}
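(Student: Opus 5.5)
We follow the Merle / Hmidi--Keraani strategy: combine the concentration result of Lemma \ref{lem:massconcentration} with the phase-modulation inequality \eqref{est:im} to control a truncated virial functional, and then let the truncation go to infinity. Throughout, $p=\frac{d+\alpha+2}{d}$, so the Virial identity \eqref{exp:Virial u-dder} reduces to $\Gamma_u''(t)=16E(u_0)$.

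\textbf{Step 1: truncated virial and the key derivative bound.} Fix a radial $\phi\in C_0^\infty(\R^d)$ with $\phi(x)=|x|^2$ on $B_1$ and $|\nabla\phi|^2\le C\phi$. Set $\phi_R(x)=R^2\phi(x/R)$ and $g_R(t)=\int_{\R^d}\phi_R|u(t)|^2dx$. Using the equation in its weak form, and the fact that $\phi_R$ is smooth and bounded, one computes
\[
g_R'(t)=2\,\Im\int_{\R^d}\nabla\phi_R\cdot\bar u\nabla u\,dx .
\]
Only the Laplacian contributes here: the Hardy term and the Choquard term are real multiplications and give no contribution. Since $E(u(t))=E(u_0)$ and $\|u(t)\|_{L^2}=M_{\rm gs}$, inequality \eqref{est:im} yields
\[
|g_R'(t)|\le 2\sqrt{2E(u_0)}\Big(\int|\nabla\phi_R|^2|u|^2\Big)^{1/2}\le C\sqrt{E(u_0)}\,\sqrt{g_R(t)} .
\]
Two points need care. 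First, \eqref{est:im} is stated for radial $\phi$, but the proof only uses the phase identity, so it applies to $\phi_R$. Second, one must justify the differentiation for $u\in C([0,T);\cQ_{\mu_0})\cap C^1([0,T);\cQ_{\mu_0}^*)$, which follows by density of $C_0^\infty(\R^d\setminus\{0\})$.

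\textbf{Step 2: integrate and use concentration.} The bound gives $|\frac{d}{dt}\sqrt{g_R(t)}|\le C\sqrt{E(u_0)}$. By Lemma \ref{lem:massconcentration}, $|u(t_n)|^2\to M_{\rm gs}^2\delta_0$, so $g_R(t_n)\to M_{\rm gs}^2\phi_R(0)=0$. Integrating from $t$ to $t_n$ and letting $n\to\infty$ gives
\[
g_R(t)\le C E(u_0)(T-t)^2 ,
\]
uniformly in $R$. Letting $R\to\infty$ and using Fatou's lemma, $\Gamma_u(t)=\int|x|^2|u(t)|^2dx\le CE(u_0)(T-t)^2<\infty$. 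This proves $|x|u(t)\in L^2$ and $\Gamma_u(t)\to0$ as $t\to T$.

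\textbf{Step 3: identify the exact virial.} Since $xu_0\in L^2$, Proposition \ref{prop:Virial} applies and gives $\Gamma_u''=16E(u_0)$. Hence $\Gamma_u$ is a quadratic polynomial in $t$, nonnegative on $[0,T)$, with $\Gamma_u(t)\to0$ as $t\to T$. Moreover, by Step 2 it satisfies $\Gamma_u(t)=O((T-t)^2)$, which forces $\Gamma_u'(T^-)=0$. A quadratic with leading coefficient $8E(u_0)$ that vanishes to second order at $T$ is exactly $\Gamma_u(t)=8E(u_0)(T-t)^2$. Evaluating at $t=0$ gives \eqref{Virial-Gamma(0)}, and differentiating gives \eqref{Virial-Gamma'(0)}; the formula $\Gamma_u'=4\Im\int\bar u\,x\cdot\nabla u$ is the standard first virial derivative, the $R\to\infty$ limit of $g_R'$. (If $E(u_0)=0$, Step 2 gives $\Gamma_u\equiv0$, so $u\equiv0$, which contradicts the mass; hence $E(u_0)>0$ automatically. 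This case only needs to be noted.)

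\textbf{Main obstacle.} The delicate point is Step 1: rigorously justifying $g_R'$ and the application of \eqref{est:im} for solutions that lie only in $\cQ_{\mu_0}$, where $\nabla u$ need not belong to $L^2$ near the origin. We would handle this by noting that $\Im(\bar u\nabla u)$ is well defined via the ground state representation, using the density of $C_0^\infty(\R^d\setminus\{0\})$ together with the continuity of the flow in $\cQ_{\mu_0}$. Alternatively, one can write $\nabla\phi_R\cdot\nabla$ as a bounded form on $\cQ_{\mu_0}$, since $\phi_R$ is quadratic near the origin and $x\cdot\nabla$ commutes well with the scaling structure of $\cL_{\mu_0}$.
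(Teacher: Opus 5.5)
Your proposal is correct and is essentially the argument the paper has in mind: the paper omits the details and refers to the Hmidi--Keraani-type proof of Mukherjee--Nam--Nguyen (their Lemma 15) combined with the phase inequality of Lemma \ref{lem:nabla-phase}. That is exactly your route: the truncated virial $g_R$, the bound $|g_R'|\le C\sqrt{E(u_0)\,g_R}$, concentration at the origin, Fatou, and then the exact virial identity $\Gamma_u''=16E(u_0)$. Note that your derivation gives $\Gamma_u(0)=8E(u_0)T^2$, so the value $8E(u_0)T$ printed in the statement is a typo; the proof of Theorem \ref{thm:min-mass-sol} indeed uses $8E(u_0)T^2$.
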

\begin{proof} By employing the same argument as in the proof of \cite[Lemma 15]{MukNamNgu-2021} and using Lemma \ref{lem:nabla-phase}, we obtain the desired result. The detail proof is omitted. 
\end{proof}

 For any $T>0$, $\lambda>0$, $\gamma \in \R$ and $Q \in \cG$, define
\begin{equation} \label{S}
	\bS_{T,\lambda,\gamma}^{Q}(t,x): = e^{i \gamma}e^{i\frac{\lambda^2}{T-t}}e^{-i\frac{|x|^2}{4(T-t)}}\left( \frac{\lambda}{T-t} \right)^{\frac{d}{2}} Q\left( \frac{\lambda x}{T-t} \right) \quad x\in \R^d, t \in [0,T).
\end{equation}

\begin{proof}[\textbf{Proof of Theorem \ref{thm:min-mass-sol}}]
	
It can be checked that for any $T>0$, $\lambda>0$, $\gamma \in \R$ and $Q \in \cG$, the function $u=\bS_{T,\lambda,\gamma}^Q$ is a minimal-mass solution of \eqref{eq:NLS} which blows up in finite time $T>0$. Therefore, we conclude statement 1.
	
Next we are going to prove statement 2 by adapting the strategy of Hmidi-Keraani \cite{HmiKer-05}. 
By applying \eqref{eq:uei-energy} with $v=u_0$, $s=1/(2T)$ and  $\phi(x)$ approaching $|x|^2/2$ (using appropriate cut-off functions) in  and using the identities \eqref{Virial-Gamma(0)} and \eqref{Virial-Gamma'(0)}, we obtain 
\begin{equation*}
	\begin{aligned}
		E(u_0e^{\frac{i|x|^2}{4T}})&=E(u_0)+\frac{1}{2T} \Im \int_{\R^d} \overline{x u_0} \cdot \nabla u_0 dx+\frac{1}{8T^2}\int_{\R^d} |x|^2 |u_0|^2dx\\
		&=E(u_0)-\frac{4E(u_0)T}{2T}+\frac{1}{8T^2}(8E(u_0)T^2)=0.
	\end{aligned}
\end{equation*}
This implies that the function 
$v_0 = u_0 e^{\frac{i|x|^2}{4T}}$
satisfies that $\|v_0\|_{L^2} = M_{\rm gs}$ and $E(v_0)=0$. Hence $v_0$ is a minimizer for the Hardy-Gagliardo-Nirenberg inequality \eqref{eq:CHGN-def}. Then there exist $\lambda_1>0, \sigma \in \R$ and $Q \in \cG$ such that 
$$u_0(x) e^{\frac{i|x|^2}{4T}}=e^{i \sigma} \lambda_1^{\frac{d}{2}}Q(\lambda_1 x), \quad  x \in \R^d.$$
This yields
$$
u_0(x)=e^{i\sigma}e^{-\frac{i|x|^2}{4T}}\lambda_1^{\frac{d}{2}}Q(\lambda_1 x) \quad x \in \R^d.
$$
Setting $\lambda_0=\lambda_1 T>0$,  $\gamma_0=\sigma-\lambda_1^2T$, we arrive at
$$ u_0(x)=e^{i \gamma_0}e^{i \frac{\lambda_0^2}{T}}e^{-i\frac{|x|^2}{4T}} \left( \frac{ \lambda_0}{T}\right)^{\frac{d}{2}}Q\left(\frac{\lambda_0 x}{T}\right)=\bS_{T,\lambda_0,\gamma_0}^Q(0,x) \quad x \in \R^d.
$$ 
By the uniqueness, we conclude that
$u(t,x)=\bS_{T,\lambda_0,\gamma_0}^Q(t,x)$ for all $t \in [0,T)$. 
\end{proof}

\appendix
\section{Some auxiliary results}
\begin{lemma}\label{lem:vanishing}
Let $\left\{f_n\right\}$ be a bounded sequence in $\cQ_{\mu_0}$ and assume that $\{\left|f_n\right|^2\}$ vanishes in the sense that
$$
\limsup _{n \rightarrow \infty}\left(\sup _{y \in \mathbb{R}^d} \int_{|x-y| \leqslant R}\left|f_n(x)\right|^2 d x\right)=0 \quad \text { for any } R>0.
$$
Then for any $2 < q < 2^*$, $\|f_n\|_{L^q} \to 0$ as $n \to \infty$.
\end{lemma}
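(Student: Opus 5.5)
This is a Lions-type vanishing lemma in the space $\cQ_{\mu_0}$. The classical version works in $H^1$. Here we only know, by \eqref{eq:Q-in-Lp}, that $\cQ_{\mu_0}\hookrightarrow H^s(\R^d)$ continuously for every $s\in(0,1)$. The plan is therefore to run Lions' argument in $H^s$ for a fixed $s$ close to $1$. Choosing $s$ close to $1$ makes the fractional Sobolev exponent $2_s^*=\frac{2d}{d-2s}$ as close to $2^*$ as we like.

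Fix $q\in(2,2^*)$ and pick $s\in(0,1)$ with $q<2_s^*$. Then $\{f_n\}$ is bounded in $H^s(\R^d)$, with $\sup_n\|f_n\|_{H^s}\lesssim\sup_n\|f_n\|_{\cQ_{\mu_0}}$.

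The key step is a local Gagliardo--Nirenberg estimate on unit cubes. Cover $\R^d$ by the unit cubes $K_z=z+[0,1)^d$, $z\in\mathbb Z^d$. For the intermediate exponent I will pick $r=2+\frac{4s}{d}$, which lies in $(2,2_s^*)$. On each cube I will use the fractional Sobolev embedding $H^s(K_z)\hookrightarrow L^{2_s^*}(K_z)$ together with Hölder interpolation between $L^2$ and $L^{2_s^*}$. This gives
\[
\|f\|_{L^r(K_z)}^r\le C\,\|f\|_{L^2(K_z)}^{r-2}\,\|f\|_{H^s(K_z)}^2 ,
\]
where the constant $C$ does not depend on $z$ by translation invariance. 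The interpolation exponent of $L^{2_s^*}$ is $\lambda=\frac{d(r-2)}{2sr}$, so the $H^s$-norm appears with power $r\lambda=\frac{d(r-2)}{2s}=2$. That is exactly why $r$ was chosen this way.

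Here $\|f\|_{H^s(K_z)}^2$ means the localized Gagliardo norm $\|f\|_{L^2(K_z)}^2+\iint_{K_z\times K_z}\frac{|f(x)-f(y)|^2}{|x-y|^{d+2s}}\,dx\,dy$. Since the squares $K_z\times K_z$ are disjoint subsets of $\R^d\times\R^d$, summing over $z$ gives $\sum_z\|f\|_{H^s(K_z)}^2\le\|f\|_{H^s(\R^d)}^2$. Summing the cube estimate then yields
\[
\|f_n\|_{L^r}^r\le C\Big(\sup_{z}\|f_n\|_{L^2(K_z)}\Big)^{r-2}\|f_n\|_{H^s}^2 .
\]
Each cube $K_z$ lies in a ball of radius $\sqrt d$, so the vanishing hypothesis with $R=\sqrt d$ makes the supremum tend to $0$. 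Hence $\|f_n\|_{L^r}\to0$.

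To reach the given $q$, I will interpolate. If $2<q\le r$, interpolate between $L^2$, which is bounded, and $L^r$, which tends to $0$. If $r<q<2^*$, interpolate between $L^r$ and $L^{q'}$ for some $q'\in(q,2^*)$; the $L^{q'}$-norm is bounded because $\cQ_{\mu_0}\hookrightarrow L^{q'}$ by \eqref{eq:Q-in-Lp}. In both cases $\|f_n\|_{L^q}\to0$.

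The main obstacle is the cube-wise localization of the $H^s$ seminorm. I need both the uniform constant in the extension/Sobolev inequality on $K_z$ and the superadditivity of the localized Gagliardo seminorms. I handle the first by translation invariance and the second by disjointness of the squares $K_z\times K_z$. Otherwise the argument mirrors Lions' lemma, with $H^s$ in place of $H^1$. This detour is needed because $\|\nabla f_n\|_{L^2}$ need not be bounded in $\cQ_{\mu_0}$.
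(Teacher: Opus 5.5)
Your proof is correct and follows the same route as the paper: you embed $\cQ_{\mu_0}$ into a fractional space $H^s(\R^d)$, use a Lions-type vanishing lemma there to get $\|f_n\|_{L^r}\to 0$ for some $r>2$, and then interpolate against the uniform bounds coming from $\cQ_{\mu_0}\hookrightarrow L^{q'}$, $q'<2^*$. The only difference is that the paper takes $s=\frac12$ and cites Lenzmann--Lewin's Lemma~7.2 (stated for $d=3$) for the vanishing step, whereas your cube-wise Gagliardo--Nirenberg argument proves that step directly for all $d\ge 3$. Also, since your final interpolation already uses $\cQ_{\mu_0}\hookrightarrow L^{q'}$, any fixed $s\in(0,1)$ would suffice, so choosing $s$ close to $1$ is not needed.
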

\begin{proof}
By the continuous embedding \eqref{eq:Q-in-Lp}, we deduce that $\{f_n\}$ is a bounded sequence in $H^{s}(\R^d)$ for any $s \in (0,1)$ and in $L^r(\R^d)$ for any $r \in [2,2^*)$. In particular,  $\{f_n\}$ is a bounded sequence in $H^{\frac{1}{2}}(\R^d)$. By \cite[Lemma 7.2]{LenLew-10}, $\| f_n \|_{L^q} \to 0$ for any $q \in (2,\frac{2d}{d-1})$ (note that \cite[Lemma 7.2]{LenLew-10} is proved for $d=3$ but by the same argument can be employed to obtain the result for $d \geq 3$). By interpolation and the fact that $\{f_n\}$ is bounded in  $L^r(\R^d)$ for any  $2 \leq r < 2^*$, we derive that $\| f_n \|_{L^q} \to 0$ as $n \to \infty$  for any  $2 < r < 2^*$ .
\end{proof}

\begin{lemma} \label{lem:extract}
Let $\{u_n\}$ be a sequence in $\cQ_{\mu_0}$ such that $u_n \wto u$ weakly in $\cQ_{\mu_0}$ and let $0 \leq R_k \leq R_k'$ such that $R_k \to \infty$. Then there exists a subsequence $\{u_{n_k}\}$ such that
\begin{equation} \label{extract-1}
\int_{B_{R_k}}|u_{n_k}|^2 dx \to \int_{\R^d}|u|^2 dx \quad \text{and} \quad \int_{A_{R_k,R_k'}}|u_{n_k}|^2 dx \to 0,
\end{equation}
as $k \to \infty$.  In particular, $1_{B_{R_{n_k}}(0)} \to u$ strongly in $L^q(\R^d)$ for any $2 \leq q < 2^*$.
\end{lemma}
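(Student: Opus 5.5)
The plan is a diagonal argument built on two facts: local $L^2$ compactness of bounded sequences in $\cQ_{\mu_0}$, and smallness of $\|u\|_{L^2}$ on annuli far from the origin.

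\textbf{Step 1: local strong convergence.} By \eqref{eq:Q-in-Lp}, the sequence $\{u_n\}$ is bounded in $H^s(\R^d)$ for every $s\in(0,1)$. Rellich's theorem on bounded sets then gives, for each fixed $R>0$,
$$\1_{B_R}u_n \to \1_{B_R}u \quad \text{in } L^2(\R^d) \text{ as } n\to\infty.$$
Hence, for each $k$, both $\int_{B_{R_k}}|u_n|^2dx \to \int_{B_{R_k}}|u|^2dx$ and $\int_{B_{R_k'}}|u_n|^2dx \to \int_{B_{R_k'}}|u|^2dx$ as $n\to\infty$. Consequently
$$\int_{A_{R_k,R_k'}}|u_n|^2dx \to \int_{A_{R_k,R_k'}}|u|^2dx \quad (n\to\infty).$$
Since $R_k\to\infty$ and $u\in L^2$, the right-hand side is at most $\int_{B_{R_k}^c}|u|^2dx\to0$. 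Likewise $\int_{B_{R_k}}|u|^2dx\to\|u\|_{L^2}^2$.

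\textbf{Step 2: diagonal choice.} For each $k$, I pick $n_k>n_{k-1}$ so large that
$$\Big|\int_{B_{R_k}}|u_{n_k}|^2-\int_{B_{R_k}}|u|^2\Big|<\tfrac1k \quad\text{and}\quad \Big|\int_{B_{R_k'}}|u_{n_k}|^2-\int_{B_{R_k'}}|u|^2\Big|<\tfrac1k.$$
Subtracting the two bounds gives $\int_{A_{R_k,R_k'}}|u_{n_k}|^2 \le \int_{B_{R_k}^c}|u|^2+\tfrac2k \to 0$, and the first bound together with Step 1 gives $\int_{B_{R_k}}|u_{n_k}|^2\to\|u\|_{L^2}^2$. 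This proves \eqref{extract-1}. The only subtle point is that the radii depend on $k$, which is why the index $n_k$ must be chosen depending on $R_k'$ as well as on $R_k$.

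\textbf{Step 3: the "in particular" statement.} I read it as $\1_{B_{R_k}}u_{n_k}\to u$ in $L^q$. For $q=2$, expand
$$\|\1_{B_{R_k}}u_{n_k}-u\|_{L^2}^2=\int_{B_{R_k}}|u_{n_k}|^2-2\Re\int_{B_{R_k}}\bar u\,u_{n_k}+\|u\|_{L^2}^2 .$$
The first term converges to $\|u\|_{L^2}^2$ by \eqref{extract-1}. For the middle term, split $\int_{B_{R_k}}\bar u\,u_{n_k}=\int\bar u\,u_{n_k}-\int_{B_{R_k}^c}\bar u\,u_{n_k}$. The first piece tends to $\|u\|_{L^2}^2$ by weak convergence $u_{n_k}\wto u$ in $L^2$. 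The second is bounded by $\|u\|_{L^2(B_{R_k}^c)}\sup_n\|u_n\|_{L^2}\to0$. Hence the norm tends to $0$.

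For $2<q<2^*$, I interpolate between $L^2$ and some $L^r$ with $q<r<2^*$. This works because $\1_{B_{R_k}}u_{n_k}-u$ is uniformly bounded in $L^r$ by \eqref{eq:Q-in-Lp}.

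The main obstacle is only the bookkeeping in the diagonal selection; no genuine difficulty from the Hardy potential arises, since everything passes through the embedding \eqref{eq:Q-in-Lp}.
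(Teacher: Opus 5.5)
Your proof is correct and follows essentially the argument the paper relies on: the paper just cites the diagonal-extraction proof of Lewin's Lemma 14 together with the embedding \eqref{eq:Q-in-Lp}, which amounts to local $L^2$ compactness on balls followed by a choice of $n_k$ depending on $R_k'$, as in your Steps 1--2. Your reading of the garbled ``in particular'' clause as $\1_{B_{R_k}}u_{n_k}\to u$ in $L^q$ is the right one, and your $L^2$ expansion followed by interpolation handles it correctly.
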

\begin{proof}
By using the argument in the proof of \cite[Lemma 14]{Lew-10}, together with the embedding \eqref{eq:Q-in-Lp}, we obtain the desired result.
\end{proof}

\bibliographystyle{siam}

\end{document}